\numberwithin{equation}{section}
\newtheorem{thm}{Theorem}[section]
\newtheorem{lm}[thm]{Lemma}
\newtheorem{cor}[thm]{Corollary}
\newtheorem{pr}[thm]{Proposition}
\theoremstyle{definition}
\newtheorem{df}[thm]{Definition}
\theoremstyle{definition}
\newtheorem{rems}[thm]{Remarks}
\newtheorem{ex}[thm]{Example}
\newcommand{\Rn}{\mathbb{R}^{n}}
\newcommand{\Pn}{\mathbb{P}^{n}}
\newcommand{\p}{\mathbb{P}}
\newcommand{\hn}{\mathbb{R}^{n-1}}
\newcommand{\R}{\mathbb{R}}
\renewcommand{\hn}{\mathbb{H}^n}
\renewcommand{\H}{\mathcal{H}}
\newcommand{\h}{\mathbb{H}}
\newcommand {\grtrsim} {\ {\raise-.5ex\hbox{$\buildrel>\over\sim$}}\ }
\newcommand{\e}{\varepsilon}
\newcommand{\khii}{\text{\lower -.4ex\hbox{$\chi$}}}
\DeclareMathOperator{\spt}{spt}
\DeclareMathOperator{\tanm}{Tan}
\renewcommand{\a}{\alpha}
\newcommand{\restrict}{\begin{picture}(12,12)
                       \put(2,0){\line(1,0){8}}
                       \put(2,0){\line(0,1){8}}
                      \end{picture}}
\begin{document} 
\title {Parabolic rectifiability, tangent planes and tangent measures}
\author{Pertti Mattila}

 \subjclass[2000]{Primary 28A75} \keywords{Parabolic space, rectifiable set, $C^1$ graph, Lipschitz graph, tangent measure, Hausdorff measure}

\begin{abstract} 
We define rectifiability in $\R^{n}\times\R$ with a parabolic metric in terms of $C^1$ graphs and Lipschitz graphs with small Lipschitz constants and we characterize it in terms of approximate tangent planes and tangent measures. We also discuss  relations between the parabolic rectifiability and other notions of rectifiability.
\end{abstract}

\maketitle
{\small\tableofcontents}
\section{Introduction}
Let $\|\cdot\|$ be the parabolic 'norm'
$$\|(x,t)\|=\sqrt{|x|^2+|t|}$$
in $\Pn=\Rn\times\R$ and $d$ the corresponding metric $d(p,q)=\|p-q\|$. Here, and later, $|x|$ is the Euclidean norm of $x$. The following is the main result of the paper:

\begin{thm}\label{main}
Let $m$ and $n$ be positive integers with $0<m<n+2$. Let $E\subset \Pn$ be $\H^m$ measurable and $\mathcal H^m(E)<\infty$. Then the following are equivalent:
\begin{itemize}
\item[(1)] $E$ is $C^1G$ $m$-rectifiable.
\item[(2)] $E$ is LG $m$-rectifiable.
\item[(3)] $E$ has an approximate tangent $m$-plane at $\H^m$ almost all of its points.
\item[(4)] For $\H^m$ almost all $a\in E$\ there is an $m$-flat measure $\lambda_a$ such that\\ $\tanm(\H^m\restrict E,a)=\{c\lambda_a: 0<c<\infty\}$.
\item[(5)] For $\H^m$ almost all $a\in E$\ $\H^m\restrict E$ has a unique tangent measure at $a$.
\end{itemize}
\end{thm}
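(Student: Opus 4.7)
The plan is to establish the equivalences via two cycles sharing the core geometric triangle: first close $(1)\Rightarrow(2)\Rightarrow(3)\Rightarrow(1)$ through a parabolic Marstrand--Mattila-type step, then treat the measure-theoretic conditions by proving $(3)\Leftrightarrow(4)$ and $(1)\Rightarrow(5)\Rightarrow(4)$.

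The easy steps are $(1)\Rightarrow(2)$ and $(2)\Rightarrow(3)$. For the first, a $C^1G$ piece has continuously varying tangent plane, so a Lusin/Egorov decomposition in local parabolic coordinates yields countably many subsets that are Lipschitz parabolic graphs with arbitrarily small Lipschitz constants. For the second, on a small-constant Lipschitz parabolic graph the graphing plane itself serves as an approximate tangent $m$-plane at every point, the smallness of the Lipschitz constant giving the required first-order approximation in parabolic balls.

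On the measure side, $(3)\Rightarrow(4)$ comes from noting that the approximate tangent plane $V_a$ forces any weak-$*$ subsequential limit of the blow-ups of $\H^m\restrict E$ at $a$ to be concentrated on $V_a$; together with the finite positive upper $m$-density of $\H^m\restrict E$ at $\H^m$-a.e.\ $a\in E$, and the fact that a measure on an $m$-plane with finite positive $m$-density is a multiple of $\H^m$ on that plane, this identifies the tangent measures as exactly $\{c\lambda_a:0<c<\infty\}$ with $\lambda_a=\H^m\restrict V_a$. The converse $(4)\Rightarrow(3)$ reads $V_a=\spt\lambda_a$ and converts weak-$*$ convergence back into approximate flatness. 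For $(1)\Rightarrow(5)$, a $C^1G$ piece has $m$-density equal to $1$ at $\H^m$-a.e.\ point, so the blow-ups converge (not merely subsequentially) to $\H^m\restrict V_a$. For $(5)\Rightarrow(4)$, uniqueness forces the tangent measure to be invariant under further blow-ups, hence conical, and combined with the finite upper density bound this constrains it to be $m$-flat.

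The main obstacle is $(3)\Rightarrow(1)$, a parabolic Marstrand--Mattila theorem. The strategy is to apply Egorov to obtain compact subsets $F\subset E$ on which the tangent plane varies continuously, the flatness rate is uniform, and the $\H^m$-density of $F$ at its points is close to $1$; then a cone-intersection lemma adapted to the parabolic metric shows that, over any fixed reference $m$-plane $V_0$ sufficiently compatible with $F$, the set $F$ is contained in the graph of a function with small Lipschitz constant. A Whitney extension or mollification step then upgrades this Lipschitz graph to a $C^1$ graph covering $F$ up to an $\H^m$-null set, and the countable union of such pieces exhausts $E$. The chief technical difficulty is the anisotropic parabolic scaling: the admissible $m$-planes in $\Pn$ are constrained by their compatibility with the time axis (spatial variables scale linearly while time scales as a square), so the cone-intersection argument, the graph construction, and the mollification must all be engineered to respect this anisotropy.
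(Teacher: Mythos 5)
Your easy implications are fine, but the two load-bearing steps of your plan have genuine gaps. For $(3)\Rightarrow(1)$ you propose Egorov plus a cone-intersection lemma to trap a compact $F\subset E$ inside a small-constant Lipschitz graph, invoking "$\H^m$-density of $F$ at its points close to $1$". Condition (3) gives no such thing: a priori you only have the upper-density bounds of Theorem \ref{dens}, and positive lower density (let alone density near $1$) is a \emph{consequence} of rectifiability (Theorem \ref{densthm}), not an available hypothesis — so your argument is circular at this point. Without a lower density bound at points of $F$, stray points of $F$ outside the cones (carrying negligible measure but ruining graph containment) cannot be excluded, and the cone-intersection step fails. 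This is exactly why the paper (following the Euclidean template) routes the argument through pure unrectifiability: Lemma \ref{M2lemma} shows that a purely $(m,L)$-unrectifiable set satisfying the cone-measure smallness \eqref{M2eq2} has density below $2^{-m}$ everywhere, hence $\H^m$ measure zero, which gives $(3)\Rightarrow(2)$ (Theorem \ref{mLthm}(2)) without any lower-density input.

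The second gap is your upgrade from Lipschitz to $C^1$ "by Whitney extension or mollification". In the parabolic setting this cannot work for a single Lipschitz graph: Example \ref{ex} is a parabolic Lipschitz graph that is not parabolic rectifiable and in particular not a.e.\ parabolically differentiable, so no extension or smoothing argument applied to one graph can produce $C^1$ pieces. The passage $(2)\Rightarrow(1)$ genuinely needs the full LG hypothesis — the covering by graphs with \emph{arbitrarily small} constants — and this is the content of the Rademacher-type Theorem \ref{diffthm}, whose proof (comparing the given graph with the small-constant graphs via the projections $P_V|W$ and an approximate-differentiability argument) is a substantial piece of the paper. Two further steps are asserted on false grounds: a measure on an $m$-plane with positive finite $m$-density need \emph{not} be a constant multiple of $\H^m$ (any density bounded between two positive constants is a counterexample); the paper instead uses Theorem \ref{densthm} to make the tangent measures $m$-uniform, Lemma \ref{tangraph} to force $\spt\nu=V_a$, and the uniqueness theorem for uniform measures. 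Likewise "(5) $\Rightarrow$ unique tangent measure is conical, hence with a density bound flat" is not a proof — $\H^m$ restricted to a union of two transversal $m$-planes is conical with the right density bounds but not flat; the implication $(5)\Rightarrow(4)$ is the nontrivial Theorem \ref{M2}, which the paper imports from \cite{Mat05} rather than re-deriving.
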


\begin{df}
We shall say that $E$ is \emph{parabolic $m$-rectifiable} if one, and hence all, of the conditions (1) to (5) holds.
\end{df}

This requires some explanations, the precise definitions will be given later. First, $\H^m$ is the $m$-dimensional Hausdorff measure related to the metric $d$. Notice that the Hausdorff dimension of $\Pn$ is $n+2$. $C^1G$ refers to parabolic  $C^1$ graphs and LG to Lipschitz graphs and $E$ is defined to be $C^1G$, respectively LG, $m$-rectifiable if $\H^m$ almost all of it can be covered with countably many $C^1$ graphs, respectively Lipschitz graphs with arbitrarily small Lipschitz constants, over subsets of homogeneous linear subspaces of Hausdorff dimension $m$. In Example \ref{ex} we shall show that we need small constants; Lipschitz graphs themselves are not always parabolic rectifiable. Homogeneous means that the linear subspaces are invariant under the dilations 
$$\delta_r:\Pn\to\Pn, \delta_r(x,t)=(rx,r^2t), r>0.$$ Then they are precisely the linear $m$-dimensional subspaces of $\Rn\times\{0\}$ and linear $(m-1)$-dimensional subspaces of $\R^{n+1}$ containing $\{0\}\times\R$. A homogeneous linear subspace $V$ is an approximate tangent $m$-plane of $E$ at $a$ if near $a$\ $E$ lies close to $V+a$ in a metric measure-theoretic sense. $m$-flat measures are just constant multiples of the Lebesgue measures on homogeneous linear subspaces. The elements of $\tanm(\H^m\restrict E,a)$ are the tangent measures at $a$ of the restriction of $\H^m$ to $E$,\ $\H^m\restrict E$, that is, the weak limits of the normalized blow-ups of $\H^m\restrict E$. Finally, the uniqueness of the tangent measures means uniqueness up to multiplication by positive constants. 

Rectifiable sets were defined in Euclidean spaces in the plane by Besicovitch in the 1920s and in general dimensions by Federer in 1947 in \cite{Fed47}.  According to Federer's definition $E\subset\Rn$ is $m$-rectifiable if almost all of it can be covered with countably many Lipschitz images of subsets of $\R^m$. It follows from Rademacher's theorem that in Euclidean spaces this is equivalent to (1) and (2). Example \ref{ex1} shows that Euclidean $m$-rectifiable sets need not be parabolic  $m$-rectifiable, but parabolic  $m$-rectifiable sets are Euclidean $m$-rectifiable by Theorem \ref{eucpr}. Moreover, Euclidean $m$-rectifiable sets are parabolic  $(m+1)$-rectifiable by Theorem \ref{eucpr1}. In Euclidean spaces the equivalence of (1), (2) and (3) was proved by Federer and of (4) to these by Preiss, who introduced the tangent measures in \cite{Pre87}. The equivalence of (4) and (5) was proved in \cite{Mat05} in metric groups in a much more general setting. Ambrosio and Kirchheim \cite{AK00} used Federer's definition in general metric spaces and they proved an analogue of $(2)\Longleftrightarrow (3)$. 

Rectifiability in Heisenberg groups, and in more general Carnot groups, has been studied by many people starting with the pioneering work of Franchi, Serapioni and Serra Cassano \cite{FSS01}. An analogue of Theorem \ref{main} in Heisenberg groups was proved in \cite{MSS10} and in general Carnot groups  by Antonelli and Merlo in \cite{AM21}. Our parabolic space is a special case of a homogeneous group, which is not a Carnot group. Idu, Magnani and Maiale \cite{IMM20} proved in general homogeneous groups a result similar to Theorem \ref{main}, but only for horizontal sets. 

Although rectifiability can be  defined in general metric spaces by Federer's definition, it is evident that it is not a natural definition in spaces with a special very non-Euclidean structure, such as Heisenberg groups and $\Pn$. In particular, we shall see in Theorem \ref{Lip0} that in $\Pn$ all Federer  $(n+1)$-rectifiable sets have Hausdorff $n+1$ measure zero. It is not always clear what a natural definition should be. One criterion could be the condition (5) in Theorem \ref{main}: rectifiable sets should look the same at all small scales around typical points, whenever such a statement makes sense. So if we wish to have an optimally large collection with this property, we have a right definition. 

As far as I know, except for \cite{IMM20} mentioned above, non-quantitative rectifiability in the parabolic setting has not been considered before in the literature. But uniform rectifiability in codimension 1 was introduced by Hoffman, Lewis and Nystr\"om in \cite{HLN03} and  \cite{HLN04}. Recently it has been investigated  by many people, see, for example, \cite{BHHLN20} and \cite{BHHLN21}. Relations between Euclidean uniform rectifiability and boundary behaviour of solutions of elliptic equations and related singular integrals have been investigated by great success during the recent decades. A motivation for parabolic uniform rectifiability comes, I believe, from the desire to extend results of this type to parabolic equations. 

After some preliminaries in Section \ref{Preli}  in Section \ref{Lipgraph} we characterize parabolic Lipschitz graphs with parabolic cones and relate them to rectifiability. In Section \ref{APPtan} we introduce approximate tangent planes and tangent measures and finish the proof of the equivalence of (2), (3), (4) and (5) in Theorem \ref{main}. The arguments follow the Euclidean pattern with several changes. In Section \ref{Paradiff} we introduce parabolic $C^1$ graphs and complete the proof of Theorem \ref{main}. There we also prove a Rademacher-type theorem for parabolic rectifiable Lipschitz graphs, Theorem \ref{diffthm}. In Section \ref{euclid} we compare Euclidean and parabolic rectifiability. We show that parabolic $m$-rectifiability implies Euclidean $m$-rectifiability and Euclidean $m$-rectifiability implies parabolic $(m+1)$-rectifiability. The converse statements are false by examples in Section \ref{example}. In Section \ref{Other} we discuss uniform, Ambrosio-Kirchheim and Heisenberg rectifiability and their relations to parabolic rectifiability. In Theorem \ref{Lip0} we show that all codimension 1 Ambrosio-Kirchheim rectifiable sets have parabolic Hausdorff measure zero. In Section \ref{example} we construct several examples. Example \ref{ex} shows that the vertical  Lipschitz graphs  need not be parabolic rectifiable, in particular, Lipschitz does not imply almost everywhere differentiable in this setting.. In addition,  this gives a simple example of a vertical Lipschitz graph which is not uniformly rectifiable, an earlier such an example is due to Lewis and Silver in \cite{LS88}. A modification 
of this, Example \ref{ex4}, shows that the qualitative notion of rectifiability related to the above mentioned uniform rectifiability differs from our parabolic rectifiability. 

Example \ref{ex} means a failure of a strong type of Rademacher's theorem. Similar failure in some Carnot groups for intrinsic Lipschitz graphs was verified in \cite{JNV21}, see Section \ref{Heisenberg}.

\emph{Acknowledgement.} I would like to thank referee for many useful comments.

\section{Preliminaries}\label{Preli}

The metric $d$ and the dilations $\delta_r$ in $\Pn$ are as defined in Introduction. Then $d(\delta_rp,\delta_rq)=rd(p,q)$. In $\Pn$,\ $d(A)$ stands for the diameter of $A$,\ $d(A,B)$ for the minimal distance between the sets $A$ and $B$ and $d(p,A)$ for the distance from a point $p$ to a set $A$. The closed ball with centre $p\in \Pn$ and radius $r>0$ is denoted by $B(p,r)$ and the open ball by $U(p,r)$. The corresponding concepts with the Euclidean distance are denoted by $d_E$ and $B_E$. 

We denote by $\mathcal L^n$ the Lebesgue measure in the Euclidean $n$-space $\Rn, n\geq 1.$ For $s>0$ the $s$-dimensional Hausdorff measure $\H^s$ in $\Pn$  is defined by

$$\H^s(A)=\lim_{\delta\to 0}\inf\{\sum_{i=1}^{\infty}d(E_i)^s: A\subset \bigcup_{i=1}^{\infty}E_i, d(E_i)<\delta\}.$$

Let $\H^s_E$ be the Euclidean Hausdorff measure defined as $\H^s$ but the parabolic diameter replaced by the Euclidean one.

\begin{lm}\label{Hs} There is positive constant $c(n,s)$ such that $c(n,s)\H^{s+1}(A)\leq \H^s_E(A)\leq \H^s(A)$ for $A\subset\Pn$.
\end{lm}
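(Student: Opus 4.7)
The plan is to prove both inequalities by transferring a cover in one metric to a cover in the other and comparing the resulting sums. The key elementary fact I will use throughout is that for points $p=(x,t),q=(y,s)$ with $|t-s|\le 1$ we have $(t-s)^2\le |t-s|$, so that the parabolic distance dominates the Euclidean one at small scales, while the asymmetry $\sqrt{|t-s|}$ versus $|t-s|$ is what produces the shift from dimension $s$ to dimension $s+1$ in the other direction.

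For the upper bound $\H^s_E(A)\le \H^s(A)$, I would take an arbitrary parabolic $\delta$-cover $\{E_i\}$ of $A$ with $\delta\le 1$. For any two points $p,q\in E_i$ the inequality above gives
$$d_E(p,q)^2=|x-y|^2+(t-s)^2\le |x-y|^2+|t-s|=d(p,q)^2,$$
so $d_E(E_i)\le d(E_i)$ and consequently $\sum_i d_E(E_i)^s\le \sum_i d(E_i)^s$. Taking the infimum over parabolic $\delta$-covers and letting $\delta\to 0$ concludes this direction.

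For the lower bound $c(n,s)\H^{s+1}(A)\le \H^s_E(A)$, I would start from any Euclidean $\delta$-cover $\{E_i\}$ with $\delta\le 1$ and refine it in the time direction. Writing $r_i:=d_E(E_i)\le 1$, the set $E_i$ has both spatial extent and time extent at most $r_i$. I slice $E_i$ into pieces $E_{i,k}$ by cutting its time projection into intervals of length $r_i^2$; this requires at most $\lceil 1/r_i\rceil\le 2/r_i$ pieces, each with spatial diameter $\le r_i$ and time extent $\le r_i^2$, hence parabolic diameter at most $\sqrt{2}\,r_i$. Summing,
$$\sum_{i,k}d(E_{i,k})^{s+1}\le \sum_i \frac{2}{r_i}\bigl(\sqrt{2}\,r_i\bigr)^{s+1}=2^{(s+3)/2}\sum_i r_i^{s}=2^{(s+3)/2}\sum_i d_E(E_i)^s.$$
The collection $\{E_{i,k}\}$ is a parabolic $(\sqrt{2}\,\delta)$-cover of $A$, so letting $\delta\to 0$ yields $\H^{s+1}(A)\le 2^{(s+3)/2}\H^s_E(A)$, i.e.\ the claim with $c(n,s)=2^{-(s+3)/2}$ (independent of $n$).

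The only point that needs any attention, rather than being a real obstacle, is matching the cover scales: the slicing must produce parabolic diameters that tend to zero with $\delta$, so that the inequalities for the $\delta$-level contents pass to the genuine Hausdorff measures. This is automatic here because each slice has parabolic diameter at most $\sqrt{2}\,d_E(E_i)<\sqrt{2}\,\delta$.
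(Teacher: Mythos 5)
Your proof is correct and follows essentially the same route as the paper: the right-hand inequality comes from the fact that the Euclidean diameter is dominated by the parabolic one at small scales, and the left-hand inequality from covering each Euclidean set of size $r$ by roughly $1/r$ pieces of parabolic diameter comparable to $r$ (your time-slicing is exactly the paper's ``a Euclidean ball of radius $r$ is covered by about $1/r$ parabolic balls of radius $r$'', carried out with explicit constants). The only cosmetic point is to note that degenerate cover elements with $d_E(E_i)=0$ (single points) should be handled separately, which is trivial.
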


\begin{proof}
The right hand inequality is trivial, because the Euclidean diameter is at most the parabolic for small sets. The left hand inequality follows from the observation that any Euclidean ball of radius $r$ can be covered roughly with $1/r$ parabolic balls of radius $r$.
\end{proof}

A set $A\subset\Pn$ is called AD-$m$-\emph{regular} if there are $0<c<C<\infty$ such that $cr^m\leq \H^m(A\cap B(p,r))\leq Cr^m$ for $p\in A$ and $0<r<d(A)$.

The \emph{upper and lower $s$-densities} of $A\subset \Pn$ at $a\in\Pn$ are defined by

$$\Theta^{\ast s}(A,p)=\limsup_{r\to 0}(2r)^{-s}\mathcal H^s(A\cap B(p,r)),$$ 
$$\Theta_{\ast}^s(A,p)=\liminf_{r\to 0}(2r)^{-s}\mathcal H^s(A\cap B(p,r)).$$ 
If these agree, we define the \emph{$s$-density} $\Theta^{s}(A,p)$ as their common value.

\begin{thm}\label{dens} If $A\subset\Pn$ is $\H^s$ measurable and $\mathcal H^s(A)<\infty$, then
\begin{itemize}
\item[(1)] $2^{-s}\leq\Theta^{\ast s}(A,p)\leq 1$ for $\mathcal H^s$ almost all $p\in A,$
\item[(2)] $\Theta^{\ast s}(A,p)=0$ for $\mathcal H^s$ almost all $p\in \Pn\setminus A.$\end{itemize}
\end{thm}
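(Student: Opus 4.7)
My plan is to adapt Mattila's standard Euclidean density-theorem template to the parabolic metric $d$. The one ingredient beyond bookkeeping that I need is a Vitali-type covering theorem for $\H^s$ in $(\Pn, d)$; since $\Pn$ is boundedly compact and parabolic balls are closed, Federer's general Vitali theorem (or a direct $5r$-trick) supplies it. Borel regularity of $\H^s$, automatic because $\H^s$ is a metric outer measure, will also be used throughout.

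For the upper bound $\Theta^{*s}(A,p) \leq 1$ in (1), I fix $t > 1$ and set $B_t = \{p \in A : \Theta^{*s}(A,p) > t\}$, aiming to show $\H^s(B_t) = 0$. Given $\varepsilon, \delta > 0$, regularity yields an open $U \supset B_t$ with $\H^s(A \cap U) < \H^s(B_t) + \varepsilon$. At every point of $B_t$ there are arbitrarily small balls $B(p, r) \subset U$, $r < \delta/2$, with $\H^s(A \cap B(p, r)) > t(2r)^s$; Vitali extracts a disjoint subfamily $\{B(p_i, r_i)\}$ covering $B_t$ up to an $\H^s$-null set. Since $d(B(p_i, r_i)) \leq 2r_i$,
\[
\H^s_\delta(B_t) \leq \sum_i (2r_i)^s < \frac{1}{t}\sum_i \H^s(A \cap B(p_i, r_i)) \leq \frac{1}{t}\H^s(A \cap U) < \frac{\H^s(B_t) + \varepsilon}{t}.
\]
Letting $\delta \to 0$ and then $\varepsilon \to 0$ gives $(1 - 1/t)\H^s(B_t) \leq 0$, hence $\H^s(B_t) = 0$.

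For the lower bound $\Theta^{*s}(A,p) \geq 2^{-s}$, fix $t < 2^{-s}$ and let $C_{t,n}$ be the set of $p \in A$ with $\H^s(A \cap B(p,r)) < t(2r)^s$ for every $r < 1/n$. For any cover $\{E_i\}$ of $C_{t,n}$ by sets with $d(E_i) < \delta < 1/n$, pick $p_i \in C_{t,n} \cap E_i$; then $E_i \subset B(p_i, d(E_i))$, so $\H^s(A \cap E_i) < 2^s t\, d(E_i)^s$. Summing and infimising over covers yields $\H^s(C_{t,n}) \leq 2^s t\, \H^s_\delta(C_{t,n})$, which becomes $\H^s(C_{t,n}) \leq 2^s t\, \H^s(C_{t,n})$ as $\delta \to 0$. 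Since $2^s t < 1$ and $\H^s(C_{t,n}) \leq \H^s(A) < \infty$, this forces $\H^s(C_{t,n}) = 0$; a union over $n \in \N$ and rationals $t \uparrow 2^{-s}$ completes (1).

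For (2), pick by regularity a closed $F \subset A$ with $\H^s(A \setminus F) < \varepsilon$. Any $p \in \Pn \setminus F$ has a small ball missing $F$, so $\Theta^{*s}(A, p) = \Theta^{*s}(A \setminus F, p)$ there. For each $t > 0$, the analogous Vitali extraction applied to $A \setminus F$ in place of $A$ bounds $\H^s(\{p \in \Pn \setminus F : \Theta^{*s}(A \setminus F, p) > t\}) \leq \H^s(A \setminus F)/t < \varepsilon/t$. Sending $\varepsilon \to 0$ for each fixed $t = 1/k$ and taking a union over $k \in \N$ gives $\Theta^{*s}(A, p) = 0$ for $\H^s$-a.e.\ $p \in \Pn \setminus A$. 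The main technical nuisance is justifying the Vitali extraction in $(\Pn, d)$, but since $d$ induces the Euclidean topology and closed parabolic balls are closed Euclidean sets, the standard $5r$-argument goes through without requiring $\H^s$ to be doubling.
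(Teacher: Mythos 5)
Your proof is correct, and it is exactly the standard argument the paper invokes: the paper gives no written proof but cites Federer 2.10.19 and notes that the Euclidean density proofs (as in \cite{EG92} or \cite{Mat95}) carry over to general metric spaces, which is precisely what you have carried out, with the only metric-specific ingredient being the $5r$-based Vitali theorem for $\H^s$, correctly noted to need no doubling. No gaps worth flagging.
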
 

For the proof, see \cite{Fed69}, 2.10.19(2),(4),(5), also the proofs of \cite{EG92}, Section 2.3, work in general metric spaces.

For $0<m\leq n$ let $H(n,m)$ be the set of linear $m$-dimensional subspaces of $\Rn\times\{0\}\subset\Pn$, the horizontal subspaces. For $1<m\leq n+1$ let $V(n,m)$ be the set of linear $(m-1)$-dimensional subspaces of $\Pn$ containing $\{0\}\times\R$, the vertical subspaces. Then $V(n,m)$ consists of the orthogonal complements in $\R^{n+1}$ of the horizontal subspaces in $H(n,n+2-m)$. We also set $P(n,m)=H(n,m)\cup V(n,m)$. Then $P(n,1)=H(n,1)$ and $P(n,n+1)=V(n,n+1)$. For $V\in P(n,m), V^{\perp}$ is the orthogonal complement of $V$ in $\R^{n+1}$.

Denote by $H$ the horizontal plane $\{t=0\}$ and by $T$ the $t$-axis $\{x=0\}$. Then any vertical plane $V$ can be written as $V=V\cap H + T$. For $V\in P(n,m)$ let $P_V$ be the orthogonal projection onto $V$, that is, $P_V(x,t)=(P_{V}(x),0)$, when $V\in H(n,m)$, and $P_V(x,t)=(P_{V\cap H}(x),t)$, when $V\in V(n,m)$, where $P_{V}$ and $P_{V\cap H}$ also denote the standard orthogonal projections in $\Rn$. Then 
\begin{equation}\label{norm}
\|p\|^2=\|P_V(p)\|^2+\|P_{V^\perp}(p)\|^2.
\end{equation}

All these projections $P_V:\Pn\to\Pn$ are 1-Lipschitz mappings. We equip $P(n,m)$ with the compact metric $d$,\ $d(V,W)=\|P_{V\cap H}-P_{W\cap H}\|$, where $\|\cdot\|$ is the operator norm: for any linear map $\Lambda:\Pn\to\Pn, \|\Lambda\|$ is the smallest number $L$ such that $\|\Lambda(p)\|\leq L\|p\|$ for $p\in \Pn$.

The results of this paper remain valid with small technical changes for other commonly used metrics, too, for example the ones induced by $\|(x,t)\|=(|x|^4+|t|^2)^{1/4}, \|(x,t)\|=|x|+\sqrt{|t|}$ and $\|(x,t)\|=\max\{|x|,\sqrt{|t|}\}$, but the formula \eqref{norm} makes $d$ more convenient. In particular, the validity of the statements of Theorem \ref{main} is independent of the metrics as long as they are bilipschitz equivalent.

We say that a subgroup of $(\Pn,+)$ is \emph{homogeneous} if it is closed and invariant under the dilations $\delta_r, r>0$. It is easy to check that the homogeneous subgroups are exactly the elements of $P(n,m)$ together with $\Pn$ and $\{0\}$. 

\begin{lm}\label{LebH}
For $V\in P(n,m)$ there is a positive number $p(V)$ such that $\H^m\restrict V=\frac{p(V)}{\mathcal L_V(V\cap B(0,1))}\mathcal L_V$, where $\mathcal L_V$ is the Lebesgue measure on $V$. In particular, $\H^m(V\cap B(p,r))=p(V)r^m$ for $p\in V, r>0$. Moreover,  $p(V)=2^m$  for $V\in H(n,m)$ and for $2\leq m\leq n+1$ there is a positive number $v(m), 1\leq v(m)\leq 2^m,$ such that $p(V)=v(m)$  for $V\in V(n,m)$.
\end{lm}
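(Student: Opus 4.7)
The plan is to combine translation invariance of $d$ with the $\delta_r$-invariance of $V$ to pin down $\H^m\restrict V$ up to a single positive scalar, and then to evaluate that scalar in each of the two cases. For every $V\in P(n,m)$, translation by $w\in V$ is a parabolic isometry of $\Pn$, so $\H^m\restrict V$ is a translation invariant Borel regular measure on the vector space $V$. A preliminary check is that $0<\H^m(V\cap B(0,1))<\infty$: in the horizontal case this is immediate since the parabolic metric on $V$ equals the Euclidean one, and in the vertical case one invokes Lemma~\ref{Hs} for the upper bound $\H^m(V\cap B(0,1))\le c(n,m-1)^{-1}\H^{m-1}_E(V\cap B(0,1))<\infty$, together with the isometry $V=V\cap H+T\to\R^{m-2}\times\R$ with the natural parabolic metric to verify positivity. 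Once $p(V):=\H^m(V\cap B(0,1))$ is known to be positive and finite, $\H^m\restrict V$ is a nonzero locally finite translation invariant Radon measure on $V$, so by uniqueness of Haar measure it equals $c\,\mathcal{L}_V$ for some $c>0$; plugging in $V\cap B(0,1)$ forces $c=p(V)/\mathcal{L}_V(V\cap B(0,1))$.

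To upgrade to the scaling identity $\H^m(V\cap B(p,r))=p(V)r^m$ I would use that $V$ is $\delta_r$-invariant and $\delta_r$ scales $d$ by the factor $r$: then $\delta_r(V\cap B(0,1))=V\cap B(0,r)$ and $\H^m(\delta_r A)=r^m\H^m(A)$ give the identity at $p=0$, while translation invariance in $V$ propagates it to every $p\in V$.

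Next I would evaluate $p(V)$. For $V\in H(n,m)$ the parabolic metric on $V$ coincides with the Euclidean metric on the Euclidean $m$-plane $V$, so $\H^m\restrict V=\H^m_E\restrict V$; the standard computation gives $\H^m_E(V\cap B(0,1))=2^m$, independent of $V$, so $p(V)=2^m$. For $V\in V(n,m)$, any $R\in O(n)$ lifts to the map $\widetilde R(x,t)=(Rx,t)$ of $\Pn$, which is a parabolic isometry preserving $B(0,1)$; since $O(n)$ acts transitively on the $(m-2)$-dimensional horizontal subspaces $V\cap H$, $\widetilde R$ acts transitively on $V(n,m)$, and therefore $p(V)$ depends only on $m$. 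Call this common value $v(m)$.

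The bounds $1\le v(m)\le 2^m$ I would read off from Theorem~\ref{dens} applied to the bounded closed set $A:=V\cap B(0,1)$, which has $\H^m(A)=p(V)\in(0,\infty)$ by the preliminary step. At any $p$ in the parabolic interior of $A$ relative to $V$, all sufficiently small balls $B(p,r)$ lie in $A$, so $\Theta^{\ast m}(A,p)=p(V)r^m/(2r)^m=p(V)/2^m$; combined with the inequality $2^{-m}\le\Theta^{\ast m}(A,p)\le 1$ valid for $\H^m$-almost every such $p$ by Theorem~\ref{dens}(1), this forces $1\le v(m)\le 2^m$. The one slightly delicate step is the preliminary finiteness and positivity of $\H^m(V\cap B(0,1))$ in the vertical case, where the structural description $V=V\cap H+T$ is really used; everything else is bookkeeping.
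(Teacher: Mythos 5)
Your argument is correct and follows essentially the same route as the paper: uniqueness of Haar measure pins down $\H^m\restrict V$ as a constant multiple of $\mathcal L_V$, the dilations $\delta_r$ together with translation invariance give $\H^m(V\cap B(p,r))=p(V)r^m$, the horizontal case reduces to the standard Euclidean computation, and Theorem~\ref{dens}(1) gives the bounds $1\le v(m)\le 2^m$. A small cosmetic difference is in how the constancy of $p(V)$ on $V(n,m)$ is recorded: the paper observes that each such $V$ is isometric to the model space $\p^{m-2}$, while you phrase it via the transitive action of the lifted rotations $(x,t)\mapsto(Rx,t)$ on $V(n,m)$; these are equivalent, and indeed you already invoke the $\p^{m-2}$ isometry when checking positivity of $\H^m(V\cap B(0,1))$. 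Your choice of $A=V\cap B(0,1)$ when invoking Theorem~\ref{dens}(1) is in fact slightly more careful than the paper's phrase ``with $A=V$'', since the theorem requires $\H^m(A)<\infty$. One phrasing slip: at an interior point $p$ of $A$ relative to $V$, it is $V\cap B(p,r)$, not the full parabolic ball $B(p,r)$, that lies in $A$ for small $r$; since $A\cap B(p,r)=V\cap B(p,r)$ the density computation $\Theta^{\ast m}(A,p)=p(V)/2^m$ is unaffected, but the sentence should say so.
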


\begin{proof}
The Hausdorff measure $\H^m$ on $V\in P(n,m)$ agrees with a constant multiple of the Lebesgue measure of $V$ by the uniqueness of Haar measures, which gives $p(V)$. If $V\in H(n,m)$, it is isometric with $\R^m$, whence $p(V)=2^m$, see, e.g., \cite{EG92}, Theorem 2 in Section 2.2. If $V\in V(n,m)$ and $m\geq 3$ (for $n=2$ we have only the $t$-axis), it is isometric with $\p^{m-2}$ which gives $v(m)$. By Theorem \ref{dens}(1) with $A=V$, $1\leq v(m)\leq 2^m$. The claim for balls in the horizontal case is clear. For $V\in V(n,m)$,\ $\mathcal L_V(B(0,r))=\mathcal L_V(V\cap B(0,1))r^{n+2}$ because $B(0,r)=\delta_r(B(0,1))$ and $\det(\delta_r)=r^{n+2}$.
\end{proof}

I don't know the precise value of $v(m)$ but I expect it to be strictly less than $2^{m}$.

As $\H^m$ on $V\in P(n,m)$ is doubling, Vitali's covering theorem for it holds, which together with Lemma \ref{LebH}  yields the density theorem:

\begin{lm}\label{Lebdens}
If  $V\in P(n,m)$ and $A\subset V$, then  $\Theta^{m}(A,p)=2^{-m}p(V)$ for $\H^m$ almost all $p\in A$.
\end{lm}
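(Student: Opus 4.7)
The plan is to reduce the claim to the Lebesgue density theorem applied to the doubling measure $\H^m\restrict V$, combined with the explicit formula for $\H^m(V\cap B(p,r))$ supplied by Lemma \ref{LebH}.

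First I would record what Lemma \ref{LebH} gives on $V$. It says $\H^m\restrict V$ is a positive constant multiple of the Lebesgue measure on $V$, so in particular it is Borel regular, $\sigma$-finite, and doubling on balls centred in $V$, with doubling constant $2^m$ (since $\H^m(V\cap B(p,2r))/\H^m(V\cap B(p,r))=2^m$). The lemma also provides the exact value $\H^m(V\cap B(p,r))=p(V)r^m$ for every $p\in V$ and $r>0$, which is exactly what will convert a density-one statement into the desired constant $2^{-m}p(V)$.

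Since $\H^m\restrict V$ is a doubling Borel regular measure on the separable metric space $(V,d)$, Vitali's covering theorem applies to it, as the author already signals in the sentence introducing the lemma, and this yields the standard Lebesgue density theorem:
$$\lim_{r\to 0}\frac{\H^m(A\cap B(p,r))}{\H^m(V\cap B(p,r))}=1\qquad\text{for }\H^m\text{ a.e. }p\in A;$$
if $A$ is not $\H^m$ measurable, one first passes to a Borel hull of the same $\H^m$ measure, which does not affect the density at almost every point of $A$. Substituting $\H^m(V\cap B(p,r))=p(V)r^m$ and dividing by $(2r)^m$ then gives immediately
$$\Theta^m(A,p)=\lim_{r\to 0}(2r)^{-m}\H^m(A\cap B(p,r))=2^{-m}p(V)$$
for $\H^m$ almost every $p\in A$, as claimed. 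There is essentially no substantive obstacle here: the only non-routine input is the Vitali/density theorem for doubling Borel measures on a separable metric space, which is classical, and everything else is a direct substitution using Lemma \ref{LebH}.
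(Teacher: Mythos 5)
Your proof is correct and takes essentially the same route the paper intends: the sentence preceding Lemma~\ref{Lebdens} says precisely that doubling of $\H^m\restrict V$ plus Vitali's covering theorem plus Lemma~\ref{LebH} yield the density theorem, and you have filled in that outline, including the observation that the exact formula $\H^m(V\cap B(p,r))=p(V)r^m$ is what converts the density-one statement into the constant $2^{-m}p(V)$. The handling of possibly nonmeasurable $A$ via a measurable hull is a standard supplement that the paper leaves implicit.
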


 We call the measures $c\H^m\restrict V, V\in P(n,m), c>0$, \emph{$m$-flat}, and horizontal $m$-flat or vertical $m$-flat depending on whether $V\in H(n,m)$ or $V\in V(n,m)$.

We shall denote by $f_{\#}\mu$ the push-forward of a measure $\mu$  under a map  $f: f_{\#}\mu(A)= \mu(f^{-1}(A))$. The restriction of $\mu$ to a set $A$, $\mu\restrict A$, is defined by $\mu\restrict A(B)=\mu(A\cap B)$. The notation $\ll$ stands for absolute continuity. The support of a measure $\mu$ is denoted by $\spt\mu$. The closure of a set $A$ is $\overline{A}$.

By the notation $a\lesssim b$ we mean that $a\leq Cb$ for some constant $C$. The dependence of $C$, if not denoted explicitly, should be clear from the context.

\section{Lipschitz graphs, cones and rectifiable sets}\label{Lipgraph}
We now define Lipschitz graphs over homogeneous planes and characterize them with cones.

\begin{df}\label{lipgraph}We say that $G\subset \Pn$ is an \emph{$m$-Lipschitz graph}  if there exist $0<L<\infty, A\subset V\in P(n,m)$ and $g:A\to V^{\perp}$ such that $\|g(x)-g(y)\|\leq L\|x-y\|$ for $x,y\in A$ and $G=\{x+g(x):x\in A\}$. Then we also call $G$ an $m$-Lipschitz graph or  $(m,L)$-Lipschitz graph over $V$. We write $G=G_g$.

We say that $G\subset \Pn$ is a \emph{horizontal} (resp. \emph{vertical}) $m$-Lipschitz graph, or a horizontal (resp. vertical) $(m,L)$-Lipschitz graph,  if the above holds with $V\in H(n,m)$ (resp. $V\in V(n,m))$.
\end{df}

\begin{rems}\label{graphrem}

(1) If above $g$ is $L$-Lipschitz, then the map $x\mapsto x+g(x)$  is $\sqrt{L^2+1}$-bilipschitz. Hence $G_g$ is AD-$m$-regular, if $A$ is. 

(2) Let $P$ be the projection onto the $t$-axis; $P(x,t)=t$. If $G$ is a horizontal Lipschitz graph, then $\mathcal L^1(P(G))=0$. This follows applying the following Euclidean fact to the last coordinate function of $g$ with $G=G_g$: 

If $f:A\to\R, A\subset\R^m$, satisfies $|f(x)-f(y)|\leq C|x-y|^2$ for $x,y\in A$, then $\mathcal L^1(f(A))=0$. This is a consequence of a Sard theorem. A simple proof for a special case which suffices here can be found in \cite{Mat95}, Theorem 7.6.

(3) Immediately by the definitions, the horizontal Lipschitz graphs are locally Euclidean Lipschitz graphs and the Euclidean  Lipschitz graphs over vertical planes are locally vertical Lipschitz graphs.
\end{rems}

\begin{lm}\label{HVlm1}
If $G$ is a vertical $m$-Lipschitz graph, then the Euclidean Hausdorff dimension of $G$ is at most $m-1/2$. In particular, $\H^m_E(G)=0$. 
\end{lm}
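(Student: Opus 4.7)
The plan is to exploit the \emph{mixed regularity} of $g$. Writing $V = V\cap H + T$ and parametrizing $V$ by $(u,t)\in(V\cap H)\times\R$ with $V\cap H$ of Euclidean dimension $m-2$, the fact that $g$ takes values in $V^{\perp}\subset H$ means its parabolic and Euclidean norms agree on differences, so the Lipschitz hypothesis unpacks to
\[
|g(u_1,t_1)-g(u_2,t_2)|\le L\sqrt{|u_1-u_2|^2+|t_1-t_2|}.
\]
Hence $g$ is Euclidean $L$-Lipschitz in $u$ and Euclidean $(1/2)$-H\"older in $t$, and the graph map $\phi(u,t)=(u+g(u,t),t)$ inherits this anisotropy. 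The extra $1/2$ saved in the dimension bound comes from exploiting the H\"older behaviour in $t$.

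After a standard countable decomposition I may assume $A$ is Euclidean bounded. Fix a small $\delta>0$ and slice the $t$-projection $P(A)\subset\R$ into $\lesssim\delta^{-1}$ intervals $J_k$ of length $\delta$. Pick $t_k\in J_k$ and form the \emph{base slice}
\[
B_k=\{u+g(u,t_k):(u,t_k)\in A\}\subset\R^{n+1}.
\]
Because $u\mapsto u+g(u,t_k)$ is Euclidean Lipschitz on a bounded subset of $\R^{m-2}$, each $B_k$ is a genuine Euclidean Lipschitz graph of dimension $m-2$ and can therefore be covered by $\lesssim\delta^{-(m-2)}$ Euclidean balls of radius $\delta$.

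For $(u,t)\in A$ with $t\in J_k$ the $(1/2)$-H\"older bound gives $|g(u,t)-g(u,t_k)|\le L\sqrt{\delta}$ while $|t-t_k|\le\delta$, so $\phi(u,t)$ lies within Euclidean distance $\le C\sqrt{\delta}$ of $\phi(u,t_k)\in B_k$. Thus the slab image $\phi(A\cap P^{-1}(J_k))$ sits in the $C\sqrt{\delta}$-Euclidean neighbourhood of $B_k$. A $\delta$-ball centred on $B_k$ already absorbs the $\pm\delta$ spread in the $t$-direction; to cover the additional $\sqrt{\delta}$-thickening in the $V^{\perp}$-direction transverse to $B_k$ it suffices to subdivide it into $\lesssim\delta^{-1/2}$ pieces of width $\delta$. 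Summing over the $\lesssim\delta^{-1}$ slabs produces a cover of $G$ by
\[
\lesssim \delta^{-1}\cdot\delta^{-(m-2)}\cdot\delta^{-1/2}=\delta^{-(m-1/2)}
\]
Euclidean balls of radius $\delta$, so for any $s>m-1/2$,
\[
\H^s_E(G)\lesssim \delta^{s-(m-1/2)}\longrightarrow 0\quad\text{as }\delta\to 0,
\]
yielding the claimed dimension bound and $\H^m_E(G)=0$ a fortiori.

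The main obstacle is the recovering step: one must argue that the $\sqrt{\delta}$-thickening of $B_k$ contributes only a $\delta^{-1/2}$ factor to the $\delta$-ball count rather than something larger. The delicate point is to keep track of which Euclidean directions transverse to $B_k$ the $\sqrt{\delta}$-enlargement really occupies and to ensure that the $\pm\delta$ spread in $t$ is already absorbed by the $\delta$-balls of the base cover; this is exactly the point at which the anisotropy of the parabolic metric is decisive, improving the otherwise trivial bound $m$ to $m-1/2$.
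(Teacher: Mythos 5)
Your covering scheme is essentially the one the paper uses (decompose the base $V$ into pieces of Euclidean size $\delta$, exploit that the $1/2$-H\"older behaviour in $t$ confines the graph over each piece to a $\sqrt{\delta}$-thin transverse neighbourhood, then count $\delta$-balls), but the step you yourself flag as ``the main obstacle'' is a genuine gap, not merely a delicate point, and you do not close it. The phrase ``the $V^{\perp}$-direction'' treats $V^{\perp}$ as a line, whereas its Euclidean dimension is $n+2-m$. Covering the $C\sqrt{\delta}$-thickening of one base ball by $\delta$-balls therefore costs about $\delta^{-(n+2-m)/2}$, not $\delta^{-1/2}$, and your total count becomes $\delta^{-(m-1)-(n+2-m)/2}$, which equals $\delta^{-(m-1/2)}$ only in the case $m=n+1$, when $V^{\perp}$ is a line. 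Moreover, for $2\le m\le n$ no amount of bookkeeping can recover the $\delta^{-1/2}$ factor, because the asserted bound genuinely fails there: take $V=\{0\}\times\R$ and $g(0,t)=(h(t),0,\dots,0)$, where $h$ is a $1/2$-H\"older space-filling (Hilbert) curve onto $[0,1]^2$ inside a $2$-plane of $V^{\perp}$ (possible since $n+2-m\ge 2$). This is a vertical $2$-Lipschitz graph whose Euclidean orthogonal projection to that $2$-plane is onto a square, so its Euclidean Hausdorff dimension is at least $2>2-1/2$ and $\H^2_E(G)>0$; crossing with a cube in $V\cap H$ produces, for any $2\le m\le n$, a vertical $m$-Lipschitz graph of Euclidean dimension at least $m$. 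So your argument proves the statement only when $m=n+1$; it is worth noting that the paper's own proof makes exactly the same one-dimensional assumption at the step ``$M_r\lesssim r^{-1/2}$'', so it, too, is only valid as written in that codimension-one case, where your count does give $\dim_E G\le n+1/2=m-1/2$.

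There is also a smaller, repairable slip: you compare $\phi(u,t)$ with $\phi(u,t_k)$, but $(u,t_k)$ need not belong to $A$, so your base slice $B_k$ may be empty even when the slab $A\cap P^{-1}(J_k)$ carries most of the graph. Either extend $g$ to all of $V$ (coordinatewise McShane extension, at the cost of a dimensional factor in $L$, which is legitimate here because $g$ is real-vector-valued on a metric space) and run the argument for the extended graph, or argue as the paper does: cover the slab by cubes of Euclidean side $\delta$ in the $u$-variables times the $t$-interval $J_k$, and note that over each such cube the Lipschitz hypothesis gives $|g(u_1,t_1)-g(u_2,t_2)|\le L\sqrt{(m-2)\delta^2+\delta}\le C\sqrt{\delta}$, so no distinguished slice is needed. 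With that repair, and restricted to $m=n+1$, your proof coincides in substance with the paper's.
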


\begin{proof}
This is a well-known fact about H\"older graphs, but I give the simple argument. Let $g:A\to V^{\perp}, A\subset V\in V(n,m)$, be $L$-Lipschitz and $d(A)<1$. Cover $A$ with  Euclidean cubes $Q_i\subset V, i=1,\dots,N_r\lesssim r^{1-m}$ of side-length $r$. Then $\{x+g(x):x\in Q_i\}$ can be covered with Euclidean balls $B_{i,j}, j=1,\dots,M_r\lesssim r^{-1/2}$, of radius $r$. Hence
$$\sum_{i,j}d_E(B_{i,j})^{m-1/2}\lesssim r^{1-m}r^{-1/2}r^{m-1/2}=1,$$
from which the lemma follows
\end{proof}

\begin{cor}\label{HVlm}
If $G$ is a horizontal $m$-Lipschitz graph and $G'$ is a vertical $m$-Lipschitz graph, then $\H^m(G\cap G')=0$.
\end{cor}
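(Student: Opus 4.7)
The plan is to reduce the statement to Lemma~\ref{HVlm1}, which gives $\H^m_E(G')=0$, by establishing the comparison $\H^m\!\restrict G\lesssim \H^m_E\!\restrict G$ on the horizontal graph $G$. First, I invoke Remark~\ref{graphrem}(3): $G$ is locally a Euclidean Lipschitz graph. Quantitatively, write $G=G_g$ and decompose $g=(g_1,g_2)$ into its $\R^n$-part $g_1$ (taking values in the orthogonal complement of $V$ inside $\R^n\times\{0\}$) and its $t$-axis part $g_2$. Applying the parabolic $L$-Lipschitz condition to $x,y\in A\subset V\subset\R^n\times\{0\}$ and separating the two summands in $\|g(x)-g(y)\|^2=|g_1(x)-g_1(y)|^2+|g_2(x)-g_2(y)|$ yields
\begin{equation*}
|g_1(x)-g_1(y)|\leq L|x-y|,\qquad |g_2(x)-g_2(y)|\leq L^2|x-y|^2.
\end{equation*}

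Restricting to any bounded piece of $A$, I would then compare the two metrics on the corresponding piece of $G$. For $p=x+g(x)$ and $q=y+g(y)$, orthogonality of $x-y\in V$ and $g_1(x)-g_1(y)\in V^{\perp}\cap(\R^n\times\{0\})$ in $\R^n$ gives $d_E(p,q)\geq|x-y|$, while the two estimates above combine to give $d(p,q)\leq C_L|x-y|$, with $C_L$ depending on $L$ and on the diameter of the piece (via the quadratic term in $g_2$). Hence $d\leq C_L\,d_E$ on each bounded piece of $G$.

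This metric comparison immediately yields $\H^m\!\restrict G\leq C_L^m\,\H^m_E\!\restrict G$ on each bounded piece, since any Euclidean $\delta$-cover of a subset of the piece is automatically a parabolic $(C_L\delta)$-cover of comparable $m$-content. Covering $A$ by countably many bounded pieces propagates the inequality to all of $G$, and then
\begin{equation*}
\H^m(G\cap G')\lesssim\H^m_E(G\cap G')\leq\H^m_E(G')=0
\end{equation*}
by Lemma~\ref{HVlm1}. I expect the only mildly subtle point to be the need for localization: the quadratic behaviour of $g_2$ means the Euclidean Lipschitz constant of the parametrization grows with the size of the piece, so the comparison constant is not uniform over all of $G$, but a countable bounded exhaustion of $A$ handles this routinely.
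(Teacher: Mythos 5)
Your argument is correct, and it rests on the same key input as the paper's proof, namely Lemma \ref{HVlm1} giving $\H^m_E(G')=0$; only the reduction to that lemma differs. The paper projects: with $P(x,t)=(x,0)$, the set $P(G\cap G')$ lies in the horizontal hyperplane, where the parabolic and Euclidean metrics agree, so $\H^m(P(G\cap G'))=\H^m_E(P(G\cap G'))=0$, and then $G\cap G'$ is a horizontal Lipschitz graph over an $\H^m$-null subset of $V$, hence itself $\H^m$-null. You instead prove the metric comparison $d\leq C_L\,d_E$ on $G$ and transfer Hausdorff measures; this in effect quantifies Remark \ref{graphrem}(3) and buys a slightly stronger intermediate statement, $\H^m\restrict G\leq C_L^m\,\H^m_E\restrict G$ for every horizontal $(m,L)$-Lipschitz graph, at the cost of the splitting $g=(g_1,g_2)$ and a short computation, whereas the paper's projection argument avoids any computation. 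Two small remarks on your write-up: the localization is unnecessary, since $|g_2(x)-g_2(y)|\leq L^2|x-y|^2$ contributes only $\sqrt{|g_2(x)-g_2(y)|}\leq L|x-y|$ to $\|p-q\|$, so $d(p,q)\leq\sqrt{1+2L^2}\,|x-y|\leq\sqrt{1+2L^2}\,d_E(p,q)$ holds globally on $G$ with a constant depending only on $L$; and when transferring covers you should intersect the covering sets with $G$, because the inequality $d\leq C_L\,d_E$ is only known for pairs of points of $G$ (a Euclidean-small set off the graph can have large parabolic diameter) — intersecting does not increase Euclidean diameters, so the measure comparison goes through as you claim.
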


\begin{proof}
By Lemma \ref{HVlm1} $\H_E^m(G')=0$. Let $P(x,t)=(x,0)$ for $(x,t)\in\Pn$. Then $\H^m(P(G\cap G'))=\H_E^m(P(G\cap G'))=0$. So $G\cap G'$ is a horizontal $m$-Lipschitz graph over a set of measure zero and therefore $\H^m(G\cap G')=0$.
\end{proof}
 
For $V\in P(n,m), p\in\Pn$ and $0<s<1$ define the open cone around $V$,
\begin{equation}\label{cone}
X(p,V,s)=\{q\in\Pn:\|P_{V^{\perp}}(q-p)\|<s\|q-p\|\}=\{q\in\Pn:d(q-p,V)<s\|q-p\|\},\end{equation}
and for $r>0$ set $X(p,r,V,s)=X(p,V,s)\cap B(p,r)$.

Then by \eqref{norm}
\begin{equation}\label{M2eq4}
X(p,V^{\perp},\sqrt{1-s^2}) = \Pn\setminus \overline{X(p,V,s)}.
\end{equation}

Notice that  for small $s$  the parabolic horizontal cones are much narrower and the vertical much wider than the Euclidean cones.

\begin{lm}\label{conegraph}
Let $V\in P(n,m)$. 
\begin{itemize}
\item[(1)] If $L>0$ and $G$ is  an $(m,L)$-Lipschitz graph over $V$, then\\ $(G\setminus\{p\})\setminus X(p,V,s)=\emptyset$ for $p\in G$ and for $s>L$. 
\item[(2)] If $0<s<1$ and $(G\setminus\{p\})\setminus X(p,V,s)=\emptyset$ for $p\in G$, then $G$ is  an $(m,L)$-Lipschitz graph over $V$ with $L=\tfrac{s}{\sqrt{1-s^2}}$. 
\end{itemize}
\end{lm}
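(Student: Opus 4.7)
The plan is to prove both implications directly from the orthogonal decomposition \eqref{norm}, using the fact that for $p = x + g(x), q = y + g(y) \in G_g$ one has $P_V(p - q) = x - y$ (since $x - y \in V$) and $P_{V^{\perp}}(p - q) = g(x) - g(y)$ (since $g(x) - g(y) \in V^\perp$).

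For part (1), I would take distinct $p, q \in G$ and write them as above with $x, y \in A \subset V$. The $L$-Lipschitz condition on $g$ gives
\[
\|P_{V^\perp}(p-q)\| = \|g(x) - g(y)\| \leq L\|x - y\| = L\|P_V(p - q)\|.
\]
Since \eqref{norm} implies $\|P_V(p-q)\| \leq \|p-q\|$, we obtain $\|P_{V^\perp}(p-q)\| \leq L\|p - q\| < s\|p - q\|$ for any $s > L$, placing $q \in X(p, V, s)$.

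For part (2), the first step is injectivity of $P_V$ on $G$. Suppose $p, q \in G$ with $P_V(p) = P_V(q)$ but $p \neq q$. Then $q - p \in V^\perp$, and \eqref{norm} gives $\|P_{V^\perp}(q-p)\| = \|q-p\|$, contradicting the cone condition $\|P_{V^\perp}(q-p)\| < s\|q-p\|$ since $s < 1$. Hence each $p \in G$ is determined by $P_V(p)$, so setting $A = P_V(G) \subset V$ and $g(P_V(p)) = P_{V^\perp}(p)$ yields $G = G_g$.

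It remains to estimate the Lipschitz constant of $g$. For distinct $p = x + g(x), q = y + g(y) \in G$, the cone condition reads $\|g(y) - g(x)\| = \|P_{V^\perp}(q-p)\| < s\|q-p\|$, while \eqref{norm} gives $\|q-p\|^2 = \|y - x\|^2 + \|g(y) - g(x)\|^2$. Substituting and rearranging yields
\[
(1 - s^2)\|g(y) - g(x)\|^2 < s^2 \|y - x\|^2,
\]
which is exactly $\|g(y) - g(x)\| \leq \tfrac{s}{\sqrt{1-s^2}}\|y - x\|$. I do not expect any real obstacle: the whole argument is a formal consequence of the Pythagoras-type identity \eqref{norm}, which the author has already verified holds uniformly across both $V \in H(n,m)$ and $V \in V(n,m)$. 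The contrast with analogous results in Carnot groups, where the noncommutative group law obstructs such a clean decomposition, is precisely what makes the parabolic setting tractable here.
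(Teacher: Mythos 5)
Your proof is correct and follows essentially the same route as the paper: both directions are derived from the Pythagorean identity \eqref{norm}, with (2) obtained by noting that the cone condition forces $\|P_V(q-p)\|>\sqrt{1-s^2}\,\|q-p\|$, so $P_V|G$ is injective and $g=P_{V^\perp}\circ(P_V|G)^{-1}$ is $\tfrac{s}{\sqrt{1-s^2}}$-Lipschitz. Your explicit injectivity step and the rearrangement $(1-s^2)\|g(y)-g(x)\|^2<s^2\|y-x\|^2$ are just a slightly more spelled-out version of the paper's computation.
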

\begin{proof} If $s>L$ and $G_g$ is  an $(m,L)$-Lipschitz graph over $V$, then for $p=x+g(x), q=y+g(y)\in G_g, p\neq q,$ we have 
$$\|P_{V^{\perp}}(q-p)\|=\|g(x)-g(y)\|\leq L\|q-p\|<s\|q-p\|,$$
so $q\in X(p,V,s)$, and (1) follows.

If $G$ satisfies the assumption of (2), then $\|P_{V^{\perp}}(q-p)\|<s\|q-p\|$ for $p,q\in G, p\neq q$, so by \eqref{norm} $\|P_V(q-p)\|>\sqrt{1-s^2}\|q-p\|$. Hence $P_V|G$ is injective and its inverse is $(\sqrt{1-s^2})^{-1}$-Lipschitz. Letting $g=P_{V^{\perp}}\circ (P_V|G)^{-1}$ we have $G=G_g$ with $g$\ $s (\sqrt{1-s^2})^{-1}$-Lipschitz. 

\end{proof}

Notice that by \eqref{M2eq4} (2) can be written as: If $0<t<1$ and $(G\setminus\{p\})\cap\overline{ X(p,V^{\perp},t)}=\emptyset$ for $p\in G$, then $G$ is  an $(m,L)$-Lipschitz graph over $V$ with $L=\tfrac{\sqrt{1-t^2}}{t}$. 

We now define

\begin{df}\label{m-rect}
A set $E\subset\Pn$ is \emph{LG $m$-rectifiable} if for every $0<L<\infty$ there are $(m,L)$-Lipschitz graphs $G_i, i=1,2,\dots,$ such that
$$\H^m(E\setminus\bigcup_{i=1}^{\infty}G_i)=0.$$
A set $E\subset\Pn$ is \emph{purely LG $m$-unrectifiable} if $\H^m(E\cap F)=0$ for every LG $m$-rectifiable set $F\subset \R^n$.
\end{df}

\begin{df}\label{m,L-rect}
For $0<L<\infty$ a set $E\subset\Pn$ is \emph{$(m,L)$-rectifiable} if  there are $(m,L)$-Lipschitz graphs $G_i, i=1,2,\dots,$ such that
$$\H^m(E\setminus\bigcup_{i=1}^{\infty}G_i)=0.$$
$E\subset\Pn$ is \emph{purely $(m,L)$-unrectifiable} if $\H^m(E\cap F)=0$ for every $(m,L)$-rectifiable set $F\subset \Pn$.\
\end{df}

Thus $E$ is LG $m$-rectifiable if and only if it is $(m,L)$-rectifiable for all $L>0$ and it is purely $(m,L)$-unrectifiable if and only if $\H^m(E\cap G)=0$ for every $(m,L)$-Lipschitz graph $G$. 

In the above definitions it is enough to consider Lipschitz graphs $G_g, g:A\to V^{\perp}$, where $A$ is closed, since for arbitrary $A\subset V$  $g$ can be extended as a Lipschitz map with the same constant to the closure of $A$. If $V$ is vertical we could take $A=V$, because real valued (with the standard metric) Lipschitz functions on any metric space can be extended without increasing the constant, see \cite{Fed69}, 2.10.44. But if $V$ is horizontal, the last coordinate function of every Lipschitz map $g:V\to V^{\perp}$ must be constant, so we cannot always extend.

As in the Euclidean case, see Theorem 15.6 in \cite{Mat95}, any  $E\subset \Pn$ with $\mathcal H^m(E)<\infty$ has an $\H^m$ almost unique decomposition $E=R\cup P$ where $R$ is LG $m$-rectifiable and $P$ is purely LG $m$-unrectifiable. Similarly for $(m,L)$-rectifiability.

\begin{thm}\label{mLthm}
Let $E\subset \Pn$ be $\H^m$ measurable and $\mathcal H^m(E)<\infty$. 
\begin{itemize}
\item[(1)] Let $s>L>0$. If $E$ is $(m,L)$-rectifiable, then for $\H^m$ almost all $a\in E$ there is $V\in P(n,m)$ such that
\begin{equation}\label{M2eq}
\lim_{r\to 0}r^{-m}\H^m(E\cap B(a,r)\setminus X(a,V,s))=0.
\end{equation}
\item[(2)] Let $0<s<1$ and $L>\tfrac{s}{\sqrt{1-s^2}}$. If for $\H^m$ almost all $a\in E$ there is $V\in P(n,m)$ such that \eqref{M2eq} holds, then $E$ is $(m,L)$-rectifiable.
\end{itemize}
\end{thm}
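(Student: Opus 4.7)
The plan for (1) is direct. Assume $E$ is $(m,L)$-rectifiable, so there exist $(m,L)$-Lipschitz graphs $G_i$ over $V_i \in P(n,m)$ with $\H^m(E \setminus \bigcup_i G_i) = 0$. For $\H^m$-a.e. $a \in E$, $a$ belongs to some $G_i$; set $V = V_i$. Since $s > L$, Lemma \ref{conegraph}(1) gives $G_i \setminus \{a\} \subset X(a,V,s)$, whence $E \cap B(a,r) \setminus X(a,V,s) \subset (E \setminus G_i) \cap B(a,r)$. The set $E \setminus G_i$ is $\H^m$-measurable with finite $\H^m$-measure, so Theorem \ref{dens}(2) gives $\Theta^{\ast m}(E \setminus G_i, a) = 0$ at $\H^m$-a.e. $a \in \Pn \setminus (E \setminus G_i)$, in particular at $\H^m$-a.e. $a \in G_i$. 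This yields \eqref{M2eq}.

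For (2), I would first choose $s_1, s_2$ with $s < s_1 < s_2 < 1$ and $L > s_2/\sqrt{1-s_2^2}$ (possible by continuity of $t \mapsto t/\sqrt{1-t^2}$ since $L > s/\sqrt{1-s^2}$), together with a small $\epsilon > 0$ to be fixed later. Take a countable dense subset $\{V_k\} \subset P(n,m)$. For each $a$ where the hypothesis holds, continuity of $V \mapsto P_{V^{\perp}}$ in the operator norm gives some $V_k$ close enough to the hypothetical $V_a$ that $X(a,V_a,s) \subset X(a,V_k,s_1)$; the $o(r^m)$ rate in \eqref{M2eq} for $V_a, s$ then transfers to $V_k, s_1$. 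A standard Egorov argument partitions $E$ (up to an $\H^m$-null set) as $E = \bigcup_{k,N} F_{k,N}$ with
\begin{equation*}
F_{k,N} = \{a \in E : \H^m(E \cap B(a,r) \setminus X(a,V_k,s_1)) \leq \epsilon r^m \text{ for all } 0 < r < 1/N\}.
\end{equation*}
It then suffices to show each $F := F_{k,N}$ is $(m,L)$-rectifiable.

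The key claim is that $\H^m$-a.e. $p \in F$ admits $\rho_p > 0$ with $(F \setminus \{p\}) \cap B(p,\rho_p) \subset X(p,V_k,s_2)$. Granted this, writing $F = \bigcup_M \{p \in F : (F \setminus \{p\}) \cap B(p,1/M) \subset X(p,V_k,s_2)\}$ and further covering each piece by countably many parabolic balls of diameter $< 1/M$ (so the cone condition holds globally within each small piece), Lemma \ref{conegraph}(2) realizes each piece as an $(m,L_2)$-Lipschitz graph over $V_k$ with $L_2 = s_2/\sqrt{1-s_2^2} < L$, completing the proof.

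The main obstacle is the claim, and it rests on a forbidden-ball observation: if $q \notin X(p,V_k,s_2)$ and $r = \|q - p\|$, then for any $w$ with $\|w\| \leq \eta r$, where $\eta = (s_2 - s_1)/(s_1 + 1)$, the triangle inequality for the parabolic norm together with \eqref{norm} yields
\begin{equation*}
\|P_{V_k^{\perp}}(q + w - p)\| \geq s_2 r - \|w\| \geq s_1(r + \|w\|) \geq s_1 \|q + w - p\|,
\end{equation*}
so $B(q,\eta r) \subset B(p,2r) \setminus X(p,V_k,s_1)$. Combined with the defining estimate of $F_{k,N}$, this forces $\H^m(E \cap B(q,\eta r)) \leq 2^m \epsilon r^m = (2/\eta)^m \epsilon (\eta r)^m$, while Theorem \ref{dens}(1) applied at $\H^m$-a.e. $q \in E$ delivers $\Theta^{\ast m}(E,q) \geq 2^{-m}$. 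The hardest step is to convert these two estimates into a genuine contradiction on a putative bad set $\tilde B \subset F$ of positive $\H^m$-measure: I would pick a density point $p$ of $\tilde B$ (via Theorem \ref{dens}(1) applied to $\tilde B$) and use a Vitali-type selection among bad partners $q \in \tilde B$ to align $\eta\|p - q\|$ with a scale at which the upper-density lower bound is realized at $q$. Fixing $\epsilon$ sufficiently small in terms of $\eta$ and the density constant then rules out such a $p$, forcing $\H^m(\tilde B) = 0$.
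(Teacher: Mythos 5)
Part (1) of your proposal is correct and is exactly the paper's argument (Lemma \ref{conegraph}(1) plus Theorem \ref{dens}(2)). In part (2), the reduction is fine: passing to a slightly larger cone parameter, a countable dense family of planes, the sets $F_{k,N}$, and the final step from your key claim to $(m,L)$-Lipschitz graphs via small-diameter pieces and Lemma \ref{conegraph}(2) all work. The gap is the key claim itself, and it sits exactly at the step you flag as ``hardest''. The forbidden-ball estimate gives $\H^m(E\cap B(q,\eta r))\leq 2^m\epsilon r^m$ only at the single scale $\rho=\eta\|p-q\|$ dictated by the pair $(p,q)$, whereas Theorem \ref{dens}(1) gives $\Theta^{\ast m}(E,q)\geq 2^{-m}$ only as a limsup, i.e.\ large measure along some sequence of scales you do not control; nothing aligns the two. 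A Vitali/$5r$ selection of forbidden balls does not rescue this: it yields $\H^m(\tilde B)\leq\sum_i\H^m(E\cap B(p_i,\eta r_i))\leq 2^m\epsilon\sum_i r_i^m$, and there is no way to bound $\sum_i r_i^m$ --- disjointness of balls controls that sum only when the measure in each ball is comparable to $r_i^m$, which is precisely what the forbidden-ball bound denies. Equivalently, your scheme would go through if one had $\Theta_{\ast}^m(E,\cdot)>0$ almost everywhere, but for a general set of finite measure only the upper density is bounded below, and your estimates only show that points of the bad set have small lower density, which is no contradiction.

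This is exactly the difficulty the paper's Lemma \ref{M2lemma} is built to overcome, and it requires an ingredient absent from your sketch: one works with the purely $(m,L)$-unrectifiable part of $E$, where Lemma \ref{conegraph}(2) guarantees that almost every point $p$ has a partner $p^{\ast}\in E$ in the complementary cone $X(p,V^{\perp},\sigma t)$ at an essentially maximal scale $h(p)$; one then covers not by balls but by cylinders $C_p=(P_V)^{-1}(P_V(B(p,\kappa t h(p))))$, shows that $F\cap C_p$ lies in the union of the two cones $X(p,2h(p),V^{\perp},t)\cup X(p^{\ast},2h(p),V^{\perp},t)$ (this is where the partner point is used), and selects the cylinders so that the projected balls in $V$ are disjoint --- so $\sum_p h(p)^m$ is controlled by the Lebesgue measure of a ball in the $m$-plane $V$, i.e.\ by $\delta^m$, rather than by $\H^m(E)$. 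This produces $\H^m(E\cap B(a,\delta/6))\lesssim\lambda\delta^m$, hence $\Theta^{\ast m}(E,a)<2^{-m}$ for small $\lambda$ and $\H^m(E)=0$ by Theorem \ref{dens}. Note also that the paper never proves (nor needs) your stronger pointwise claim that $F$ is literally contained in a cone around almost every one of its points in some neighbourhood; it only needs the measure estimate on the purely unrectifiable part. Unless you supply an argument of this partner-point/projection type, part (2) of your proof is incomplete.
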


\begin{proof}
To prove (1) suppose that $E$ is $(m,L)$-rectifiable and $s>L$. Then there are $(m,L)$-Lipschitz graphs $G_i, i=1,2,\dots,$ over $V_i\in P(n,m)$ such that $\H^m(E\setminus\bigcup_{i=1}^{\infty}G_i)=0.$ By Theorem \ref{dens} for $\H^m$ almost all $a\in G_i, \Theta^{\ast m}(E\setminus G_i,a)=0$. By Lemma \ref{conegraph},\  $(G_i\setminus\{a\})\setminus X(a,V_i,s)=\emptyset$ for $a\in G_i$. Thus 
\begin{equation}\label{M2eq1}
\lim_{r\to 0}r^{-m}\H^m(E\cap B(a,r)\setminus X(a,V_i,s))=0.\end{equation}

For (2) we use the following lemma. Its proof below is a slight modification of the proof of Lemma 15.14 in \cite{Mat95}.

\begin{lm}\label{M2lemma}
Let $V\in P(n,m)$ and let $t, L,\delta$ and $\lambda$ be positive numbers with $0<t<1$ and $L>\tfrac{\sqrt{1-t^2}}{t}$.
If $E\subset\Pn$ is purely $(m,L)$-unrectifiable and 
\begin{equation}\label{M2eq2}
\H^m(E\cap X(a,r,V^{\perp},t))\leq \lambda r^m\ \text{for}\ a\in E, 0<r<\delta,
\end{equation}
then
\begin{equation}\label{M2eq3}
\H^m(E\cap B(a,\delta/6))\leq C(m,L,t)\lambda \delta^m\ \text{for}\ a\in \Pn.\end{equation}
\end{lm}
\begin{proof} We may assume that $E\subset B(a,\delta/6)$.
Choose $0<\kappa<\sigma<\tau <1, 0<\kappa<(\tau-\sigma)/2$, depending on $L$ and $t$ such that $L>\tfrac{\sqrt{1-(\sigma t)^2}}{\sigma t}$. Let 
$F$ be the set of $p\in E$ for which  
$$E\cap X(p,V^{\perp},\sigma t) \neq \emptyset.$$
As $E$ is purely $(m,L)$-unrectifiable we have by Lemma \ref{conegraph}(2) and the remark after it that
 $\H^m(E\setminus F)=0$. Let
$$h(p)=\sup\{\|q-p\|: q\in E\cap X(p,V^{\perp},\sigma t)\}\in (0,\delta/3]\ \text{for}\ p\in F$$
and choose $p^{\ast}\in E\cap X(p,V^{\perp},\sigma t)$ with $\tau h(p)< \|p-p^{\ast}\|\leq h(p)$. Letting 
$$C_p=(P_V)^{-1}(P_V(B(p,\kappa th(p))))=(P_V)^{-1}(V\cap B(P_V(p),\kappa th(p)))$$ we have
\begin{equation}\label{M2eq5}
F\cap C_p\subset X(p,2h(p),V^{\perp},t)\cup X(p^{\ast},2h(p),V^{\perp},t)\ \text{for}\ p\in F.
\end{equation}
To prove this let $q\in F\cap C_p$. Then $\|P_V(q-p)\|\leq\kappa th(p)$. We have $\|q-p\|\leq h(p)$, because if $\|q-p\|> h(p)$, then $\|P_V(q-p)\|\leq\kappa th(p)<\sigma t\|q-p\|$, so $q\in E\cap X(p,V^{\perp},\sigma t)$, whence $\|q-p\|\leq h(p)$. 

Suppose $q\not\in X(p^{\ast},2h(p),V^{\perp},t)$. Then
\begin{align*}
&t\|q-p^{\ast}\|\leq \|P_V(q-p^{\ast})\|\leq \|P_V(p-p^{\ast})\|+ \|P_V(q-p)\|\\
&\leq \sigma t\|p-p^{\ast}\| + \kappa th(p) \leq (\sigma+\kappa)th(p),
\end{align*}
whence $\|q-p^{\ast}\|\leq  (\sigma+\kappa)h(p)$. As $\|p-p^{\ast}\|> \tau h(p)$, we obtain
$$\|p-q\|\geq \|p-p^{\ast}\| - \|p^{\ast}-q\|> (\tau-\sigma-\kappa)h(p)\geq \|P_V(p-q)\|/t,$$
because $\|P_V(p-q)\|\leq \kappa th(p)\leq (\tau-\sigma-\kappa)th(p)$, since $2\kappa<\tau-\sigma.$ Consequently, $q\in X(p,2h(p),V^{\perp},t)$ and we have verified \eqref{M2eq5}.

By \eqref{M2eq2} we now have 
$$\H^m(F\cap C_p)\leq 2\lambda (2h(p))^m.$$
By a standard covering lemma, see, e.g., \cite{Mat95}, Theorem 2.1, we can cover $P_V(F)$ with countably many balls $P_VB(p,\kappa th(p)), p\in S\subset F,$ such that the balls $P_V(B(p,\kappa th(p)/5))$ are disjoint. Then $F\subset \cup_{p\in S}C_p$. As $\H^m(V\cap B(q,r))\leq 2^mr^m$ for $q\in V, r>0$,  by Lemma \ref{LebH}, we have
\begin{align*}
&\H^m(E)=\H^m(F) \leq \sum_{p\in S}\H^m(F\cap C_p) \leq \sum_{p\in S}10^m\kappa^{-m}2\lambda (\kappa h(p)/5)^m\\
&\leq C(m,\kappa,t)\lambda\H^m(V\cap B(P_V(a),\delta))\leq C(m,L,t)\lambda\delta^m.
\end{align*}
\end{proof}

Let us return to the proof of Theorem \ref{mLthm}. To prove (2) suppose that for $\H^m$ almost all $a\in E$ there is $V\in P(n,m)$ such that \eqref{M2eq} holds. Let $0<s<u$ be such that $L>\tfrac{u}{\sqrt{1-u^2}}$. Suppose that $E$ is purely $(m,L)$-unrectifiable. It is enough to show that then $\H^m(E)=0$. There is $\eta>0$ such that $X(a,V,s)\subset X(a,V',u)$ for all  $V,V'\in P(n,m)$ with $d(V,V')<\eta$ and for all $a\in\Pn$. Splitting $E$ into a finite union, we may assume that there is $V\in P(n,m)$ such that \eqref{M2eq} holds with $s$ replaced by $u$ for this $V$ and for all $a\in E$. Let $\lambda>0$. We can assume that there is a positive number $\delta$ such that  with $t=\sqrt{1-u^2}$,
\begin{equation}\label{M2eq6}
\H^m(E\cap  X(a,r,V^{\perp},t))\leq\H^m(E\cap B(a,r)\setminus X(a,V,u))\leq \lambda r^m\end{equation}
for $a\in E, 0<r<\delta$, because $E$ is a countable union of such sets. Then by Lemma \ref{M2lemma}, 
$$\H^m(E\cap B(a,r))\leq C(m,L,t)\lambda r^m\ \text{for}\ a\in \Pn, 0<r<\delta/6.$$
Choosing $\lambda$ sufficiently small, we have $\Theta^{\ast m}(E,a)<2^{-m}$ for all $a\in E$. Thus $\H^m(E)=0$ by Theorem \ref{dens}.
\end{proof}

\section{Approximate tangent planes and tangent measures}\label{APPtan}

\begin{df}\label{m-apprtan}
A plane $V\in P(n,m)$ is an \emph{approximate tangent $m$-plane}  of a set $E\subset\Pn$ at a point $a\in\Pn$ if for every $s>0$,
$$\lim_{r\to 0}r^{-m}\H^m(E\cap B(a,r)\setminus X(a,V,s))=0.$$
\end{df} 

Recall that here $m$ is the Hausdorff dimension of $V$ and the linear dimension of $V$ is either $m$ or $m-1$.

If $E\subset\Pn$ is AD-$m$-regular, then $V\in P(n,m)$ is an approximate tangent $m$-plane of $E$ at $a$ if and only for every $0<s<1$ there is $r>0$ such that $E\cap B(a,r)\setminus X(a,V,s)=\emptyset$. This is pretty obvious.

\begin{lm}\label{apprtandist}
Let $E\subset \Pn$ be $\H^m$ measurable and $\mathcal H^m(E)<\infty$. If $V\in P(n,m)$ is an approximate tangent $m$-plane of $E$ at $a$, then for every $\delta>0$,
\begin{equation}\label{apptaneq}
\lim_{r\to 0}r^{-m}\H^m(E\cap B(a,r)\cap \{p:d(p-a,V)\geq\delta r\})=0.
\end{equation}
Conversely, for  $\H^m$ almost all $a\in E$, if $V\in P(n,m)$ and \eqref{apptaneq} holds for every $\delta>0$, then $V$ is an approximate tangent $m$-plane of $E$ at $a$.
\end{lm}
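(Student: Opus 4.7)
The first direction will follow immediately from a set inclusion: if $p\in B(a,r)$ satisfies $d(p-a,V)=\|P_{V^{\perp}}(p-a)\|\ge\delta r$, then since $\|p-a\|\le r$ one has $\|P_{V^{\perp}}(p-a)\|\ge\delta\|p-a\|$, so $p\notin X(a,V,\delta)$. Consequently
\[
\{p\in B(a,r):d(p-a,V)\ge\delta r\}\subset B(a,r)\setminus X(a,V,\delta),
\]
and \eqref{apptaneq} drops out of Definition \ref{m-apprtan} applied with $s=\delta$. No measurability or density arguments are needed here.

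For the converse I will work on the $\H^m$-full-measure subset of $E$ on which $\Theta^{\ast m}(E,a)\le 1$, as guaranteed by Theorem \ref{dens}(1). For such an $a$, choose $C_a<\infty$ and $r_0>0$ with $\H^m(E\cap B(a,r))\le C_a r^m$ for $0<r<r_0$. Fix $s>0$ and a small parameter $\eta\in(0,1)$ to be sent to $0$ at the end. The idea is to write
\[
E\cap B(a,r)\setminus X(a,V,s)\subset \bigl(E\cap B(a,\eta r)\bigr)\,\cup\,\bigl(E\cap B(a,r)\setminus X(a,V,s)\setminus B(a,\eta r)\bigr).
\]
The inner ball contributes at most $C_a\eta^m r^m$. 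On the outer piece any point $p$ satisfies both $\|p-a\|\ge\eta r$ and $\|P_{V^{\perp}}(p-a)\|\ge s\|p-a\|\ge s\eta r$, so it lies in $B(a,r)\cap\{p:d(p-a,V)\ge s\eta r\}$, whose $\H^m$-measure divided by $r^m$ tends to $0$ by the standing hypothesis \eqref{apptaneq} applied with $\delta=s\eta$. Taking $\limsup_{r\to 0}$ then yields
\[
\limsup_{r\to 0}r^{-m}\H^m\bigl(E\cap B(a,r)\setminus X(a,V,s)\bigr)\le C_a\eta^m,
\]
and since $\eta$ was arbitrary the limit is $0$. This delivers the approximate tangent plane property for every $s>0$.

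The only real subtlety — and the reason the converse is stated only almost everywhere — is that \eqref{apptaneq} gives information about points at a definite metric scale $\delta r$ away from $V+a$ but sees nothing inside $B(a,\eta r)$, where $d(p-a,V)$ can be arbitrarily small while $\|P_{V^{\perp}}(p-a)\|$ still exceeds $s\|p-a\|$. The upper density bound $\Theta^{\ast m}(E,a)\le 1$ from Theorem \ref{dens}(1) is precisely what lets the inner contribution be absorbed into the prefactor $C_a\eta^m$, which then kills itself as $\eta\to 0$. Aside from that accounting, both directions of the lemma reduce to elementary manipulations of the parabolic cone and projection identity \eqref{norm}.
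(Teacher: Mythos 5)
Your proof is correct and follows essentially the same route as the paper: the first direction is the same cone inclusion, and your converse decomposition into $E\cap B(a,\eta r)$ plus the points with $d(p-a,V)\geq s\eta r$, with the inner part controlled by the upper density bound $\Theta^{\ast m}(E,a)\leq 1$ from Theorem \ref{dens}, is exactly the paper's argument (with $\eta$ playing the role of its $\e$). No changes needed.
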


\begin{proof}
The first statement is obvious, since $X(a,r,V,s)\subset B(a,r)\cap \{p:d(p-a,V)\leq sr\}$. For the converse, observe that for all $\e>0$, 
$$B(a,r)\setminus X(a,V,s) \subset (B(a,r)\cap \{p:d(p-a,V)\geq\e sr\})\cup B(a,\e r),$$ 
and use the fact that for $\H^m$ almost all $a\in E, \Theta^{\ast m}(E,a)\leq 1$ by Theorem \ref{dens}.
\end{proof}
An approximate tangent $m$-plane need not be unique at every point, but it is at almost all points:

\begin{lm}\label{apprunique}
Let $E\subset \Pn$ be $\H^m$ measurable and $\mathcal H^m(E)<\infty$. Then at $\H^m$ almost all  points of $E$ where an approximate tangent $m$-plane of $E$ exists, it is unique.
\end{lm}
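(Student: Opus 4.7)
The plan is to prove that the set $N := \{a \in E : E \text{ admits at least two distinct approximate tangent } m\text{-planes at } a\}$ is $\H^m$-null. I would first restrict to the $\H^m$-full-measure subset $E^{*} \subset E$ where $2^{-m} \leq \Theta^{\ast m}(E, a) \leq 1$ (Theorem \ref{dens}). Since $P(n, m)$ is compact in the operator-norm metric and any two distinct elements lie in a product of disjoint open balls of $P(n, m)^2$, by a countable cover and $\sigma$-subadditivity it suffices to prove, for each fixed $V_0 \neq W_0$ in $P(n, m)$ and each sufficiently small $\eta > 0$, that the set
$$F := \{a \in E^{*} : \exists V, W \in P(n,m) \text{ approx.\ tangent at } a \text{ with } d(V, V_0), d(W, W_0) < \eta\}$$
has $\H^m(F) = 0$. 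For $V$ with $d(V, V_0) < \eta$ the operator-norm continuity $|d(q, V) - d(q, V_0)| \leq C\eta \|q\|$ gives $X(a, V, s) \subset X(a, V_0, s + C\eta)$, so for every $a \in F$ and every $\delta > C\eta$ Lemma \ref{apprtandist} applied to $V$ and $W$ yields
$$\H^m(E \cap B(a, r) \cap \{p : d(p - a, V_0) \geq \delta r \text{ or } d(p - a, W_0) \geq \delta r\}) = o(r^m).$$

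The next ingredient is a Friedrichs-angle-type inequality: for $V_0 \neq W_0$ in $P(n, m)$ there is $C_{\ast} = C_{\ast}(V_0, W_0) > 0$ with $d(q, V_0 \cap W_0) \leq C_{\ast} \max(d(q, V_0), d(q, W_0))$ for all $q \in \Pn$, which I would verify case by case (horizontal--horizontal, vertical--vertical, horizontal--vertical) using \eqref{norm} and the explicit projection formulas of Section \ref{Preli}; in each case it reduces to the standard Euclidean angle estimate in $\R^n$. Moreover $V_0 \cap W_0$ is a homogeneous subgroup whose parabolic Hausdorff dimension $k_0$ is strictly less than $m$, by a direct dimension count in each of the three cases (since $V_0 \not\subset W_0$ forces the linear dimension of $V_0 \cap W_0$ to drop strictly below that of $V_0$). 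Writing $N_\rho(Y)$ for the closed parabolic $\rho$-neighbourhood of $Y$, the last display upgrades to $\H^m(E \cap B(a, r) \setminus N_{C_{\ast} \delta r}((V_0 \cap W_0) + a)) = o(r^m)$. Combined with the lower-density bound, which supplies $r_j \to 0$ with $\H^m(E \cap B(a, r_j)) \geq r_j^m / 2$, this forces
$$\H^m(E \cap B(a, r_j) \cap N_{C_{\ast} \delta r_j}((V_0 \cap W_0) + a)) \geq r_j^m / 4 \text{ for all large } j.$$

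To contradict this I would bound the left-hand side of the last display from above. Splitting $E$ into its intersection with a large Egorov subset $\mathcal{E}_n \subset E$ (chosen so that $\H^m(E \cap B(p, \rho)) \leq 2 \cdot 2^m \rho^m$ uniformly for $p \in \mathcal{E}_n$ and $\rho \leq 1/n$) and its complement, the part of $E \cap N_{C_{\ast} \delta r_j}((V_0 \cap W_0) + a)$ outside $\mathcal{E}_n$ is bounded by $\H^m((E \setminus \mathcal{E}_n) \cap B(a, r_j))$, and crucially, by Theorem \ref{dens}(2) applied to $E \setminus \mathcal{E}_n$, this quantity is $o(r_j^m)$ at $\H^m$-almost every $a \in \mathcal{E}_n$. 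For the part inside $\mathcal{E}_n$, I cover it by $\lesssim \delta^{-k_0}$ parabolic balls of radius $C_{\ast} \delta r_j$ centred in $\mathcal{E}_n$ via $5B$-Vitali (the ball count comes from comparing the Lebesgue volume $\lesssim \delta^{n + 2 - k_0} r_j^{n + 2}$ of the neighbourhood and $\approx (\delta r_j)^{n + 2}$ of a single such ball), obtaining a contribution $\lesssim \delta^{m - k_0} r_j^m$. Choosing $\delta$ small enough that the implicit constant times $\delta^{m - k_0}$ is $<1/8$, the total upper bound is $< r_j^m / 4$ at $\H^m$-a.a.\ $a \in F \cap \mathcal{E}_n$ along the density-realising sequence, contradicting the lower bound. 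Hence $\H^m(F \cap \mathcal{E}_n) = 0$ for every $n$, and since $\bigcup_n \mathcal{E}_n$ has $\H^m$-full measure in $E$, $\H^m(F) = 0$. The chief technical step is this covering-and-density bookkeeping, paralleling the Euclidean argument in Chapter 15 of \cite{Mat95}; without the observation that the Egorov error $\H^m((E \setminus \mathcal{E}_n) \cap B(a, r_j))$ is $o(r_j^m)$ at a.e.\ point of $\mathcal{E}_n$, the argument fails at small scales.
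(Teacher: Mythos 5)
Your proposal is correct and its core mechanism is the same as the paper's: points near both tangent planes $V$ and $V'$ are forced near the lower-dimensional homogeneous subgroup $V\cap V'$, whose $\H^m$-neighbourhoods are covered by too few parabolic balls to sustain the positive upper density guaranteed by Theorem~\ref{dens}, yielding a contradiction. The only meaningful technical difference is the reduction: you fix a pair $(V_0,W_0)$ via a countable cover of $P(n,m)^2$ off the diagonal and prove a quantitative parabolic Friedrichs-angle inequality $d(q,V_0\cap W_0)\leq C_{\ast}\max(d(q,V_0),d(q,W_0))$ (which does reduce case by case to the Euclidean estimate via \eqref{norm}, as you claim), whereas the paper works pointwise with the tangent pair $V,V'$ at each $a$ and only needs a soft compactness bound $d(p-a,V\cap V')<\eta(s)\|p-a\|$ with $\eta(s)\to 0$; both versions feed into the same covering count, but the paper's is slightly leaner since the linear-in-$s$ estimate is not actually required.
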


\begin{proof}
Let us first check that for any $V,V'\in P(n,m), V\neq V',$ there is $\eta(s)>0$ for $s>0$ with $\lim_{s\to 0}\eta(s)=0$ such that for $a,p\in\Pn$,
$$d(p-a,V)<s\|p-a\|\ \text{and}\ d(p-a,V')<s\|p-a\| \Rightarrow d(p-a,V\cap V')<\eta(s)\|p-a\|.$$
To prove this we may assume that $a=0$ and, using the dilations $\delta_{1/\|p\|}$, that $\|p\|=1$. If the claim is false, there are 
$\eta>0, p_i\in\Pn, \|p_i\|=1,$ and $s_i>0$ with $\lim_{i\to\infty}s_i=0$ such that $d(p_i,V)<s_i, d(p_i,V')<s_i$ and  $d(p_i,V\cap V')\geq\eta.$ Taking a converging subsequence of $(p_i)$ we obtain a contradiction.

By Theorem \ref{dens}(1) there are $\H^m$ measurable sets $E_j \subset E, j=1,2,\dots,$ and positive numbers $r_j$ such that $\H^m(E\setminus \cup_jE_j)=0$ and
\begin{equation}\label{appr1}
\H^m(E_j\cap B(p,r)) \leq (3r)^m\ \text{for}\ p\in E_j, 0<r<r_j.
\end{equation}
By Theorem \ref{dens}(2) it is enough to prove that the approximate tangent $m$-planes of each $E_j$ are unique almost everywhere in $E_j$. 

Suppose that $E_j$ has approximate tangent $m$-planes $V,V'\in P(n,m)$ at $a\in E_j$ with $V\neq V'$. Let $0<r<r_j, 0<s<1$ and let $\eta(s)$ be as above. Then for $r>0$, 
$$X(a,r,V,s)\cap X(a,r,V',s)\subset \{p\in\Pn:d(p-a,V\cap V')\leq \eta(s)r\}.$$
Let $k<m$ be the Hausdorff dimension of $V\cap V'$. By Lemma \ref{LebH} for any $q\in V\cap V', \rho>0,$\ $\H^k(V\cap V'\cap B(q,\rho))=p(V\cap V')\rho^k$, which implies that any ball of radius $2r$ in $V\cap V'$ can be covered with $N\lesssim \eta (s)^{-k}$ balls of radius $\eta(s)r$. Thus $E_j\cap B(a,r)\cap \{p\in\Pn:d(p-a,V\cap V')\leq \eta(s)r\}$ can be covered with $N$ balls of radius $4\eta(s)r$ centered in $E_j$. Hence by \eqref{appr1}
\begin{align*}
&\H^m(E_j\cap B(a,r)\cap \{p\in\Pn:d(p-a,V\cap V')\leq \eta(s)r\})\\
&\leq N(12\eta(s)r)^{m}\lesssim \eta(s)^{m-k}r^m<r^m,\end{align*}
if $s$ is sufficiently small. Now $E_j\cap B(a,r)$ is contained in the union of the sets $E_j\cap B(a,r)\setminus X(a,V,s), E_j\cap B(a,r)\setminus X(a,V',s)$  and $E_j\cap X(a,r,V,s)\cap X(a,r,V',s)$. From these we deduce that $\Theta^{\ast m}(E_j,a) < 2^{-m}$ and the lemma follows from Theorem \ref{dens}.
\end{proof}

Tangent measures were introduced by Preiss in \cite{Pre87} to solve the density characterization of rectifiability. 

Define

$$T_{a,r}(p)=\delta_{1/r}(p-a),\ p,a\in\Pn, r>0.$$

So $T_{a,r}$ blows up the ball $B(a,r)$ to the unit ball. Now we also blow up measures.

\begin{df}
Let $\mu$ be a Radon measure on $\Pn$. A non-zero Radon measure $\nu$ is called a \emph{tangent measure} of $\mu$ at $a\in\Pn$ if there are sequences $(c_i)$ and $(r_i)$ of positive numbers such that $r_i\to 0$ and $c_iT_{a,r_i\#}\mu\to\nu$ weakly. We denote the set of tangent measures of $\mu$ at $a$ by $\tanm(\mu,a)$.
\end{df}

Tangent measures tell us how the measure looks locally. We say that $\mu$ has a unique tangent measure at $a$ if there is $\nu$ such that $\tanm(\mu,a)=\{c\nu: 0<c<\infty\}$.

The following result was proved in \cite{Mat05}, Theorem 3.2, in metric groups in a much more general setting:

\begin{thm}\label{M2}
Let $\mu$ be a Radon measure on $\Pn$. Then the following are equivalent:
\begin{itemize}
\item[(1)] For $\mu$ almost all $a\in\Pn$\ there is an $m$-flat measure $\nu$ such that $\tanm(\mu,a)=\{c\nu: 0<c<\infty\}$.
\item[(2)] For $\mu$ almost all $a\in\Pn$\ $\mu$ has a unique tangent measure at $a$.
\end{itemize}
\end{thm}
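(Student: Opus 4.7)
The implication $(1)\Rightarrow(2)$ is immediate from the definition of uniqueness of the tangent measure (up to positive scalar multiples), so the plan is to concentrate on $(2)\Rightarrow(1)$. My approach follows the scheme Preiss introduced in the Euclidean setting and which the author extended to metric groups. The starting point is the foundational principle that \emph{tangent measures to tangent measures are tangent measures}: for $\mu$ almost every $a$, if $\nu\in\tanm(\mu,a)$ and $b\in\spt\nu$, then after a suitable rescaling every element of $\tanm(\nu,b)$ belongs to $\tanm(\mu,a)$. This transfer lemma relies only on the definition of $T_{a,r}$ and the scaling identity $d(\delta_r p,\delta_r q)=r\,d(p,q)$ recorded in the preliminaries, so it applies verbatim in $\Pn$.

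Assuming (2), at $\mu$-a.e.\ $a$ there is a Radon measure $\nu_a$ with $\tanm(\mu,a)=\{c\nu_a:0<c<\infty\}$. By the transfer principle applied along a full-measure set, $\nu_a$ has the self-similarity property that at every $b\in\spt\nu_a$ one has $\tanm(\nu_a,b)\subset\{c\nu_a:c>0\}$. In other words, $\nu_a$ reproduces itself (up to scale) under every admissible blow-up at every point of its support. Step one of the remaining argument is to check that this self-similarity forces $\spt\nu_a$ to be a homogeneous subgroup of $(\Pn,+)$: invariance under dilations $\delta_r$ follows because $T_{b,r}$-pushforwards of $\nu_a$ converge to a scalar multiple of $\nu_a$ for every $b\in\spt\nu_a$ and every $r>0$, and invariance under translation by any $b\in\spt\nu_a$ follows from the fact that $\spt\nu_a$ contains the origin (by choice of blow-up point) together with the scale invariance.

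Step two is to use the explicit classification of homogeneous subgroups of $\Pn$ given in the preliminaries: they are precisely $\{0\}$, $\Pn$, and the elements of $P(n,m)$ for $1\le m\le n+1$. The trivial and full-space cases are excluded because $\nu_a$ is non-zero and gives mass to the correct scale $m$ (a dimension comparison via the scaling of $c_i$ in the definition of tangent measure rules out the wrong values). Hence $\spt\nu_a\in P(n,m)$. A self-similar Radon measure supported on a homogeneous subgroup is, by uniqueness of Haar measure (cf.\ Lemma \ref{LebH}), a constant multiple of $\H^m\restrict V$, i.e.\ an $m$-flat measure, which yields (1).

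The main obstacle is precisely step one above: showing that self-similarity at every support point promotes $\spt\nu_a$ to be algebraically closed under $+$ and under $\delta_r$. The delicate point is that the convergence of rescaled blow-ups to a scalar multiple of $\nu_a$ must be carried out along sequences $r_i\to 0$, so passing from the approximate invariance at small scales to exact invariance requires a careful weak-limit compactness argument for the sequences $c_i T_{b,r_i\#}\nu_a$, uniform on compact sets. Once this algebraic closure is established, the parabolic classification finishes the proof quickly, in contrast to the Euclidean case where Preiss needed heavy moment-method machinery; here the short list of homogeneous subgroups of $\Pn$ does the work for us.
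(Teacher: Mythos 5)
The paper itself does not prove Theorem \ref{M2}: it is quoted from \cite{Mat05}, Theorem 3.2, so the comparison is with the argument of that reference, and your outline follows the same Preiss-type strategy. As written, however, it has a genuine gap at exactly the point you yourself flag as ``the main obstacle''. From uniqueness plus the tangent-of-tangent principle you only obtain the limiting statement $\tanm(\nu_a,b)\subset\{c\nu_a:c>0\}$ for $b\in\spt\nu_a$, and this does not yield what your Step one actually needs, namely the exact identity $T_{b,r\#}\nu_a=c(b,r)\nu_a$ for every $b\in\spt\nu_a$ and every \emph{fixed} $r>0$. The compactness argument you gesture at cannot produce it: for a fixed $r$ the measure $T_{b,r\#}\nu_a$ is not the weak limit of any blow-up sequence $c_iT_{b,r_i\#}\nu_a$ with $r_i\to0$, so weak-$*$ compactness along such sequences gives no information at scale $r$. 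The proof in \cite{Mat05} (and, in the Euclidean case, Theorem 14.16(1) of \cite{Mat95}) does not upgrade a limiting self-similarity of $\nu_a$; it proves a stronger transfer statement directly at the level of $\mu$ by a separability/density-point argument (countable dense families of test functions, sets of points where the blow-up behaviour is quantitatively uniform, then a Lebesgue density argument): for $\mu$ almost every $a$, every $\nu\in\tanm(\mu,a)$, every $b\in\spt\nu$ and every $r>0$ one has $T_{b,r\#}\nu\in\tanm(\mu,a)$. Combined with uniqueness this gives the exact quasi-invariance at all scales in one stroke. That measure-theoretic argument is the real content of the implication $(2)\Rightarrow(1)$ and is missing from your proposal.

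Two further points. First, even granted exact quasi-invariance, ``uniqueness of Haar measure'' does not apply immediately: you only know $\nu_a(A+b)=\lambda(b)\nu_a(A)$ and $\delta_{1/r\#}\nu_a=\kappa(r)\nu_a$, i.e.\ quasi-invariance with a multiplicative character, and you must still rule out a non-trivial $\lambda$; this does follow (the dilation relation forces $\lambda(\delta_rb)=\lambda(b)$, and multiplicativity then gives $\lambda\equiv1$, or alternatively one shows $\nu_a(B(b,\rho))=\nu_a(B(0,\rho))$ and invokes uniqueness of uniformly distributed measures as in Theorem 3.4 of \cite{Mat95}), but it needs saying. Second, your exclusion of the subgroups $\{0\}$ and $\Pn$ ``because $\nu_a$ gives mass to the correct scale $m$'' is unjustified: Theorem \ref{M2} concerns an arbitrary Radon measure with no $m$-dimensional behaviour assumed, and for instance a Dirac mass (unique tangent $\delta_0$) or Lebesgue measure on $\Pn$ satisfies (2); the conclusion of \cite{Mat05} is that the tangent is a Haar measure of a closed homogeneous subgroup, and in the application to Theorem \ref{main} the dimension is pinned down by the density bounds of Theorem \ref{dens} for $\H^m\restrict E$, not by the scaling of the normalizing constants $c_i$ alone. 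So that step of your argument, as stated, would fail.
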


We shall use the following lemma. 
\begin{lm}\label{tanlemma}
Let $\mu$ and $\nu$ be Radon measures on $\Pn$.
\begin{itemize}
\item[(1)] If $B\subset\Pn$ is $\mu$ measurable, $a\in\spt\mu$ and $\lim_{r\to 0}\frac{\mu(B(a,r)\setminus B)}{\mu(B(a,r)}=0$, then $\tanm(\mu\restrict B,a)=\tanm(\mu,a)$.
\item[(2)] If $\nu \ll \mu$, then $\tanm(\nu,a)=\tanm(\mu,a)$ for $\nu$ almost all $a\in\Pn$.
\end{itemize}
\end{lm}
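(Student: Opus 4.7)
The plan is to treat the two parts separately; in both, the key idea is to write the blown-up measures as a main term plus a manageable error and use the hypothesis to show that the error tends weakly to zero along the relevant sequences.

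For part (1), I would use the decomposition
\[ c_i T_{a,r_i\#}\mu \;=\; c_i T_{a,r_i\#}(\mu\restrict B) \;+\; c_i T_{a,r_i\#}\bigl(\mu\restrict (\Pn\setminus B)\bigr) \]
and show that the last summand converges weakly to zero. For a test function $\phi\in C_c(\Pn)$ with $\spt\phi\subset B(0,R)$, the absolute value of the integral of $\phi$ against this summand is at most $\|\phi\|_\infty\,c_i\mu(B(a,Rr_i)\setminus B)$. The density hypothesis rewrites this as $\eta_i\,\|\phi\|_\infty\,c_i\mu(B(a,Rr_i))$ with $\eta_i\to 0$, so it suffices to bound $c_i\mu(B(a,Rr_i))$ uniformly in $i$. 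If $c_i T_{a,r_i\#}\mu$ converges weakly, this is immediate from $c_i\mu(B(a,Rr_i))=c_i T_{a,r_i\#}\mu(B(0,R))$ and the usual upper bound on mass of open bounded sets under weak convergence of Radon measures. For the converse direction I would instead use $\mu(B(a,Rr_i)\setminus B)\leq\tfrac{\eta_i}{1-\eta_i}\mu(B(a,Rr_i)\cap B)$ to control the error by $c_i(\mu\restrict B)(B(a,Rr_i))$, which is bounded along any sequence for which $c_i T_{a,r_i\#}(\mu\restrict B)$ converges. Both inclusions of $\tanm$ follow.

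For part (2), I would invoke the Radon--Nikodym theorem to write $\nu=f\mu$ with $f\geq 0$ Borel; on the $\nu$-full-measure set $\{f>0\}$, take $a$ at which the Lebesgue--Besicovitch differentiation theorem in $(\Pn,d)$ gives
\[ \lim_{r\to 0}\frac{1}{\mu(B(a,r))}\int_{B(a,r)}|f-f(a)|\,d\mu=0. \]
At such an $a$ and $\phi\in C_c(\Pn)$ with $\spt\phi\subset B(0,R)$ I would expand
\[ \int\phi\,d(c_i T_{a,r_i\#}\nu)\;=\;f(a)\!\int\phi\,d(c_i T_{a,r_i\#}\mu)\;+\;c_i\!\int\phi\circ T_{a,r_i}\,(f-f(a))\,d\mu, \]
and the last term is bounded in absolute value by $\|\phi\|_\infty\,c_i\mu(B(a,Rr_i))\cdot\varepsilon_i$ with $\varepsilon_i\to 0$. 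The factor $c_i\mu(B(a,Rr_i))$ is bounded when $c_i T_{a,r_i\#}\mu$ converges weakly; in the reverse direction I would use $\nu(B(a,r))\geq\tfrac{f(a)}{2}\mu(B(a,r))$ for all sufficiently small $r$, which is again a direct consequence of the differentiation theorem. Consequently every convergent subsequence on one side produces a convergent subsequence on the other, with limits related by multiplication by the positive scalar $f(a)$; since $\tanm$-sets are automatically closed under multiplication by positive constants, $\tanm(\nu,a)=\tanm(\mu,a)$.

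The main technical input is the Lebesgue--Besicovitch differentiation theorem for Radon measures in the parabolic metric space $(\Pn,d)$. This is available because $(\Pn,d)$ is doubling, every parabolic ball of radius $2r$ being covered by a bounded number of parabolic balls of radius $r$, which is exactly the ingredient that powers Vitali's covering theorem (already used for $\H^m$ on flat planes in Lemma \ref{Lebdens}). Everything else is routine weak-convergence bookkeeping, so I do not anticipate any obstacle beyond setting up these estimates carefully.
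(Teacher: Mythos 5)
Your treatment of part (1) is correct and is essentially the standard Euclidean argument (Lemma 14.5 of \cite{Mat95}) that the paper itself invokes; the only slip is cosmetic (the mass bound $\sup_i c_i\mu(B(a,Rr_i))<\infty$ comes from upper semicontinuity of weak convergence on compact sets, e.g. the closed ball $B(0,R)$, not from a bound on open sets), and your two-sided error estimates, including $\mu(B(a,Rr_i)\setminus B)\leq\tfrac{\eta_i}{1-\eta_i}\mu(B(a,Rr_i)\cap B)$ for the converse inclusion, are fine.

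Part (2) has a genuine gap, and it is exactly the point the paper is careful about. You justify the Lebesgue--Besicovitch differentiation theorem for an \emph{arbitrary} Radon measure $\mu$ on $(\Pn,d)$ by metric doubling of the space. That is not sufficient: the Vitali covering theorem you appeal to (as used for $\H^m\restrict V$ in Lemma \ref{Lebdens}) requires the \emph{measure} to be doubling, whereas in (2) $\mu$ is arbitrary; differentiating one arbitrary Radon measure with respect to another requires the Besicovitch covering property of the metric, which does not follow from metric doubling --- Le Donne and Rigot \cite{LR19} showed that homogeneous (hence doubling) metrics on Heisenberg groups can fail it --- and the paper explicitly says it is not known whether Besicovitch's covering theorem holds for the parabolic metric $d$. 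The paper's route is different: Besicovitch (hence the density/differentiation theorem) is known for some bilipschitz-equivalent homogeneous metrics, e.g. $\|(x,t)\|=\max\{|x|,\sqrt{|t|}\}$ by Itoh \cite{Ito18} and in great generality by \cite{LR19}; one runs the Euclidean proof of Lemma 14.6 of \cite{Mat95} in such a metric, and since tangent measures are defined through the dilations $T_{a,r}$ alone and not through the metric, the conclusion transfers to statement (2) as written. (Bilipschitz equivalence does not by itself preserve the Besicovitch property, so you cannot simply replace $d$ by an equivalent metric inside your differentiation claim; it is the metric-independence of $\tanm$ that legitimizes the transfer.) Replacing your doubling justification by this argument closes the gap; the rest of your expansion with $\nu=f\mu$, the bound $\nu(B(a,r))\geq\tfrac{f(a)}{2}\mu(B(a,r))$ for small $r$, and the scalar relation between the limits is then sound.
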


In Euclidean spaces this is the same as Lemmas 14.5 and 14.6 in \cite{Mat95}. The same proof works for (1), and it works for any other homogeneous metric in place of $d$, too. The proof of Lemma 14.6 of \cite{Mat95} works for (2) provided the density theorem for $\mu$ and $\nu$ holds. This would require Besicovitch's covering theorem and I don't know if it is true with the metric $d$ in $\Pn$. But it is true for some other homogeneous metric by a very general result of Le Donne and Rigot \cite{LR19} and also by the special case $\|(x,t)\|=\max\{|x|,\sqrt{|t|}\}$ treated by Itoh \cite{Ito18}. Statement (2) then follows, since it only involves dilations and not the metric explicitly.

I believe that Besicovitch's covering theorem in $\Pn$ is valid also with $d$, but it may be a bit trickier to prove than Itoh's case. Le Donne and Rigot have shown that in Heisenberg groups there are also homogeneous metrics for which it fails.

\begin{lm}\label{tangraph}
Let $A\subset V\in P(n,m), g:A\to V^{\perp}$ Lipschitz and $G=G_g$. Then for $\H^m$ almost all $a\in G$, $\overline{P_V(\spt\nu)}=V$ for every $\nu\in \tanm(\H^m\restrict G,a).$
\end{lm}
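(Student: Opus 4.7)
The plan is to argue by contradiction, exploiting the bilipschitz parametrization $\Phi\colon A\to G$, $\Phi(x)=x+g(x)$, which satisfies $P_V\circ\Phi=\mathrm{id}_A$ and is $\sqrt{L^2+1}$-bilipschitz (Remark \ref{graphrem}(1)). Call $a\in G$ \emph{good} if, writing $x_0:=P_V(a)=\Phi^{-1}(a)\in A$, one has (i) $\Theta^m(A,x_0)=\Theta^m(V,x_0)=2^{-m}p(V)$, equivalently $\H^m(V\cap B(x_0,r)\setminus A)=o(r^m)$ as $r\to 0$, and (ii) $\Theta^{\ast m}(\H^m\restrict G,a)\leq 1$. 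By Lemmas \ref{LebH}, \ref{Lebdens} and Theorem \ref{dens}(1), $\H^m$-a.e.\ $a\in G$ is good. Fix such an $a$, let $\nu\in\tanm(\H^m\restrict G,a)$ with $c_iT_{a,r_i\#}(\H^m\restrict G)\to\nu$ weakly, and suppose toward contradiction that $\overline{P_V(\spt\nu)}\neq V$; choose $v\in V$ and $\rho>0$ with $V\cap U(v,\rho)$ disjoint from $\overline{P_V(\spt\nu)}$.

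The key geometric observation is that both $V$ and $V^{\perp}$ are $\delta_r$-invariant, so $P_V$ commutes with every $\delta_r$; hence $P_V\circ T_{a,r}=T_{x_0,r}\circ P_V$. Fix $R>\sqrt{L^2+1}(\|v\|+\rho/2)$ and set
$$K:=P_V^{-1}\bigl(V\cap B(v,\rho/2)\bigr)\cap B(0,R),$$
a compact set contained in $P_V^{-1}(V\cap U(v,\rho))$, hence disjoint from $\spt\nu$, so $\nu(K)=0$. The commutation relation yields
$$T_{a,r_i}^{-1}(K)=P_V^{-1}\bigl(V\cap B(w_i,\rho_i)\bigr)\cap B(a,r_iR),\quad w_i:=x_0+\delta_{r_i}(v),\ \rho_i:=r_i\rho/2,$$
and $P_V\circ\Phi=\mathrm{id}_A$, combined with the Lipschitz bound on $\Phi$ and the choice of $R$, gives $G\cap T_{a,r_i}^{-1}(K)=\Phi(A\cap B(w_i,\rho_i))$.

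Because $B(w_i,\rho_i)\subset B(x_0,r_i(\|v\|+\rho/2))$, property (i) together with $\H^m(V\cap B(w_i,\rho_i))=p(V)\rho_i^m$ (Lemma \ref{LebH}) yields $\H^m(A\cap B(w_i,\rho_i))\geq \tfrac12 p(V)(r_i\rho/2)^m$ for large $i$, and bilipschitz invariance then produces
$$\H^m\bigl(G\cap T_{a,r_i}^{-1}(K)\bigr)\geq c(L,V,\rho)\,r_i^m.$$
To convert this into a contradiction I need $c_ir_i^m$ bounded away from $0$. Since $\nu\neq 0$, fix a bounded open $U_0\subset B(0,R_0)$ with $\nu(U_0)>0$; by (ii) and the Portmanteau inequality for open sets,
$$0<\nu(U_0)\leq \liminf_i c_i\H^m\bigl(G\cap B(a,r_iR_0)\bigr)\lesssim R_0^m\liminf_i c_ir_i^m,$$
so after extracting a subsequence $c_ir_i^m\geq c_0>0$. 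Therefore $c_iT_{a,r_i\#}(\H^m\restrict G)(K)\geq c(L,V,\rho)\,c_0>0$ for all large $i$, contradicting $\limsup_i c_iT_{a,r_i\#}(\H^m\restrict G)(K)\leq\nu(K)=0$ from weak convergence on the compact set $K$.

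The main technical nuisance is the bookkeeping: coordinating the three scales — the blowup scale $r_i$, the normalization $c_i$ and the density remainder $o(r_i^m)$ at $x_0$ — so that $K$ captures enough rescaled mass to contradict $\nu(K)=0$. The commutation $P_V\circ\delta_r=\delta_r\circ P_V$, a direct consequence of the homogeneity of $V$ and $V^{\perp}$, is precisely what aligns the preimages $T_{a,r_i}^{-1}(K)$ with the horizontal density data at $x_0\in A\subset V$.
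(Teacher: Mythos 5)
Your proof is correct and follows essentially the same route as the paper's: assume $\overline{P_V(\spt\nu)}\subsetneq V$, use the commutation $P_V\circ\delta_r=\delta_r\circ P_V$ to translate the blow-up preimage into a family of shrinking balls $B(w_i,\rho_i)$ centered near $x_0=P_V(a)$ in $V$, and derive a contradiction with $x_0$ being a Lebesgue density point of $A$. The one genuine difference is how the normalization is handled: the paper simply cites the fact (Remark 14.4(3) of \cite{Mat95}) that at points of positive finite upper and lower $m$-density one may take $c_i=c\,r_i^{-m}$, while you rederive the needed bound $c_ir_i^m\geq c_0>0$ directly from the Portmanteau inequality on a bounded open set together with the upper density bound $\Theta^{\ast m}(\H^m\restrict G,a)\leq 1$. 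You also truncate to a compact cylinder $K$ before applying weak convergence, whereas the paper works with the full (unbounded) cylinder $V\cap B(b,R)+V^\perp$ and handles the compactness implicitly via the containment $B(w_i,\rho_i)\subset B(x_0,(R+\|b\|)r_i)$ — your version makes the Portmanteau step cleaner. These are expository refinements; the underlying argument is the same. (Minor: once you know $\liminf_i c_ir_i^m>0$ you already have $c_ir_i^m\geq c_0$ for large $i$; no subsequence extraction is needed.)
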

\begin{proof}
Let $a\in G$ and suppose that there are $\nu\in \tanm(\H^m\restrict G,a)$ and $b\in V\setminus \overline{P_V(\spt\nu)}$, which means that there is $R>0$ for which $\nu(V\cap B(b,R)+V^{\perp})=0$. Since $\delta_r$ and $P_V$ commute, one easily checks that for $r>0$,
$$T_{a,r}^{-1}(V\cap B(b,R)+V^{\perp})=V\cap B(P_V(a)+\delta_r(b),Rr)+V^{\perp}.$$
Hence
\begin{align*}
T_{a,r\#}(\H^m\restrict G)&(V\cap B(b,R)+V^{\perp})=\H^m(G\cap (V\cap B(P_V(a)+\delta_r(b),Rr)+V^{\perp}))\\
&\geq \H^m(A\cap B(P_V(a)+\delta_r(b),Rr)).
\end{align*}
We have for some $c>0$ and some sequence $r_i>0$ with $\lim_{i\to\infty}r_i=0$, $\nu=c\lim_{i\to\infty}r_i^{-m}T_{a,r_i\#}(\H^m\restrict G)$, see \cite{Mat95}, Remark 14.4(3). It follows that 
$$\lim_{i\to\infty}r_i^{-m}\H^m(A\cap B(P_V(a)+\delta_{r_i}(b),Rr_i))=0.$$
Since $B(P_V(a)+\delta_{r_i}(b),Rr_i)\subset B(P_V(a),(R+\|b\|)r_i)$, this implies that $P_V(a)$ is not a density point of $A$. Hence by Lemma \ref{Lebdens} the $\H^m$ measure of the set of such $P_V(a)$ is zero, which is the same as to say that the set of $a\in G$ as in the first sentence of the proof has $\H^m$ measure zero.
\end{proof}
\begin{lm}\label{apprtanlm}
Let $E\subset \Pn$ be $\H^m$ measurable and $\mathcal H^m(E)<\infty$. Then at $\H^m$ almost all points of $E$ the following is true: if there is an $m$-flat measure $\lambda_a=\H^m\restrict V_a, V_a\in P(n,m),$ such that $\tanm(\H^m\restrict E,a)=\{c\lambda_a: 0<c<\infty\}$, then $V_a$ is an approximate tangent $m$-plane of $E$ at $a$. 
Conversely, for $\H^m$ almost all $a\in E$, if $\Theta_{\ast}^m(E,a)>0$ and $E$ has an approximate tangent $m$-plane $V_a$ at $a$, then $\spt\nu\subset V_a$ for every $\nu\in\tanm(\H^m\restrict E,a)$.
\end{lm}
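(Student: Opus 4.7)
The plan is to prove each implication by contradiction, setting $\mu := \H^m \restrict E$ and working at points $a \in E$ where, by Theorem \ref{dens}, $\Theta^{\ast m}(E,a) \leq 1$ and where the conclusions of Lemma \ref{apprtandist} apply. The key structural fact used throughout is that, since $V_a$ is $\delta_r$-invariant, $T_{a,r}$ scales cones, distance sets and balls around $a$ cleanly by $1/r$: one checks directly that $T_{a,r}(B(a,r) \setminus X(a,V_a,s)) = B(0,1) \setminus X(0,V_a,s)$, that $T_{a,r}^{-1}(U(p,\rho)) = U(a + \delta_r(p), r\rho)$, and that the condition $d(T_{a,r}(q), V_a) \geq \delta$ is equivalent to $d(q-a, V_a) \geq r\delta$.

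For the direct implication, assume $\tanm(\mu,a) = \{c\lambda_a : c>0\}$ with $\lambda_a = \H^m \restrict V_a$, and suppose $V_a$ were \emph{not} an approximate tangent plane: there would exist $s,\delta >0$ and $r_i \downarrow 0$ with $r_i^{-m}\mu(B(a,r_i) \setminus X(a,V_a,s)) \geq \delta$. Set $c_i := r_i^{-m}$; the upper density bound makes $\mu_i := c_i T_{a,r_i\#}\mu$ uniformly bounded on bounded sets, so after passing to a subsequence $\mu_i \to \nu$ weakly. The compact set $K := B(0,1) \setminus X(0,V_a,s)$ satisfies $\mu_i(K) \geq \delta$, and upper semicontinuity on compact sets gives $\nu(K) \geq \delta$. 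In particular $\nu \neq 0$, so $\nu \in \tanm(\mu,a)$ and by hypothesis $\nu = c'\H^m \restrict V_a$ for some $c' > 0$. Since $V_a \cap K \subset \{0\}$, however, $\nu(K) = 0$, a contradiction.

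For the converse, assume $\Theta_*^m(E,a) > 0$ and that $V_a$ is an approximate tangent plane, and suppose some $\nu \in \tanm(\mu,a)$ contains a point $p \in \spt\nu \setminus V_a$; set $2\delta := d(p,V_a) > 0$ and pick sequences $c_i,r_i$ with $c_i T_{a,r_i\#}\mu \to \nu$ weakly. By the triangle inequality every $q \in U(p,\delta)$ satisfies $d(q,V_a) \geq \delta$, and $U(p,\delta) \subset B(0,R)$ with $R := \|p\| + \delta$. Writing $\alpha := \nu(U(p,\delta)) > 0$, lower semicontinuity on open sets yields $c_i T_{a,r_i\#}\mu(U(p,\delta)) \geq \alpha/2$ for all large $i$. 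Pulling back through $T_{a,r_i}$ and using the scaling of $d(\cdot,V_a)$, the preimage $T_{a,r_i}^{-1}(U(p,\delta))$ lies in $B(a,Rr_i) \cap \{q : d(q-a,V_a) \geq \delta r_i\}$, so
$$c_i\,\mu\bigl(B(a,Rr_i) \cap \{q : d(q-a,V_a) \geq \delta r_i\}\bigr) \geq \alpha/2.$$
Lemma \ref{apprtandist} applied at scale $Rr_i$ with threshold $\delta/R$ makes the $\mu$-measure on the left of order $o(r_i^m)$, forcing $c_i r_i^m \to \infty$. On the other hand $\Theta_*^m(E,a) > 0$ gives $\mu(B(a,Rr_i)) \gtrsim r_i^m$, so $c_i T_{a,r_i\#}\mu(B(0,R)) \to \infty$, contradicting the finiteness of the Radon limit $\nu$ on bounded sets.

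The main technical care is the bookkeeping with the anisotropic dilations $\delta_r$: verifying that the cones $X(0,V_a,s)$, the distance sets $\{d(\cdot,V_a) \geq \delta\}$ and the images of small balls under $T_{a,r}$ all transform cleanly, so that compactness, the support condition on $\nu$, and Lemma \ref{apprtandist} line up. The role of the hypothesis $\Theta_*^m(E,a) > 0$ in the converse is precisely to produce the blow-up of total mass that contradicts finiteness of $\nu$; without it one could only conclude $c_i r_i^m \to \infty$, which is a priori consistent with an arbitrary positive Radon limit.
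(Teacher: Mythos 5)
Your proof is correct, and it runs on the same blow-up machinery as the paper (density bounds from Theorem \ref{dens}, scaling of cones and distance sets under $T_{a,r}$, semicontinuity of weak convergence), but it is organized differently and is more self-contained. For the direct implication the paper invokes Lemma 2.5 of [Mat05], which upgrades the uniqueness hypothesis to genuine convergence of the normalized blow-ups $\H^m(E\cap B(a,r))^{-1}T_{a,r\#}(\H^m\restrict E)\to c\lambda_a$ as $r\to 0$, and then reads off the cone condition by upper semicontinuity on the compact set $B(0,1)\setminus X(0,V_a,s)$; you instead argue by contradiction, extract a subsequential tangent measure charging that compact set, and let the flatness hypothesis kill it — this avoids the external citation at the cost of a routine weak-compactness extraction (justified, as you note, by $\Theta^{\ast m}(E,a)\le 1$). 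For the converse the paper first normalizes, via Remark 14.4(3) of [Mat95] and the two-sided density bounds, every tangent measure as $c\lim_i r_i^{-m}T_{a,r_i\#}(\H^m\restrict E)$ and then applies lower semicontinuity on $U(0,R)\setminus\overline{X(0,V_a,s)}$; you skip the normalization, work with arbitrary constants $c_i$, and use Lemma \ref{apprtandist} to force $c_ir_i^m\to\infty$, which the hypothesis $\Theta^m_{\ast}(E,a)>0$ converts into a blow-up of $c_iT_{a,r_i\#}\mu(B(0,R))$ contradicting local finiteness of $\nu$ — a clean way of making explicit exactly where the lower-density hypothesis enters, which the paper delegates to the cited remark. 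Both routes are sound; yours buys independence from the two auxiliary references, the paper's buys brevity and the stronger intermediate fact that the normalized blow-ups actually converge.
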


\begin{proof}
Suppose that there is an $m$-flat measure $\lambda_a$ as in the first statement. Let $0<s<1$. By Lemma 2.5 of \cite{Mat05} the assumption implies that for $\H^m$ almost all $a\in E$ there is $c>0$ such that $\frac{1}{\H^m(E\cap B(a,r))}T_{a,r\#}(\H^m\restrict E)$ converges weakly to $c\lambda_a$ as $r\to 0$.  
As $B(0,1)\setminus X(a,V_a,s)$ is compact we have 
\begin{align*}
&0= c\lambda_a(B(0,1)\setminus X(0,V_a,s))=\\
&\lim_{r\to 0}\frac{1}{\H^m(E\cap B(a,r))}T_{a,r\#}(\H^m\restrict E)(B(0,1)\setminus X(0,V_a,s)).\\
\end{align*}
Since $\H^m(E\cap B(a,r)\setminus X(a,V_a,s))=T_{a,r\#}(\H^m\restrict E)(B(0,1)\setminus X(0,V_a,s))$ and $\limsup_{r\to 0}r^{-m}
\H^m(E\cap B(a,r))<\infty$ for $\H^m$ almost all $a\in E$ by Theorem \ref{dens}, we obtain for such $a$,
$$\lim_{r\to 0}r^{-m}\H^m(E\cap B(a,r)\setminus X(a,V_a,s))=0,$$
as required.

For the converse statement, let $a\in E$ be such that $0<\Theta_{\ast}^m(E,a)\leq\Theta^{\ast m}(E,a)<\infty$ and let $\nu\in\tanm(\H^m\restrict E,a)$. As in the proof of Lemma \ref{tangraph} we have for some $c>0$ and some sequence $r_i>0$ with $\lim_{i\to\infty}r_i=0$, $\nu=c\lim_{i\to\infty}r_i^{-m}T_{a,r_i\#}(\H^m\restrict E)$. Then the lower semicontinuity of the weak convergence yields with similar arguments as above that $\nu(U(0,R)\setminus \overline{X(0,V_a,s)})=0$ for all $R>0, 0<s<1$. Hence $\spt\nu\subset V_a$.
\end{proof}



We have the Lebesgue density theorem for parabolic rectifiable sets, recall Lemma \ref{LebH}:

\begin{thm}\label{densthm}
Let $E\subset \Pn$ be $\H^m$ measurable and LG $m$-rectifiable with $\mathcal H^m(E)<\infty$.  Then for $\mathcal H^m$ almost all $p\in E$,
$$\Theta^m(E,p)=1\ \text{or}\ \Theta^m(E,p)=2^{-m}v(m).$$
\end{thm}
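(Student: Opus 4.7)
The plan is to exploit the defining feature of LG $m$-rectifiability---that the Lipschitz constant of the covering graphs may be taken \emph{arbitrarily} small---to squeeze the density of $E$ at typical points down to the ``flat'' value $2^{-m}p(V)$ for some $V\in P(n,m)$; Lemma \ref{LebH} then identifies the two possible numerical values.

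First I would fix a sequence $L_k\to 0$. For each $k$, LG $m$-rectifiability furnishes $(m,L_k)$-Lipschitz graphs $G^k_i=G_{g^k_i}$ over planes $V^k_i\in P(n,m)$ with $\H^m(E\setminus\bigcup_i G^k_i)=0$. By Theorem \ref{dens}(2) applied to the $\H^m$-measurable set $E\setminus G^k_i$, we have $\Theta^{*m}(E\setminus G^k_i,a)=0$ for $\H^m$-a.e.\ $a\in G^k_i$, so the upper and lower densities of $E$ and of $G^k_i$ at such $a$ coincide.

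Next I would estimate $\Theta^m(G,a)$ for a single $(m,L)$-Lipschitz graph $G=G_g$ over $V$, writing $A=P_V(G)$. By Remark \ref{graphrem}(1) the parametrisation $\phi(x)=x+g(x)$ is $\sqrt{L^2+1}$-bilipschitz from $A$ onto $G$, with inverse $P_V|_G$. Setting $x=P_V(a)$, the $1$-Lipschitz bound on $P_V$ and the $\sqrt{L^2+1}$-Lipschitz bound on $\phi$ give the inclusions
$$A\cap B(x,r/\sqrt{L^2+1})\subset P_V(G\cap B(a,r))\subset A\cap B(x,r)$$
and the measure comparison
$$\H^m(P_V(G\cap B(a,r)))\leq \H^m(G\cap B(a,r))\leq (L^2+1)^{m/2}\,\H^m(P_V(G\cap B(a,r))).$$
Since $\phi$ is bilipschitz it preserves $\H^m$-null sets, so Lemma \ref{Lebdens} applies at $x=P_V(a)$ for $\H^m$-a.e.\ $a\in G$, giving $\Theta^m(A,x)=2^{-m}p(V)$. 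Dividing the combined estimate by $(2r)^m$ and letting $r\to 0$ squeezes
$$(L^2+1)^{-m/2}\cdot 2^{-m}p(V)\leq \Theta^m_*(G,a)\leq \Theta^{*m}(G,a)\leq (L^2+1)^{m/2}\cdot 2^{-m}p(V).$$

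Combining the two steps, for $\H^m$-a.e.\ $a\in E$ and for each $k$ one has $a\in G^k_{i(k)}$ with
$$(L_k^2+1)^{-m/2}\cdot 2^{-m}p(V^k_{i(k)})\leq \Theta^m_*(E,a)\leq \Theta^{*m}(E,a)\leq (L_k^2+1)^{m/2}\cdot 2^{-m}p(V^k_{i(k)}).$$
Letting $k\to\infty$ forces $\Theta^m(E,a)$ to exist and to equal the common limit of $2^{-m}p(V^k_{i(k)})$, which by Lemma \ref{LebH} takes values only in $\{1,\,2^{-m}v(m)\}$. The main technical point to verify with care is the two-sided bilipschitz comparison of $G\cap B(a,r)$ with the intrinsic ball in $V$; once that is in hand, the remainder is a clean squeeze argument that relies essentially on the freedom to let $L\to 0$.
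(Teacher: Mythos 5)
Your proposal is correct and follows essentially the same route the paper takes: the bilipschitz comparison of the graph with its projection onto $V$, together with Theorem \ref{dens}(2) and Lemmas \ref{LebH} and \ref{Lebdens}, gives densities within a factor $(L^2+1)^{\pm m/2}$ of $2^{-m}p(V)$, and letting $L\to 0$ via LG rectifiability pins down the two admissible values. One small wording point: Theorem \ref{dens}(2) applied to $E\setminus G^k_i$ shows that the densities of $E$ agree with those of $E\cap G^k_i$ (not of $G^k_i$ itself, which may have infinite measure), so the Step~2 computation should be carried out for $E\cap G^k_i$, taking $A'=P_V(E\cap G^k_i)$ in place of $A$; the argument goes through verbatim.
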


\begin{proof}
If $g:A\to V^{\perp}, A\subset V\in P(n,m)$, is $L$-Lipschitz, then $x\mapsto x+g(x), x\in A,$ is $(L+1)$-bilipschitz, so it changes Hausdorff $m$ measure at most in ratio $(L+1)^m$. Hence the theorem follows from Theorem \ref{dens}(2) and Lemmas \ref{LebH} and \ref{Lebdens}, cf. the proofs of Lemma 3.2.17 and Theorem 3.2.19 in \cite{Fed69}.
\end{proof}

For horizontally rectifiable sets, see Definition \ref{horrect}, this is a special case of Kirchheim's theorem in general metric spaces, see \cite{Kir94}.

I don't know if the converse holds, or even if the analogue of Preiss's theorem \cite{Pre87} holds. That is, does the existence of positive and finite limit $\lim_{r\to 0}r^{-m}\H^m(E\cap B(p,r))$ almost everywhere imply parabolic rectifiability? In Euclidean spaces this is a very deep and difficult result, but recently Merlo succeeded in proving an analogue in Carnot groups in \cite{Mer19} and \cite{Mer20}. Maybe his methods could be adjusted to $\Pn$?


Now we complete part of the proof of Theorem \ref{main}.

\begin{proof}[Proof of (2) $\Longleftrightarrow \dots \Longleftrightarrow$ (5) in Theorem \ref{main}]
$(2) \Longrightarrow (3)$: Suppose that $E$ is LG $m$-rectifiable. Let $s_i>0, \lim_{i\to\infty}s_i=0$ and let $0<L_i<s_i$. Then for each $i$\ $E$ is $(m,L_i)$-rectifiable. By Theorem \ref{mLthm}(1) for every $i$ there is $E_i\subset E$ such that $\H^m(E\setminus E_i)=0$ and that for all $a\in E_i$ there is $V_i(a)\in P(n,m)$ for which
$$\lim_{r\to 0}r^{-m}\H^m(E\cap B(a,r)\setminus X(a,V_i(a),s_i))=0.$$
Let $a\in \cap_iE_i$ and choose a subsequence $V_{i_j}(a)$ of $V_i(a)$ converging to some $V\in P(n,m)$. If $s>0$ then for a sufficiently large $j$, 
$X(a,V_{i_j}(a),s_{i_j})\subset X(a,V,s),$
from which (3) follows.

$(3) \Longrightarrow (2)$: Suppose that $E$ has an approximate tangent $m$-plane at $\H^m$ almost all of its points. Let $L>0$ and $0<s<1$ such that $L>\tfrac{s}{\sqrt{1-s^2}}$. Then for $\H^m$ almost all $a\in E$ there is $V\in P(n,m)$ such that \eqref{M2eq} holds, whence by Theorem \ref{mLthm}(2) $E$ is $(m,L)$-rectifiable.

$(4) \Longrightarrow (3)$: This follows from Lemma \ref{apprtanlm}.

$(2)$ and  $(3) \Longrightarrow (4)$: Let $E$ be LG $m$-rectifiable. Then at $\H^m$ almost all points $a\in E$ by Lemma \ref{apprunique} $E$ has a unique approximate tangent $m$-plane $V_a$ and 
by Theorem \ref{densthm} the positive and finite density $\Theta^m(E,a)$ exists. It follows by the same proof as that of Corollary 14.9 of \cite{Mat95} in the Euclidean case that for $\H^m$ almost all $a\in E$ every $\nu\in\tanm(\H^m\restrict E,a)$ is $m$-uniform, that is, for some positive number $c$, $\nu(B(p,r))=cr^m$ for $p\in \spt\nu$ and for $r>0$. By Lemma \ref{apprtanlm} $\spt\nu\subset V_a$ and then by Lemma \ref{tangraph} $\spt\nu=V_a$. The uniqueness of uniform measures, see  Theorem 3.4 in \cite{Mat95}, implies that $\nu$ is $m$-flat.

$(4) \Longleftrightarrow (5)$: This is  Theorem \ref{M2}.

\end{proof}

Parabolic rectifiable sets split into horizontal and vertical parts.

\begin{df}\label{horrect}
A set $E\subset\R^n$ is horizontally (resp. vertically) LG \emph{$m$-rectifiable} if for every $0<L<\infty$ there are horizontal (resp. vertical) $(m,L)$-Lipschitz graphs $G_i, i=1,2,\dots,$ such that
$$\H^m(E\setminus\bigcup_{i=1}^{\infty}G_i)=0.$$
The horizontal and vertical $C^1G$ $m$-rectifiable sets are defined in the same way based on Definition \ref{Crectdf}.
\end{df}

Of course, according to this definition there are no  horizontally  LG $(n+1)$-rectifiable sets and no  vertically  LG $1$-rectifiable sets in $\Pn$. In particular, $E$ is LG $(n+1)$-rectifiable if and only if it is vertically LG $(n+1)$-rectifiable.

For subsets of $\R^{n}\times\{0\}$ the LG $m$-rectifiability is equivalent to the horizontal LG $m$-rectifiablity and to the Euclidean rectifiability, as well as for the subsets of countable unions of vertical translates of $\R^{n}\times\{0\}$.

\begin{pr}
Let $E\subset \Pn$ be $\H^m$ measurable and horizontally LG $m$-rectifiable with $\mathcal H^m(E)<\infty$. Then there is a set $T\subset\R$ such that $\mathcal L^1(T)=0$ and  $\H^m(E\setminus(\R^{n}\times T))=0$.
\end{pr}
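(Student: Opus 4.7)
The plan is to reduce the statement to Remark \ref{graphrem}(2), which says that the projection of any horizontal Lipschitz graph onto the $t$-axis has $\mathcal L^1$ measure zero. Since horizontal LG $m$-rectifiability gives a countable covering of $E$ (up to an $\H^m$-null set) by horizontal Lipschitz graphs, the desired null set $T$ should be built as the union of the $t$-projections of these graphs.

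More precisely, by the hypothesis applied with (say) $L=1$, there exist horizontal $(m,1)$-Lipschitz graphs $G_i = G_{g_i}$, $i=1,2,\dots$, over subsets $A_i$ of horizontal planes $V_i \in H(n,m)$, such that
\[
\H^m\Bigl(E \setminus \bigcup_{i=1}^{\infty} G_i\Bigr) = 0.
\]
Let $P:\Pn \to \R$ denote the projection $P(x,t) = t$. I would then set
\[
T = \bigcup_{i=1}^{\infty} P(G_i) \;\subset\; \R.
\]
By Remark \ref{graphrem}(2), each $P(G_i)$ has $\mathcal L^1$ measure zero (this is where the key $C|x-y|^2$-estimate for the last coordinate function of a horizontal Lipschitz map, plus the Sard-type statement cited there, is used); hence $\mathcal L^1(T) = 0$ by countable subadditivity.

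By construction, each $G_i$ is contained in $\Rn \times P(G_i) \subset \Rn \times T$, so
\[
E \setminus (\Rn \times T) \;\subset\; E \setminus \bigcup_{i=1}^{\infty} G_i,
\]
and the right hand side has $\H^m$ measure zero. There is really no obstacle here beyond correctly invoking Remark \ref{graphrem}(2); the whole content of the proposition is the combination of that remark with the definition of horizontal LG $m$-rectifiability.
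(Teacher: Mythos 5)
Your proof is correct and is exactly the argument the paper intends: the paper disposes of this proposition with the single line that it follows immediately from Remark \ref{graphrem}(2), and your write-up simply makes explicit the countable cover by horizontal Lipschitz graphs and the countable subadditivity of $\mathcal L^1$ applied to $T=\bigcup_i P(G_i)$.
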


This follows immediately from Remark \ref{graphrem}(2). We shall show in Example \ref{ex2} that we cannot always take $T$ to be countable.

The following theorem follows by the same arguments as Theorem \ref{main}, the equivalence of (1) to other conditions is still to come. 

\begin{thm}\label{main1}
Let $E\subset \Pn$ be $\H^m$ measurable and $\mathcal H^m(E)<\infty$. Then the following are equivalent:
\begin{itemize}
\item[(1)] $E$ is horizontally (resp. vertically) $C^1G$ $m$-rectifiable.
\item[(2)] $E$ is horizontally (resp. vertically) LG $m$-rectifiable.
\item[(3)] $E$ has a horizontal (resp. vertical) approximate tangent $m$-plane at $\H^m$ almost all of its points.
\item[(4)] For $\H^m$ almost all $a\in E$\ there is a horizontal (resp. vertical) $m$-flat measure $\lambda_a$ such that $\tanm(\H^m\restrict E,a)=\{c\lambda_a: 0<c<\infty\}$.
\end{itemize}
\end{thm}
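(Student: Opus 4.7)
The plan is to track the proof of the equivalences $(2)\Longleftrightarrow(3)\Longleftrightarrow(4)$ already established for Theorem \ref{main} and verify that the adjectives \emph{horizontal} and \emph{vertical} are preserved at every step. The key structural observation is that both $H(n,m)$ and $V(n,m)$ are closed subsets of the compact space $P(n,m)$, so any subsequential limit of planes belonging to one of them remains in that family. Combined with the fact that Lemma \ref{conegraph}(2) produces a Lipschitz graph over \emph{the same} plane $V$ that appears in the cone condition, this is enough to carry the distinction through the entire circle of implications.

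For $(2)\Longrightarrow(3)$ I would choose $L_i\downarrow 0$ and apply Theorem \ref{mLthm}(1) to the $(m,L_i)$-rectifiability of $E$ by horizontal (resp. vertical) graphs; the planes $V_i(a)$ thereby produced lie in $H(n,m)$ (resp. $V(n,m)$), and since this subfamily is closed any subsequential limit $V$ is again horizontal (resp. vertical). For $(3)\Longrightarrow(2)$ I would rerun the proof of Theorem \ref{mLthm}(2), paying attention to two spots. First, one splits $E$ into countably many pieces on which the cone condition holds with respect to a fixed $V\in P(n,m)$; when the approximate tangent planes are horizontal (resp. vertical) at $\H^m$-a.e.\ point, each reference $V$ can be chosen in $H(n,m)$ (resp. $V(n,m)$). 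Second, the reduction through Lemma \ref{M2lemma} to pure unrectifiability invokes Lemma \ref{conegraph}(2), which yields a Lipschitz graph over this fixed $V$; the graph is then horizontal (resp. vertical) by the same token, and one concludes that $E$ is horizontally (resp. vertically) $(m,L)$-rectifiable.

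The implication $(4)\Longrightarrow(3)$ is immediate from Lemma \ref{apprtanlm}: if $\lambda_a=c\H^m\restrict V_a$ with $V_a\in H(n,m)$ (resp. $V(n,m)$), then $V_a$ itself is the approximate tangent plane. For $(2),(3)\Longrightarrow(4)$ I would repeat verbatim the argument from the proof of Theorem \ref{main}: Lemma \ref{apprunique} gives a unique approximate tangent plane $V_a$, now horizontal (resp. vertical) by assumption; Theorem \ref{densthm} supplies the density; the Euclidean-style argument shows every $\nu\in\tanm(\H^m\restrict E,a)$ is $m$-uniform; Lemmas \ref{apprtanlm} and \ref{tangraph} together force $\spt\nu=V_a$; and the uniqueness of uniform measures (Theorem 3.4 of \cite{Mat95}) makes $\nu$ flat. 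Since $\spt\nu=V_a$ is horizontal (resp. vertical), $\nu$ is a horizontal (resp. vertical) $m$-flat measure.

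The only point demanding care, and the natural candidate for the main obstacle, is the step $(3)\Longrightarrow(2)$: one must ensure that the countable decomposition hidden inside the proof of Theorem \ref{mLthm}(2) can be performed with reference planes of the correct type and that the graphs extracted via Lemma \ref{conegraph}(2) inherit that type. Both issues are resolved by the closedness of $H(n,m)$ and $V(n,m)$ in $P(n,m)$ and by the fact that the plane of the cone and the plane of the resulting graph coincide; once this is checked, all arguments of Theorem \ref{main} localize to the horizontal or vertical setting without further change.
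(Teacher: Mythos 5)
Your argument for the chain $(2)\Longleftrightarrow(3)\Longleftrightarrow(4)$ is correct and matches the paper's intent exactly: the paper simply records that Theorem~\ref{main1} ``follows by the same arguments as Theorem~\ref{main},'' and your proposal fills in precisely the bookkeeping needed to see why. Your two pivotal observations — that $H(n,m)$ and $V(n,m)$ are closed (indeed clopen) in the compact space $P(n,m)$, so subsequential limits of planes stay in the same family, and that Lemma~\ref{conegraph}(2) produces a Lipschitz graph over the very plane $V$ appearing in the cone hypothesis, so the reduction in Lemma~\ref{M2lemma} preserves type — are exactly the right hinges. The use of Lemma~\ref{apprtanlm} for $(4)\Rightarrow(3)$ and the rerun of the uniform-measure argument for $(2),(3)\Rightarrow(4)$ also localize without incident.

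The one genuine gap is that the theorem has a fourth condition, $(1)$ (horizontal, resp.\ vertical, $C^1G$ rectifiability), which your proposal explicitly scopes out. The paper itself defers this equivalence to Section~\ref{Paradiff}, so your account of what can be said ``by the same arguments as Theorem~\ref{main}'' at the point where Theorem~\ref{main1} is stated is faithful to the paper; but a complete proof of the theorem must also establish $(1)\Longleftrightarrow(2)$ in the typed setting. For $(2)\Rightarrow(1)$ one needs to know that the $C^1$ graphs produced via Theorem~\ref{diffthm} and Lusin are over planes of the same type; the relevant ingredient in the proof of Theorem~\ref{diffthm} is Corollary~\ref{HVlm}, which shows that any $(m,\e)$-Lipschitz graph meeting $G_g$ in positive $\H^m$ measure lies over a plane $W$ of the \emph{same} type as $V$, whence (when $V$ is horizontal) the differential $\Lambda=P_{V^\perp}\circ(P_V|W)^{-1}$ has no $t$-component and the $C^1$ graph is horizontal. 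For $(1)\Rightarrow(2)$, Lemma~\ref{difflm} produces Lipschitz graphs over $W=G_{\Lambda_i}$, and since $\Lambda_i\colon V\cap H\to V^\perp$ maps only horizontal directions, $W$ is again horizontal (resp.\ vertical) when $V$ is. Adding these two remarks would close the proof.
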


Then we say that $E$ is horizontally (resp. vertically) parabolic $m$-rectifiable.

\begin{cor}
Let $E\subset \Pn$ be $\H^m$ measurable and parabolic $m$-rectifiable with $\mathcal H^m(E)<\infty$. Then there are a horizontally parabolic $m$-rectifiable set $E_1$ and a vertically parabolic $m$-rectifiable set $E_2$ such that $E=E_1\cup E_2$. The decomposition is unique up to $\H^m$ null-sets.
\end{cor}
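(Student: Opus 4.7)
The plan is to split $E$ according to the type (horizontal or vertical) of its approximate tangent plane and then invoke Theorem~\ref{main1}. Since $E$ is parabolic $m$-rectifiable, Theorem~\ref{main} implies that $E$ has an approximate tangent $m$-plane $V_a\in P(n,m)=H(n,m)\cup V(n,m)$ at $\H^m$-almost every $a\in E$, and by Lemma~\ref{apprunique} this plane is unique at almost every such point. Let $E_0\subset E$ be the full-measure subset consisting of points where a unique approximate tangent $m$-plane exists, and define
\[
E_1=\{a\in E_0:V_a\in H(n,m)\},\qquad E_2=\{a\in E_0:V_a\in V(n,m)\}.
\]
Then $E_1\cup E_2$ covers $\H^m$-almost all of $E$; adjoining the null set $E\setminus E_0$ to $E_1$ (say) gives $E=E_1\cup E_2$.

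The next step is to verify that $E_1$ is horizontally parabolic $m$-rectifiable and $E_2$ is vertically parabolic $m$-rectifiable. By construction, at $\H^m$-almost every point of $E_1$ the set $E_1$ inherits a horizontal approximate tangent $m$-plane (the approximate tangency condition involves $\H^m\restrict E\approx \H^m\restrict E_1$ near such points, using Theorem~\ref{dens}(2) applied to $E\setminus E_1$). Similarly for $E_2$. Then condition (3) of Theorem~\ref{main1} holds for $E_1$ in the horizontal case and for $E_2$ in the vertical case, so (2) of Theorem~\ref{main1} gives the desired rectifiability. A small measurability point to address is that $E_1$ and $E_2$ are $\H^m$-measurable; this follows because the set of points where an approximate tangent horizontal plane exists is Borel by a routine argument using the countable basis of $H(n,m)$ and the continuity properties of the map $r\mapsto r^{-m}\H^m(E\cap B(a,r)\setminus X(a,V,s))$.

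For uniqueness, suppose $E=E_1\cup E_2=E_1'\cup E_2'$ are two such decompositions; it suffices to show $\H^m(E_1\cap E_2')=0$ (and symmetrically). Being horizontally parabolic $m$-rectifiable, $E_1$ is covered up to an $\H^m$-null set by countably many horizontal $m$-Lipschitz graphs $G_i$; likewise $E_2'$ is covered up to a null set by countably many vertical $m$-Lipschitz graphs $G_j'$. Hence
\[
E_1\cap E_2'\subset\bigcup_{i,j}(G_i\cap G_j')\;\cup\;N,
\]
where $N$ is $\H^m$-null, and Corollary~\ref{HVlm} gives $\H^m(G_i\cap G_j')=0$ for every $i,j$. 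Thus $\H^m(E_1\cap E_2')=0$, as required.

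The main technical obstacle is the clean transfer of an approximate tangent plane from $E$ to the subset $E_j$: one must check that restricting to $E_1$ (respectively $E_2$) does not destroy the tangency property. This is handled using Theorem~\ref{dens}(2), which forces the ``removed'' part $E\setminus E_j$ to have vanishing upper $m$-density at almost every point of $E_j$, so condition \eqref{apptaneq} for $E$ transfers to $E_j$ via Lemma~\ref{apprtandist}. The rest of the argument is routine bookkeeping.
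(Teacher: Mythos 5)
Your argument is correct and is essentially the derivation the paper intends (it states the corollary without proof, as an immediate consequence of Theorem \ref{main1} together with Corollary \ref{HVlm}): split $E$ by the type of its a.e.\ unique approximate tangent plane, apply Theorem \ref{main1}(3)$\Rightarrow$(2) to each piece, and get uniqueness from the fact that horizontal and vertical Lipschitz graphs meet in $\H^m$-null sets. One small simplification: the transfer of tangency from $E$ to $E_1$ (resp.\ $E_2$) needs no density argument at all, since for $E_1\subset E$ one has $\H^m(E_1\cap B(a,r)\setminus X(a,V,s))\leq \H^m(E\cap B(a,r)\setminus X(a,V,s))$, so the cone condition passes to subsets by monotonicity; Theorem \ref{dens}(2) is only needed if one wants the converse direction.
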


\section{Parabolic differentiability and  completion of the proof of Theorem \ref{main}}\label{Paradiff}

To define parabolic differentiability let $H=\Rn\times\{0\}\subset\Pn$.

\begin{df}\label{paradiff}
Let $A\subset V\in P(n,m)$ and $g:A\to V^{\perp}$. We say that $g$ is \emph{parabolically differentiable} at $p\in A$ if there is a linear map 
$\Lambda_p:V\cap H\to V^{\perp}$ such that with $p=(x,t), q=(y,u)$,
\begin{equation}\label{paradiffeq}
\lim_{q\to p,q\in A,q\neq p}\|g(q)-g(p)-\Lambda_p(y-x)\|/\|q-p\|=0.\end{equation}
We say that $g$ is \emph{continuously parabolically differentiable}, denoted $C^1$, if the map $p\mapsto\Lambda_p, p\in A,$ is continuous. Then $G_g$ is called an $m-C^1$ graph over $V$.
\end{df}

For any $g$ as above we denote $\tilde{g}(p)=p+g(p)$ for $p\in A$ so that $G_g=\tilde{g}(A)$.

When $V\in H(n,m)$, this is just ordinary differentiability except for the use of the parabolic metric. When $V\in V(n,m)$, we require that the differential $\Lambda_p$ depends only on the horizontal variables, which means that the approximating planes are vertical. The graph of $\Lambda_p$, of $\Lambda_p(x,t)$ when $V$ is vertical, is the ordinary tangent plane of $G_g$ immediately by \eqref{paradiff}.

Using this notion of differentiability Orponen proved in \cite{Orp19} a Rademacher theorem for regular parabolic functions, see Section \ref{uniform} for the meaning of 'regular'. We have the following Rademacher-type theorem for parabolic rectifiable Lipschitz graphs

\begin{thm}\label{diffthm}
Let $A\subset V\in P(n,m)$ and $g:A\to V^{\perp}$. If $A$ is $\H^m$ measurable, $g$ is Lipschitz and $G=G_g$ is LG $m$-rectifiable, then $g$ is parabolically differentiable at $\H^m$ almost all $p\in A.$
\end{thm}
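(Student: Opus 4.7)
The plan is to identify the parabolic differential $\Lambda_p$ with the approximate tangent plane $V_a$ of $G:=G_g$ at $a:=\tilde g(p):=p+g(p)$, viewed as a parabolic graph over $V$. Recall $\tilde g:A\to G$ is $\sqrt{L^2+1}$-bilipschitz by Remark \ref{graphrem}(1). By the equivalences $(2)\Leftrightarrow(3)\Leftrightarrow(4)$ of Theorem \ref{main} proved just above, together with Lemma \ref{apprunique}, at $\H^m$ almost every $a\in G$ there is a unique approximate tangent $m$-plane $V_a\in P(n,m)$ and the tangent measures of $\H^m\restrict G$ at $a$ are precisely the positive multiples of $\H^m\restrict V_a$. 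Pulling back through $\tilde g$, these conditions hold at $\H^m$ a.e.\ $p\in A$, and I further take $p$ to be a Lebesgue density point of $A$ in $V$ (Lemma \ref{Lebdens}) at which the set $A\setminus D$ of non-density points of $A$ satisfies $\Theta^{\ast m}(A\setminus D,p)=0$ (Theorem \ref{dens}(2)).

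First I extract $\Lambda_p$ from $V_a$. Since $G$ is an $(m,L)$-Lipschitz graph over $V$, Lemma \ref{conegraph}(1) gives $V_a\subset\overline{X(0,V,s)}$ for every $s>L$; a short cone computation (using that the $t$-axis lies in $X(0,V,s)$ iff $V$ is vertical, and that horizontal directions orthogonal to $V\cap H$ in $\R^n$ are excluded) forces $V_a$ to have the same horizontal/vertical type as $V$ and to be a parabolic graph over $V$, yielding a unique linear $\Lambda_p:V\cap H\to V^\perp$ with $V_a=\{x+\Lambda_p(P_{V\cap H}(x)):x\in V\}$. This $\Lambda_p$ is the candidate differential.

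The main analytic step is to upgrade the measure-theoretic estimate
$$\H^m(G\cap B(a,r)\setminus X(a,V_a,\epsilon))=o(r^m)$$
to the pointwise statement that $\tilde q\in X(a,V_a,\epsilon)$ whenever $q\in A$ is sufficiently close to $p$. I argue by contradiction via a blow-up. Suppose a bad sequence $q_k\to p$ in $A$ exists with $\|P_{V_a^\perp}(\tilde q_k-a)\|\ge\epsilon_0\|\tilde q_k-a\|$. Lipschitz continuity of $g$ keeps the bad inequality on an $A$-neighborhood of $q_k$, and $\Theta^{\ast m}(A\setminus D,p)=0$ lets me replace $q_k$ by density points $q_k'\in D$ with comparable positions and the same bad cone estimate. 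Setting $r_k=\|\tilde q_k'-a\|$ and $z_k=T_{a,r_k}(\tilde q_k')$, one has $\|z_k\|=1$ and $\|P_{V_a^\perp}(z_k)\|\ge\epsilon_0$; along a subsequence, $z_k\to z^*\notin V_a$. The weak convergence $c_k T_{a,r_k\#}(\H^m\restrict G)\to c\H^m\restrict V_a$ (with $c_k$ of order $r_k^{-m}$, by positive density of $G$ at $a$) forces the mass of any small ball around $z^*$ to vanish, while the density-point property of $q_k'$ transferred through $\tilde g$ places uniform positive mass near $z_k$, hence near $z^*$ for large $k$---a contradiction. Unpacking the resulting pointwise cone inclusion against the graph form of $V_a$ then gives exactly $\|g(q)-g(p)-\Lambda_p(y-x)\|=o(\|q-p\|)$, i.e., parabolic differentiability of $g$ at $p$.

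The hard part is precisely this upgrade: the tangent-measure hypothesis is inherently a statement about weak limits of measures, while parabolic differentiability is a pointwise condition in $q\in A$. Bridging the two requires Theorem \ref{dens}(2) to reduce to density points, Lipschitz propagation to transfer bad behavior from an arbitrary $q$ to a nearby density point, and the positive lower density of $G$ at those density points to manufacture a mass that the weakly vanishing tangent measure cannot accommodate away from $V_a$.
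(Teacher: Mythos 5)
Your route is genuinely different from the paper's. The paper never passes through the already-proved equivalences of Theorem \ref{main}: it builds the differential directly from the planes $V_{i,j}$ of the small-constant Lipschitz graphs furnished by LG rectifiability, via two quantitative comparisons (the increment estimate \eqref{diffeq} and the bound \eqref{diffeq2} showing that any plane carrying a small-constant graph meeting $G$ in positive measure is a graph over $V$ with controlled norm), then gets approximate differentiability on a set of full upper density and invokes the Federer-type ``approximate differentiability plus Lipschitz implies differentiability'' step. You instead take the approximate tangent plane $V_a$ at $a=\tilde g(p)$, read off $\Lambda_p$ from it, and upgrade the measure-theoretic cone condition to a pointwise one; your upgrade step plays exactly the role of the Federer lemma the paper cites. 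This is a viable and arguably cleaner plan (and there is no circularity, since the paper proves $(2)\Leftrightarrow(3)\Leftrightarrow(4)$ before Theorem \ref{diffthm}), but two of your justifications do not hold as written.

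First, the identification of $V_a$ as a graph over $V$. Lemma \ref{conegraph}(1) only gives $V_a\subset\overline{X(0,V,s)}$ for $s>L$, and your parenthetical exclusions of the $t$-axis and of horizontal directions orthogonal to $V\cap H$ are only exclusions from $X(0,V,s)$ when $s\leq 1$. The hypothesis allows any Lipschitz constant, and for $L\geq 1$ the cone $X(0,V,s)$, $s>L$, is all of $\Pn\setminus\{0\}$, so this step yields nothing; there is no rescaling reducing to small $L$ over the same $V$. The gap is repairable with Lemma \ref{tangraph}: at $\H^m$ a.e.\ $a$ the support of the tangent measure projects under $P_V$ onto a dense, hence (by linearity) all, of $V$, so $P_V|V_a$ is surjective; a dimension count then forces $V_a$ to have the same type as $V$ and $P_V|V_a$ to be bijective, giving $\Lambda_p=P_{V^\perp}\circ(P_V|V_a)^{-1}$ (which indeed factors through $P_{V\cap H}$ in the vertical case, and has finite parabolic operator norm since $V_a$ and $V$ have the same type). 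Second, in the blow-up you extract mass near $z_k$ from ``the density-point property of $q_k'$''; this is not uniform in $k$: being a density point of $A$ gives lower bounds only below a scale $\rho(q_k')$ which may be much smaller than $r_k$, so as stated the contradiction does not follow. The correct source of mass is the density of $A$ at $p$ itself: since $\H^m((V\setminus A)\cap B(p,Cr))=o(r^m)$ by Lemma \ref{Lebdens} and Theorem \ref{dens}, every ball $B(q_k,\eta\|q_k-p\|)\cap V$ is filled by $A$ up to an $o(\|q_k-p\|^m)$ error, so (after propagating the bad cone estimate to this ball using \eqref{norm} and the Lipschitz bound, and pushing forward by the expansion-bounded, non-contracting map $\tilde g$) one gets $\H^m(G\cap B(a,R_k)\setminus X(a,V_a,s))\gtrsim R_k^m$ with $s$ fixed, contradicting (3) directly --- no intermediate density points $q_k'$ and no weak convergence of measures are needed. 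With these two repairs, and a routine exhaustion of $A$ by bounded pieces so that Theorem \ref{main} can be applied to a set of finite measure, your argument goes through; the final unpacking of the cone inclusion into \eqref{paradiffeq} is correct as you describe it.
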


\begin{proof}

Let us first check the following: Let $W\in P(n,m)$ be such that $P_V:W\to V$ is bijective. Set $\Lambda=P_{V^{\perp}}\circ (P_V|W)^{-1}:V\to V^{\perp}$ and $\tilde{\Lambda}=(P_V|W)^{-1}:V\to W$, so that $W=\tilde{\Lambda}(V)$. If $h:B\to W^{\perp}, B\subset W,$ is $L$-Lipschitz and 
$\tilde{h}(q_1)=\tilde{g}(p_1)$ and $\tilde{h}(q_2)=\tilde{g}(p_2)$ for some $p_1,p_2\in A$ and $q_1,q_2\in B$, then 
\begin{equation}\label{diffeq}
\|g(p_1)-g(p_2)-\Lambda(p_1-p_2)\|\leq 4L\|\tilde{\Lambda}\|^2\|p_1-p_2\|,\end{equation}
provided $L\|\tilde{\Lambda}\|\leq 1/2$. Observe that if $V\in V(n,m)$, the values of $\Lambda$ can be written as $\Lambda(x,t)=(P_{V^{\perp}}(P_{V\cap H}|W\cap H)^{-1}(x)),0)$, so they depend only on $x$ as required in Definition \ref{paradiff}.

For $i=1,2$, set $p_i'=P_V(q_i)$. We have $p_i=P_V(\tilde{g}(p_i))=P_V(\tilde{h}(\tilde{\Lambda}(p_i')))$ and $p_i'=P_V(\tilde{\Lambda}p_i')$. Thus
\begin{align*}
&p_1-p_2= P_V(\tilde{h}(\tilde{\Lambda}(p_1')))-P_V(\tilde{h}(\tilde{\Lambda}(p_2')))\\
&=P_V(\tilde{\Lambda}(p_1'-p_2'))+ P_V(h(\tilde{\Lambda}(p_1'))-h(\tilde{\Lambda}(p_2')))=p_1'-p_2'+v,\\
\end{align*}
where $v=P_V(h(\tilde{\Lambda}(p_1'))-h(\tilde{\Lambda}(p_2')))$. 
 Hence if 
$L\|\tilde{\Lambda}\|\leq 1/2$, $\|v\|\leq L\|\tilde{\Lambda}\|\|p_1'-p_2'\|\leq \|p_1'-p_2'\|/2$ and $\|p_1'-p_2'\|\leq 2\|p_1-p_2\|$. Thus 
\begin{align*}
&\|g(p_1)-g(p_2)-\Lambda(p_1-p_2)\|=\|\tilde{g}(p_1)-\tilde{g}(p_2)-\tilde{\Lambda}(p_1-p_2))\|\\
&=\|\tilde{h}(\tilde{\Lambda}(p_1'))-\tilde{h}(\tilde{\Lambda}(p_2'))-\tilde{\Lambda}(p_1-p_2))\|\\
&=\|\tilde{\Lambda}(p_1'-p_2')+h(\tilde{\Lambda}(p_1'))-h(\tilde{\Lambda}p_2'))-\tilde{\Lambda}(p_1-p_2))\|\\
&=\|h(\tilde{\Lambda}(p_1'))-h(\tilde{\Lambda}(p_2'))-\tilde{\Lambda}(v)\|\leq L\|\tilde{\Lambda}\|\|p_1'-p_2'\|+L\|\tilde{\Lambda}\|^2\|p_1'-p_2'\|\\
&\leq L\|\tilde{\Lambda}\|(1+\|\tilde{\Lambda}\|)\|p_1'-p_2'\| \leq 2L\|\tilde{\Lambda}\|(1+\|\tilde{\Lambda}\|)\|p_1-p_2\|
\leq 4L\|\tilde{\Lambda}\|^2\|p_1-p_2\|,
\end{align*}
since $\|\tilde{\Lambda}\|\geq 1$.
The estimate \eqref{diffeq} follows from this.

Let $L$ be such that $G$ is an $(m,L)$-Lipschitz graph and choose $\e(L)>0$ such that $\tfrac{L(\e(L)+1)}{\sqrt{L^2+1}}+\e(L)<1$. Next we verify that if there is an $(m,\e(L))$-Lipschitz graph $G_h, h:B\to W^{\perp}, B\subset W\in P(n,m)$, for which $\H^m(G\cap G_h)>0$, then $P_V|W$ is bijective and
\begin{equation}\label{diffeq2}
\|(P_V)^{-1}|W\|\leq L',\end{equation}
where $L'$ depends only on $L$. Notice that by Corollary \ref{HVlm} $W$ is horizontal, if $V$ is, and vertical, if $V$ is.

Denote $\e=\e(L)$.  We may assume that $G_h\subset G$. Let $p,q\in B$. Then there are $p',q'\in A$ such that
$$p'+g(p')= p+h(p), q'+g(q')=q+h(q).$$
We have 
\begin{align*}
&\|h(p)-h(q)\|\leq \e\|p-q\|, \|g(p')-g(q')\|\leq L\|p'-q'\|,\\
&p-q=p'-q'+g(p')-g(q')-h(p)+h(q),\\
& P_{V^{\perp}}(p-q)=g(p')-g(q')-P_{V^{\perp}}(h(p)-h(q)).
\end{align*}
Hence
\begin{align*}
&\|p-q\|\geq\frac{1}{1+\e}\|p'-q'+g(p')-g(q')\|,\\
& \|P_{V^{\perp}}(p-q)\|\leq \|g(p')-g(q')\|+\e\|p-q\|,\\
&\|g(p')-g(q')\|\leq \frac{L}{\sqrt{L^2+1}}\|p'-q'+g(p')-g(q')\|.
\end{align*}
Therefore for $p,q\in B$,
\begin{equation}\label{diffeq1}
\|P_{V^{\perp}}(p-q)\|\leq\left(\frac{L(\e+1)}{\sqrt{L^2+1}}+\e\right)\|p-q\|.\end{equation}
As $B$ has positive Lebesgue measure in $W$, we can find with the help of the Lebesgue density theorem points $p_i$ in $B$ and $\lambda\in\R$ for which $\lambda(p_i-p_0)$ nearly form an orthonormal bases of $W$. This implies that \eqref{diffeq1} holds for all $p,q\in W$.  Then for $L'=(1-(\tfrac{L(\e+1)}{\sqrt{L^2+1}}+\e)^2)^{-1/2}$, we have 
$\|p\|\leq L'\|P_V(p)\|$ for $p\in W$ and our claim is proven.

Let $L_i>0$ with $\lim_{i\to\infty}L_i=0$ and $L_i<\e(L)$ where $\e(L)$ was defined above. By the LG rectifiability of $G$ there are $(m,L_i)$-Lipschitz graphs $G_{i,j}$ over closed subsets of $V_{i,j}\in P(n,m)$ such that $\H^m(G\setminus \cup_jG_{i,j})=0$ and $\H^m(G\cap G_{i,j})>0$, we may assume of course that $\H^m(G)>0$. 
Let $A_{i,j}=\tilde{g}^{-1}(G\cap G_{i,j})$. Then $\H^m(A\setminus \cup_jA_{i,j})=0$. The sets  $A_{i,j}$ are $\H^m$ measurable. Then for every $i$ and $j$ and for $\H^m$ almost all $p\in A_{i,j}$, $\Theta^{\ast m}(A\setminus A_{i,j},p)=0$. Hence for
$B_i=\cup_j\{p\in A_{i,j}: \Theta^{\ast m}(A\setminus A_{i,j},p)=0\}$
we have $\H^m(A\setminus B_i)=0$, and so $\H^m(A\setminus \cap_i B_i)=0$. 

Let $p\in \cap_i B_i$. Then there is a sequence $(A_i:=A_{i,j_i})$ such that $p\in A_i$ and $\Theta^{\ast m}(A\setminus A_i,p)=0$ for all $i$. Choose a decreasing sequence $(r_i)$ of positive numbers such that $\sum_{j>i}r_j^m<2r_{i+1}^m$ and 
$$\H^m((A\setminus A_i)\cap B(p,r))<r^m/i\ \text{for}\ 0<r\leq r_i.$$
Set 
$$B=\bigcup_{i=1}^{\infty}A_i\cap B(p,r_i)\setminus B(p,r_{i+1}).$$
If $r_{i+1}<r\leq r_i$, we have 
\begin{align*}
&\H^m((A\setminus B)\cap B(p,r))
\leq \H^m((A\setminus A_i)\cap B(p,r))+\sum_{j>i}\H^m((A\setminus A_j)\cap B(p,r_j))\\
&\leq r^m/i + \sum_{j>i}r_j^m/j \leq 3r^m/i.
\end{align*}
Hence $\Theta^{\ast m}(A\setminus B,p)=0.$

Let $V_i=V_{j_i}$. As $L_i<\e(L)$, $P_V|V_i:V_i\to V$ is bijective. Set $\Lambda_i=P_{V^{\perp}}\circ (P_V|V_i)^{-1}, \tilde{\Lambda}_i=(P_V|V_i)^{-1}$. By \eqref{diffeq2}, $\|\tilde{\Lambda}_i\|\leq L'$. Replacing $(\Lambda_i)$ by a subsequence we may assume that $\Lambda_i\to\Lambda$. For sufficiently large $i$, $L_i\|\tilde{\Lambda_i}\|\leq 1/2$  and we can apply \eqref{diffeq} to obtain
\begin{equation*}
\|g(q)-g(p)-\Lambda(q-p)\|\leq (4L_iL'^2+\|\Lambda_i-\Lambda\|)\|q-p\|)\ \text{for}\ q\in A_i.\end{equation*}
If $q\in B$, there is a unique $i$ such that $q\in A_{i}\cap B(p,r_i)\setminus B(p,r_{i+1}).$ Hence
\begin{equation*}
\|g(q)-g(p)-\Lambda(q-p)\|\leq \e(\|q-p\|)\|q-p\|\ \text{for}\ q\in B,\end{equation*}
where $\e(r)\to 0$ as $r\to 0$. This means that $g$ is parabolically approximately differentiable at $p$.  But approximate differentiability combined with Lipschitz condition implies differentiability. This is the content of Lemma 3.1.5 of \cite{Fed69} in the Euclidean setting. Exactly the same argument works in the parabolic case, so I omit it.
\end{proof}

\begin{df}\label{Crectdf}
A set $E\subset\Pn$ is called \emph{$C^1G$ $m$-rectifiable} if there are $m-C^1$ graphs $G_i$ such that $\H^m(E\setminus\cup_iG_i)=0$. 
\end{df}

We can now finish the proof of Theorem \ref{main}. We have left to show that (1) is equivalent to the other conditions.

\begin{proof}[Proof of theorem \ref{main}]
 
$(2) \Rightarrow (1)$: Let $G_{g_i}\subset E$ be $m$-Lipschitz graphs covering $\H^m$ almost all of $E$. By Theorem \ref{diffthm} each $g_i$ is parabolically differentiable almost everywhere. The map sending $p$ to the parabolic differential $\Lambda_p$ is easily seen to be Borel measurable. Hence by an application of Lusin's theorem $\H^m$ almost all of $G_{g_i}$ can be covered with countably many $m-C^1$ graphs, from which (1) follows. 

$(1) \Rightarrow (2)$: This follows from the following lemma.
\end{proof}

\begin{lm}\label{difflm}
If $A\subset V\in P(n,m)$ and $g:A\to V^{\perp}$ is parabolically differentiable in $A$, then $G_g$ is LG $m$-rectifiable.
\end{lm}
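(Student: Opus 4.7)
The plan is to fix an arbitrarily small $L>0$ and cover $G_g$ up to $\H^m$-null sets by countably many $(m,L)$-Lipschitz graphs, invoking Lemma~\ref{conegraph}(2) to turn a uniform cone condition into the graph condition. At each $p\in A$ the parabolic differential $\Lambda_p$ determines a candidate tangent plane $W_p\in P(n,m)$ of the same horizontal/vertical type as $V$, obtained as the graph of $\Lambda_p$ (extended trivially in the $t$-direction when $V$ is vertical). In the vertical case $W_p\supset T$ is automatically in $V(n,m)$; in the horizontal case one needs the $t$-component of $\Lambda_p$ to vanish, which holds at $\H^m$-almost every $p$ under the implicit finiteness hypothesis $\H^m(G_g)<\infty$, since any nonvanishing linear $t$-component of $\Lambda_p$ would force $G_g$ to be locally of parabolic Hausdorff dimension $m+1$ near $p$.

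Choose $0<s<1$ with $L>s/\sqrt{1-s^2}$ and auxiliary $\varepsilon,\eta>0$ with $2\varepsilon+\eta(1+\varepsilon)<s$. Using compactness of $P(n,m)$, cover it by finitely many $\eta$-balls with centres $W_1,\dots,W_N$ and set $A^i=\{p\in A:d(W_p,W_i)<\eta\}$. For each $i$ and $n\in\N$ let $A^i_n\subset A^i$ consist of those $p$ for which the differentiability estimate $\|g(q)-g(p)-\Lambda_p(y-x)\|\leq\varepsilon\|q-p\|$ holds uniformly for every $q\in A$ with $\|q-p\|\leq 1/n$ (here $p=(x,t)$, $q=(y,u)$); parabolic differentiability at $p$ gives $A^i=\bigcup_n A^i_n$. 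Finally cut each $A^i_n$ into countably many pieces $A^i_{n,j}$ of parabolic diameter at most $1/(2n)$ via a grid, so that $A=\bigcup_{i,n,j}A^i_{n,j}$ up to a null set.

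The geometric heart is a uniform cone estimate on a single piece. For $p,q\in A^i_{n,j}$ with $p\neq q$, decompose $\tilde g(q)-\tilde g(p)=w+u$ with $w=(q-p)+\Lambda_p(y-x)\in W_p$ and $\|u\|\leq\varepsilon\|q-p\|$. Because $P_V$ is $1$-Lipschitz and $g$ takes values in $V^\perp$, $\|q-p\|\leq\|\tilde g(q)-\tilde g(p)\|$, so $\|u\|\leq\varepsilon\|\tilde g(q)-\tilde g(p)\|$ and $\|w\|\leq(1+\varepsilon)\|\tilde g(q)-\tilde g(p)\|$. Since $W_p$ and $W_i$ are of the same type, $d(W_p,W_i)<\eta$ upgrades to the operator inequality $\|P_{W_p}-P_{W_i}\|\leq\eta$ on $\Pn$; together with the $1$-Lipschitz property of $P_{W_i}$ and the triangle inequality this yields
\[ d\bigl(\tilde g(q)-\tilde g(p),W_i\bigr)\leq(2\varepsilon+\eta(1+\varepsilon))\|\tilde g(q)-\tilde g(p)\|<s\|\tilde g(q)-\tilde g(p)\|. \]
Hence every pair in $\tilde g(A^i_{n,j})$ satisfies the cone condition of Lemma~\ref{conegraph}(2), so $\tilde g(A^i_{n,j})$ is an $(m,s/\sqrt{1-s^2})$-Lipschitz graph over $W_i$, a fortiori an $(m,L)$-Lipschitz graph. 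The countable family $\{\tilde g(A^i_{n,j})\}$ covers $G_g$ modulo a null set, and since $L>0$ was arbitrary, $G_g$ is LG $m$-rectifiable.

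The main obstacles I expect are the measurability of $p\mapsto\Lambda_p$ and of the sets $A^i_n$ (standard but requiring a density argument on $A$ in the spirit of the proof of Theorem~\ref{diffthm}), and the verification that $W_p\in P(n,m)$ in the horizontal case, which rests on the $\Lambda_p^T=0$ claim above and hence on the finite-measure setting in which the lemma is applied within Theorem~\ref{main}. The cone estimate itself is routine once the uniform modulus on each $A^i_{n,j}$ is in hand.
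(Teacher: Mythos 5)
Your proof takes essentially the same route as the paper's: decompose $A$ into countably many pieces on which the parabolic differential is nearly constant and the differentiability modulus is uniform, then verify the Lipschitz graph condition on each piece — you do this via the cone characterization of Lemma~\ref{conegraph}(2) and a finite $\eta$-cover of the compact space $P(n,m)$, while the paper carries out the equivalent projection computation directly after covering the space of linear differentials $\Lambda_k$. The concern you raise about the $t$-component of $\Lambda_p$ needing to vanish in the horizontal case is likewise implicit in the paper's argument (which requires $\|\Lambda_p\|$, the parabolic operator norm, to be finite in order to approximate $\Lambda_p$ by the $\Lambda_k$'s, and a nonzero $t$-component would force $\|\Lambda_p\|=\infty$), so this is not a defect peculiar to your version.
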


\begin{proof}
For every $p\in A$ there is $\Lambda_p:V\to V^{\perp}$ such that \eqref{paradiffeq} holds and $\Lambda_p(x,t)=\Lambda_p(y,t)$ for $x,y\in\Rn, t\in\R$. 
For each $0<L<1$ we can decompose $A$ into countably many sets $A_i$ such that for some $\Lambda_i:V\to V^{\perp}$, of the above type, 
$$\|g(q)-g(p)-\Lambda_i(q-p)\|\leq L\|q-p\|\ \text{for}\ p,q\in A_i.$$
To see this, choose first for every $p\in A$ a positive number $r(p)$ such that 
$$\|g(q)-g(p)-\Lambda_p(q-p)\|\leq L\|q-p\|/2\ \text{for}\ q\in A\cap B(p,r(p)).$$
Then $A$ is the union of the sets $B_i=\{p\in A:r(p)>1/i\}, i=1,2,\dots.$ Write each $B_i$ as the union of sets $B_{i,j}$ such that $d(B_{i,j})<1/i$.  Choose linear maps $\Lambda_k:V\cap H\to V^{\perp}, k=1,\dots,N,$ such that for every linear map $\Lambda:V\cap H\to V^{\perp}$ there is  $\Lambda_k$ for which $\|\Lambda-\Lambda_k\|<L/2$. Then $A$ is the countable union of the sets $A_{i,j,k}=\{p\in B_{i,j}: \|\Lambda_p-\Lambda_k\|<L/2\}$. This gives the required decomposition.

We shall show that each $G_{g|A_i}$ is an $(m,L(1-L^2)^{-1/2})$-Lipschitz graph, which will complete the proof.  Define 
$\tilde{\Lambda_i}(p)=p+\Lambda_i(p)$ for $p\in V$, $\tilde{h}=\tilde{g}\circ\tilde{\Lambda_i}^{-1}:\tilde{\Lambda_i}(A_i)\to G_g$ and 
$h=P_{W^{\perp}}\circ \tilde{h}:\tilde{\Lambda_i}(A_i)\to W^{\perp}$, where $W=\{\tilde{\Lambda_i}(p):p\in V\}=G_{\Lambda_i}$. Let 
$p',q'\in G_{g|A_i}, p'=\tilde{g}(p),q'=\tilde{g}(q), p,q\in A_i$. Then 
\begin{align*}
&\|P_{W^{\perp}}(p'-q')\|=\|P_{W^{\perp}}(\tilde{g}(p)-\tilde{g}(q)-\tilde{\Lambda_i}(p-q))\|\\
&=\|P_{W^{\perp}}(g(p)-g(q)-\Lambda_i(p-q))\|\leq L\|p-q\|\leq L\|p'-q'\|.
\end{align*}
Hence $G_{g|A_i}=G_h$ with $h=P_{W^{\perp}}\circ (P_W|G_{g|A_i})^{-1}$, which is $L(1-L^2)^{-1/2}$ Lipschitz.

\end{proof}

Notice that continuity of the differentials is not needed in Lemma \ref{difflm}.

\section{Euclidean rectifiability}\label{euclid}

Federer defined in 1947 a set $E\subset\R^{n}$ to be $m$-rectifiable (or countably $(\H^m,m)$ rectifiable according to his terminology) if there are Lipschitz maps $f_i:A_i\to\Rn, A_i\subset\R^m,$ such that $\H^m_E(E\setminus \cup_{i=1}^{\infty}f_i(A_i))=0$. We say then that $E$ is \emph{Euclidean $m$-rectifiable}. This is equivalent to covering with Lipschitz, or even $C^1$, graphs with small Lipschitz constants. 

In this section we answer the following question: what are the relations between parabolic and Euclidean rectifiability for sets $E$ for which both parabolic and Euclidean Hausdorff measure is positive and finite? There are two possibilities: $0<\H^{m}(E)<\infty$ and $0<\H_E^{m}(E)<\infty$ or $0<\H^{m+1}(E)<\infty$ and $0<\H_E^{m}(E)<\infty$. In the first case parabolic  rectifiability implies Euclidean by Theorem \ref{eucpr}, in the second case  Euclidean rectifiability implies parabolic by Theorem \ref{eucpr1}. The converse statements are false by Examples \ref{ex1}  and  \ref{ex3}.


We have the following very simple result:
 
\begin{thm}\label{eucpr}
If $E\subset\Pn$ is parabolic  $m$-rectifiable, then it is Euclidean $m$-rectifiable. Moreover, if $E$ is vertically parabolic  $m$-rectifiable, then the Euclidean Hausdorff dimension of $E$ is at most $m-1/2$. In particular, $\H^m_E(E)=0$. 
\end{thm}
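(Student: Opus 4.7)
The plan is to reduce to the LG $m$-rectifiable formulation from Theorem \ref{main}, split the set into horizontal and vertical parts using the Corollary after Theorem \ref{main1}, and then handle each part by a separate (essentially already established) observation.

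First, I would invoke Theorem \ref{main} to replace the hypothesis ``parabolic $m$-rectifiable'' by ``LG $m$-rectifiable''. The Corollary following Theorem \ref{main1} then gives a decomposition $E=E_1\cup E_2$ (up to $\H^m$ null-sets) with $E_1$ horizontally parabolic $m$-rectifiable and $E_2$ vertically parabolic $m$-rectifiable. Thus $\H^m$ almost all of $E_1$ is covered by countably many horizontal $m$-Lipschitz graphs $G_i^{(1)}$ over planes in $H(n,m)$, and $\H^m$ almost all of $E_2$ is covered by countably many vertical $m$-Lipschitz graphs $G_j^{(2)}$ over planes in $V(n,m)$. Note that by Lemma \ref{Hs} a set of parabolic $\H^m$ measure zero also has Euclidean $\H^m_E$ measure zero, so throwing away parabolic null-sets is harmless for Euclidean rectifiability.

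For $E_1$, I would appeal to Remark \ref{graphrem}(3): horizontal parabolic Lipschitz graphs are locally Euclidean Lipschitz graphs over $m$-planes in $\R^n\times\{0\}\subset\R^{n+1}$. Hence each $G_i^{(1)}$ is (locally) a Euclidean Lipschitz image of a subset of $\R^m$, so $E_1$ is Euclidean $m$-rectifiable in Federer's sense. For $E_2$, I would apply Lemma \ref{HVlm1} to each $G_j^{(2)}$, which says the Euclidean Hausdorff dimension of a vertical $m$-Lipschitz graph is at most $m-1/2$. Taking the countable union, the Euclidean Hausdorff dimension of $E_2$ is at most $m-1/2$, and in particular $\H^m_E(E_2)=0$. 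Any set of $\H^m_E$ measure zero is vacuously Euclidean $m$-rectifiable. Combining the two parts gives Euclidean $m$-rectifiability of $E$.

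The ``Moreover'' clause is immediate from the same argument restricted to the vertical case: if $E$ is vertically parabolic $m$-rectifiable, then $\H^m$ almost all of it is covered by a countable union of vertical $m$-Lipschitz graphs, each of Euclidean Hausdorff dimension at most $m-1/2$ by Lemma \ref{HVlm1}. Countable stability of Hausdorff dimension, together with the fact (Lemma \ref{Hs}) that a parabolic $\H^m$ null-set has Euclidean $\H^m$ measure zero and hence Euclidean Hausdorff dimension zero, yields $\dim_E E\le m-1/2$ and $\H^m_E(E)=0$.

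There is essentially no hard step: the work was already done in Lemma \ref{HVlm1}, Remark \ref{graphrem}(3), and the horizontal/vertical decomposition. The only point requiring minor care is the bookkeeping of the exceptional $\H^m$ null-sets when passing to $\H^m_E$, which is handled by the trivial inequality $\H^m_E\le\H^m$ from Lemma \ref{Hs}.
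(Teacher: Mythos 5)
Your argument is correct and is essentially the paper's own proof: horizontal Lipschitz graphs are (locally) Euclidean Lipschitz graphs, vertical ones have Euclidean Hausdorff dimension at most $m-1/2$ by Lemma \ref{HVlm1}, and parabolic $\H^m$ null sets are $\H^m_E$-null by Lemma \ref{Hs}. One small slip: for the exceptional null set $N$, $\H^m_E(N)=0$ gives only $\dim_E N\le m$, not Euclidean dimension zero, so your dimension bound $m-1/2$ really concerns the part of $E$ covered by the vertical graphs (the paper's proof is equally silent on this point), while the conclusions $\H^m_E(E)=0$ and Euclidean $m$-rectifiability are unaffected.
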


\begin{proof}
By Lemma \ref{Hs}, $\H^m_E(A)=0$ whenever $\H^m(A)=0$. Since horizontal  $m$-Lipschitz graphs are Euclidean $m$-Lipschitz graphs,  
horizontally parabolic  $m$-rectifiable sets are Euclidean $m$-rectifiable. 

The second statement follows from Lemma \ref{HVlm1}
\end{proof}

Euclidean $m$-rectifiable sets are characterized by the almost everywhere existence of approximate tangent planes, see \cite{Mat95}, Theorem 15.19. They are defined as in Definition \ref{m-apprtan} but $\H^m$ replaced by $\H^m_E$ and the parabolic cones replaced by the Euclidean cones 
$$X_E(a,V,s) =\{p\in\Pn: d_E(p-a,V)<s|p-a|\},$$
when $V\in G(n+1,m)$, the Grassmannian of linear $m$-dimensional subspaces of $\R^{n+1}$.

In the following theorem $H=\Rn\times\{0\}$.

\begin{thm}\label{eucpr1}
If $E\subset\Pn$ is $\H_E^m$ measurable and Euclidean $m$-rectifiable, then it is parabolic  $(m+1)$-rectifiable. Moreover, if $\H_E^{m}(E)<\infty$, 
\begin{itemize}
\item[(1)] if for $\H_E^m$ almost all $a\in E$ the Euclidean approximate tangent plane of $E$ at $a$ is not contained in $H$, then $E$ is vertically parabolic  $(m+1)$-rectifiable,
\item[(2)] if for $\H_E^m$ almost all $a\in E$ the Euclidean approximate tangent plane of $E$ at $a$ is contained in $H$, then $\H^{m+1}(E)=0.$
\end{itemize} 
\end{thm}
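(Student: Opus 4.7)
The plan is to prove (1) and (2) first and deduce the main assertion by decomposition. By the Euclidean analogue of Theorem 15.19 in \cite{Mat95}, Euclidean $m$-rectifiability produces Euclidean approximate tangent $m$-planes $V_a$ at $\H^m_E$-a.e.\ $a \in E$. Write $E = E_0 \cup E_1$ with $E_0 = \{a : V_a \subset H\}$ and $E_1 = \{a : V_a \not\subset H\}$, and reduce to $\H^m_E(E) < \infty$ by $\sigma$-finiteness of $\H^m_E$ on Euclidean rectifiable sets. Then (2) gives $\H^{m+1}(E_0) = 0$ and (1) gives vertical parabolic $(m+1)$-rectifiability of $E_1$, so $E = E_0 \cup E_1$ is parabolic $(m+1)$-rectifiable.

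For (2) I would run a cone-covering argument. Fix $\e > 0$. After a standard partition making the tangent estimate uniform, I may assume that for all $a \in E_0$ and all $r \leq r_0$,
\[\H^m_E(E \cap B_E(a, r) \setminus X_E(a, V_a, \e)) \leq \e r^m.\]
Since $V_a \subset H$, the cone-slice $X_E(a, V_a, \e) \cap B_E(a, r)$ lies in a Euclidean box of extent $2r$ along $V_a$, at most $2\e r$ perpendicular to $V_a$ in $H$, and at most $2\e r$ in $T$. A single parabolic ball of radius $\rho = \sqrt{\e r}$ absorbs the perpendicular-$H$ and $T$-extents (since $\rho \geq \e r$ and $\rho^2 = \e r$), while at most $\sim (r/\e)^{m/2}$ such balls cover the $V_a$ direction, giving an $\H^{m+1}_\delta$-contribution $\lesssim \e^{1/2} r^{m+1/2}$ per cone-slice. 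Taking a Vitali cover of $E_0$ by Euclidean balls $B_E(a_j, r_j)$ with $r_j \leq r_0$ and $\sum_j r_j^m \lesssim \H^m_E(E_0)$, and bounding the leftover ``bad'' mass (of total Euclidean mass $\lesssim \e \sum_j r_j^m$) by Lemma \ref{Hs}, I obtain
\[\H^{m+1}_\delta(E_0) \lesssim \e^{1/2} r_0^{1/2} \H^m_E(E_0) + C\e\, \H^m_E(E_0).\]
Letting $r_0 \to 0$ (via $\delta \to 0$, since $2\rho < \delta$ requires $r_0 < \delta^2/(4\e)$) and then $\e \to 0$ gives $\H^{m+1}(E_0) = 0$.

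For (1), at a.e.\ $a \in E_1$ the intersection $V_a \cap H$ is an $(m-1)$-plane in $H$, and I associate the vertical parabolic $(m+1)$-plane $V'_a := (V_a \cap H) + T \in V(n, m+1)$. Decompose $E_1$ by a uniform lower bound $|t_0| \geq 1/k$ on the $t$-component of the unit vector in $V_a \cap (V_a \cap H)^\perp$. By Euclidean rectifiability, cover each piece by Euclidean $L$-Lipschitz graphs $G$ over Euclidean $m$-planes $V$ with $L$ arbitrarily small, and by a.e.\ uniqueness of Euclidean tangents take $V = V_a$, so the angle bound persists. For $L$ small relative to $|t_0|$ the Euclidean projection $P_{V'}|G$ is a bilipschitz bijection onto its image, so $G$ is a Euclidean Lipschitz graph over $V' \in V(n, m+1)$, hence by Remark \ref{graphrem}(3) a locally vertical parabolic Lipschitz graph. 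The main obstacle is the quantitative estimate on the parabolic Lipschitz constant of the reparametrization $g': A' \to (V')^\perp$: the Euclidean constant of $g'$ stays bounded below by the tilt $|v_0|$ (independent of $L$), but $(V')^\perp \subset H$ forces this tilt to be horizontal, and when $p' - q' \in V'$ has $T$-component $s - s'$ the tilt contributes $\sim |s-s'|\,|v_0|/|t_0|$ to $g'(p') - g'(q')$; dividing by $\|p' - q'\| \geq \sqrt{|s-s'|}$ yields the sub-Lipschitz factor $\sqrt{|s-s'|}\,|v_0|/|t_0|$, small on small parabolic pieces. Together with the direct contribution from $g$, which is parabolically of order $\sqrt{L/|t_0|}\,\|p'-q'\|$, the parabolic Lipschitz constant can be made $< \e$ on sufficiently small pieces, yielding vertical parabolic $(m+1, \e)$-rectifiability of $E_1$ for every $\e > 0$.
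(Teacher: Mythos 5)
Your proposal is correct, and both halves are organized as in the paper (prove (1) and (2) for $\H^m_E(E)<\infty$, then get the first claim by decomposing along the tangent direction and using $\sigma$-finiteness plus Lemma \ref{Hs} for the null leftovers), but the proofs of (1) and (2) themselves take genuinely different routes. For (2), the paper also runs a Vitali argument with slabs $\{|t-\tau|\leq\e r\}$, but it covers each slab by $\sim\e/r_i$ parabolic balls of radius $r_i$ and controls their $\H^{m+1}$-mass through the upper density bound of Theorem \ref{dens} on a large subset $F$; you instead cover the cone part by parabolic balls of the smaller radius $\sqrt{\e r}$ and estimate the premeasure $\H^{m+1}_\delta$ directly, which gives the same bound $\H^{m+1}(E_0)\lesssim\e\H^m_E(E_0)$ without invoking the parabolic density theorem -- a slightly more self-contained count (your bookkeeping needs, as the paper's does, the lower density bound $\H^m_E(E\cap B_E(a_j,r_j))\gtrsim r_j^m$ at the chosen radii to get $\sum_j r_j^m\lesssim\H^m_E(E_0)$; make that explicit). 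For (1), the paper never re-graphs anything: it covers $E$ by Euclidean $C^1$ submanifolds, proves the cone-comparison Lemma \ref{euccone} ($X_E(a,V,s^2)\cap B(a,r(s))\subset X(a,W,s)$ with $W=(V\cap H)+T$), deduces that $W$ is a parabolic approximate tangent $(m+1)$-plane $\H^{m+1}$-a.e., and then invokes the equivalence (3)$\Rightarrow$(2) of Theorem \ref{main1}; you work directly at the level of graphs, showing that a Euclidean $L$-Lipschitz graph over a plane $V$ with tilt $|t_0|$ bounded below becomes, on pieces of small parabolic diameter, a vertical parabolic Lipschitz graph over $(V\cap H)+T$ with constant $<\e$, which yields vertical LG $(m+1)$-rectifiability without any appeal to the tangent-plane characterization. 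The geometric mechanism is the same (the $\sqrt{|t|}$ scaling kills a bounded horizontal tilt at small scales, which is exactly what Lemma \ref{euccone} encodes), but your version buys a quantitative, constructive covering by small-constant graphs, at the cost of more computation, while the paper's is shorter because Theorem \ref{main1} absorbs the work. Two small points to tighten: you cannot literally ``take $V=V_a$'' since the base plane of a covering graph is fixed while $V_a$ varies -- what you actually need, and what your argument uses, is only that the base plane inherits the tilt bound $\gtrsim 1/k-CL$; and your stated sub-Lipschitz factors ($\sqrt{L/|t_0|}$, $\sqrt{|s-s'|}\,|v_0|/|t_0|$) are only correct up to the precise exponents -- a careful split of cases (variation dominated by the $V\cap H$-direction versus the tilted direction) gives bounds of the form $C\bigl(L/|t_0|+\sqrt{\mathrm{diam}/|t_0|}\bigr)$, which still tend to $0$ as $L$ and the piece diameter shrink, so the conclusion stands.
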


The proof is based on the following lemma:

\begin{lm}\label{euccone}
Let $0<m<n+1$ and $V\in G(n+1,m)$ such that $V\not\subset H$. 
Let $W=\{(v,t):v\in V\cap H,t\in\R\}$. Then there is $s(V), 0<s(V)<1,$ such that for $0<s<s(V)$ there is $r(s)>0$ for which
$$X_E(a,V,s^2)\cap B(a,r(s))\subset X(a,W,s)\ \text{for all}\ a\in \Pn.$$
\end{lm}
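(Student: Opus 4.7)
My plan is as follows. By translation invariance of $X_E(\cdot,V,s^2)$, $X(\cdot,W,s)$, and $B(\cdot,r)$, it suffices to prove the inclusion for $a=0$. Since $V\not\subset H$, the subspace $V_0:=V\cap H$ has Euclidean dimension $m-1$, and I decompose $V=V_0\oplus\R e$ with $e=(e_x,e_t)$ a unit vector Euclidean-orthogonal to $V_0$ in $\R^{n+1}$. Two elementary facts will be crucial: first, $e_t\neq 0$, for otherwise $e\in V_0\cap V_0^\perp=\{0\}$; second, $e_x\perp V_0$ in $\Rn$, because $V_0\subset H$ means the condition $e\perp(v_0,0)$ in $\R^{n+1}$ reduces to $e_x\cdot v_0=0$. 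Set $C:=|e_x|/|e_t|$, a constant depending only on $V$.

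Next, $W=V_0+T$ is a vertical subspace whose parabolic orthogonal complement $W^\perp$ is the Euclidean orthogonal complement of $V_0$ taken inside $H$. By the projection formulas of Section \ref{Preli}, for $p=(x,t)\in\Pn$ one has $\|P_{W^\perp}(p)\|=|Qx|$, where $Q$ denotes Euclidean projection in $\Rn$ onto $V_0^\perp$, so that $p\in X(0,W,s)$ is just the inequality $|Qx|<s\|p\|$. Now let $p\in X_E(0,V,s^2)$ and write $q=(y,u)$ for its Euclidean projection onto $V$, so $|p-q|<s^2|p|$. Decomposing $q=v_0+\lambda e$ with $v_0\in V_0$ and $\lambda\in\R$, the orthogonality $e_x\perp V_0$ collapses $Qy$ to $\lambda e_x$, and hence
$$|Qy|=|\lambda||e_x|=\tfrac{|e_x|}{|e_t|}|\lambda e_t|=C|u|.$$
Combined with $|u|\le|t|+|p-q|$ and $|Qx|\le|Qy|+|x-y|\le|Qy|+|p-q|$, this gives
$$|Qx|\le C|t|+(C+1)s^2|p|.$$
Provided $\|p\|\le 1$ one has $|t|\le\|p\|^2$ and $|p|\le\|p\|$ (the latter because $t^2\le|t|$ when $|t|\le 1$), so
$$\|P_{W^\perp}(p)\|\le C\|p\|^2+(C+1)s^2\|p\|.$$

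I will finish by taking $s(V):=\min\{1,1/(2(C+1))\}$ and $r(s):=\min\{1,s/(2C)\}$, which for $0<s<s(V)$ and $\|p\|<r(s)$ makes each of the two summands strictly less than $s\|p\|/2$, giving $p\in X(0,W,s)$. The main point to monitor is the production of the term $C|t|\le C\|p\|^2$, which is of parabolic order $\|p\|^2$ rather than $\|p\|$: it arises because the $t$-coordinate of $q\in V$ equals $\lambda e_t$, allowing $|\lambda|$ to be read off from $|u|$, while the horizontal contribution $Qy$ collapses to $\lambda e_x$ thanks to $e_x\perp V_0$. This step genuinely uses the hypothesis $V\not\subset H$, which is what guarantees $e_t\neq 0$; without it $W$ would not even be $(m+1)$-dimensional in the parabolic sense, and one would merely get a bound of the form $|Qy|=O(|y|)=O(\|p\|)$, which cannot beat $s\|p\|$.
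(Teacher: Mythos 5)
Your argument is correct and rests on the same essential decomposition the paper uses: write $V=V_0\oplus\R e$ with $V_0=V\cap H$ and $e=(e_x,e_t)$, observe $e_t\neq 0$ and $e_x\perp V_0$, note that the relevant parabolic quantity is $|Qx|$ where $Q$ projects onto $V_0^\perp$ in $\Rn$, and exploit the parabolic scaling $|t|\le\|p\|^2$ so that the contribution coming from the $e$-direction is of order $\|p\|^2$ rather than $\|p\|$. The difference is in the bookkeeping: the paper splits into two cases according to whether $|\lambda|\le s|v|/2$ or not and produces a collection of ad hoc numerical thresholds, whereas your single constant $C=|e_x|/|e_t|$ together with the identity $|Qy|=C|u|$ lets you run one uninterrupted chain of inequalities, which is cleaner and makes the role of the hypothesis $V\not\subset H$ (i.e.\ $e_t\neq 0$) more transparent. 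One small oversight: if $e_x=0$ then $C=0$ and your formula $r(s)=\min\{1,s/(2C)\}$ divides by zero; in that degenerate case $V=V_0+T=W$, the term $C\|p\|^2$ vanishes identically, and any $r(s)\le 1$ works, so the fix is trivial, but the definition of $r(s)$ should be adjusted (e.g.\ $r(s)=\min\{1,s/(2C+2)\}$ or a separate clause for $C=0$) to make the statement literally correct for all admissible $V$.
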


\begin{proof}
We may assume $a=0$. Let $e=(e_1,e_2)\in V, e_2\in\R,$ with $|e|=1$ be orthogonal to $V\cap H$. Then $e_2\neq 0$. Let $p\in X_E(0,V,s^2)$. Then we can write 
$p=v+\lambda e+q$, where $v\in V\cap H, \lambda\in\R, q\in V^{\perp}$ with $q=(x,t), x\in (V\cap H)^{\perp}\subset\Rn$ and $|q| < s^2|p|.$ We need to show that when $\|p\|$ is small enough, then $|P_{W^{\perp}}(p)|<s\|p\|$, or equivalently
\begin{equation}\label{eocconeeq}
|\lambda e_1+x| < s\sqrt{|v|^2+|\lambda e_1+x|^2+|\lambda e_2+t|}.
\end{equation}
If $s<1/2$, the inequality $|q| < s^2|p|$ implies $|q| < 2s^2|v+\lambda e|$ which  means that
\begin{equation}\label{eocconeeq1}
\sqrt{|x|^2+t^2} < 2s^2\sqrt{|v|^2+\lambda^2}.
\end{equation}
Suppose that  $s<1/8$ and $|\lambda|\leq s|v|/2.$ Then $|x|\leq 4s^2|v|<s|v|/2$ and 
$$|\lambda e_1+x|\leq |\lambda|+|x|<s|v|,$$
from which \eqref{eocconeeq} follows. Suppose then that $s|v|\leq 2|\lambda|$. Then by \eqref{eocconeeq1} 
$|x|\leq 6s|\lambda|<|\lambda|$ and $|t|\leq 6s|\lambda|<|\lambda e_2|/2$ if  $s<|e_2|/12$. Hence if 
 $|\lambda|< s^2|e_2|/16$,
\begin{equation*}
|\lambda e_1+x|\leq 2|\lambda|\leq s\sqrt{|\lambda e_2|}/2\leq s\sqrt{|\lambda e_2+t|}
.\end{equation*}
 Hence \eqref{eocconeeq} follows also in this case.
\end{proof}

\begin{proof}[Proof of Theorem \ref{eucpr1}]
The first claim follows from (1) and (2).  Recall from Lemma \ref{Hs} that $\H^{m+1}\lesssim \H_E^{m}$, in particular, $\H^{m+1}(A)=0$ whenever $\H_E^{m}(A)=0$. 

Let $E$ be as assumed in (1). By Theorem 3.2.29 in \cite{Fed69} $\H^m_E$ almost all of $E$ can be covered with countably many $m$-dimensional $C^1$ submanifolds $M_i$ of $\R^{n+1}$. Each of them has a classical tangent plane $V_i(a)\in G(n+1,m)$ at all $a\in M_i$, which implies that for all $0<s<1$,
$$M_i\cap B(a,r)\setminus X_E(a,V_i(a),s^2)=\emptyset$$
for sufficiently small $r>0$. Then by the Euclidean analogue of Theorem \ref{dens},  $V_i(a)$ is a Euclidean approximate tangent plane of $E$ for $\H^m_E$ almost all $a\in E\cap M_i$, so $V_i(a)\not\subset H$ by assumption. Thus the vertical plane $W_i(a)$ related to $V_i(a)$ as in Lemma \ref{euccone} is a parabolic approximate tangent plane of $E$ for $\H^{m+1}$ almost all $a\in E\cap M_i$, again also with the help of Theorem \ref{dens}. Then (1) follows from Theorem \ref{main}.

Let $E$ be as assumed in (2). Let $\e>0$. By Theorem \ref{dens} there are $r_0>0$ and  $F\subset E$ such that   $\H^{m+1}(E\setminus F)<\e$ and
\begin{equation}\label{eucpreq}
\H^{m+1}(E\cap B)<5d(B)^{m+1}\ \text{for all parabolic balls}\ B\ \text{with}\ B\cap F\neq\emptyset\ \text{and}\ d(B)<r_0.
\end{equation}
For $\H^m_E$ almost all $a=(\a,\tau)\in E$ there are arbitrarily small $r>0$ such that
\begin{equation}\label{eucpreq1}
 H^m_E(E\cap B_E(a,r)\setminus H(a,\e,r))<\e r^m\ \text{and}\ \H^m_E(E\cap B_E(a,r))>r^m,
\end{equation}
where $H(a,\e,r)=\{(x,t):|x-\a|\leq r, |t-\tau|\leq \e r\}$. By Vitali's covering theorem for $\H^m_E$, see, , e.g., Theorem 2.8 in \cite{Mat95}), there are disjoint balls $B_E(a_i,r_i), a_i\in E,\ 0<r_i<\e,$ and $2r_i<r_0$, satisfying \eqref{eucpreq1}  such that  $\H^m_E(E\setminus \cup_iB_E(a_i,r_i))=0$. Each $B_E(a_i,r_i)\cap H(a_i,\e,r_i)$ can be covered with $N_i\lesssim \e/r_i$ parabolic balls $B_{i,j}$ of radius $r_i$. Then by \eqref{eucpreq} and \eqref{eucpreq1},
\begin{align*}
&\H^{m+1}(E)\leq\H^{m+1}(F)+\e\\
&\leq 5\sum_{i}\sum_{j=1}^{N_i}d(B_{i,j})^{m+1} + \sum_{i}\H^{m+1}(E\cap B_E(a_i,r_i)\setminus H(a_i,\e,r_i)))+\e\\
&\lesssim \sum_{i}N_ir_i^{m+1} + \sum_{i}\e r_i^m+\e \lesssim \e\sum_{i} r_i^m+\e\\ 
&\leq \e\sum_{i}\H^m_E(E\cap B_E(a_i,r_i))+\e\leq \e\H_E^m(E)+\e.
\end{align*}
Hence (2) follows.
\end{proof}

\section{Other notions of rectifiability}\label{Other}

\subsection{Uniform rectifiability}\label{uniform}

In Euclidean spaces uniformly rectifiable sets were introduced by David and Semmes in the 1990s, see \cite{DS93}. They have turned out to be more appropriate than the ordinary rectifiable sets for many topics in analysis, in particular, in harmonic analysis and partial differential equations. Although in many cases the converse is true, too. Because of this several people have been motivated to develop the corresponding theory in the parabolic setting. So far this has only been done in codimension 1, which is the most natural setting for such applications. Parabolic uniform rectifiability was defined by Hofmann, Lewis and Nystr\"om in \cite{HLN03} and \cite{HLN04}  in terms of $\beta_2$s, that is, uniform $L^2$ approximation by vertical planes. In \cite{BHHLN20} this has been shown to be equivalent to a 'big pieces of big pieces of regular vertical Lipschitz graphs' result, which means a quantitative local approximation by regular vertical Lipschitz graphs. But vertical Lipschitz graphs as defined in \ref{lipgraph} are not the right class. They need not be uniformly rectifiable. This was shown at the end of \cite{HLN03} based on an example in \cite{LS88}, see also the Observation 4.19 in \cite{BHHLN20}. Regularity of a Lipschitz graph means that the half-derivative with respect to $t$ satisfies a BMO condition. 

See \cite{BHHLN20} and \cite{BHHLN21} for some of the latest results, surveys on earlier results and further references.

For the precise definition of uniform rectifiability, see Definitions 2.20, 2.21 and 4.1 in \cite{BHHLN20} or Definition 2.19 in \cite{BHHLN21}. From these it follows that Example \ref{ex} is another example of an ordinary parabolic Lipschitz graph which is not uniformly rectifiable; use the estimates $\H^{n+1}(G\cap B(a,2r)\cap \{q:d(q-a,V)\geq sr\})\gtrsim r^{n+1}$ for $V\in V(n,n+1), a\in G, 0<r<1$.

In light of the above mentioned results on uniform rectifiability, a definition of $(n+1)$-rectifiable sets in $\Pn$ could be given using regular vertical Lipschitz graphs in place of vertical $C^1$ graphs, or vertical Lipschitz graphs with small Lipschitz constants. Mateu, Prat and Tolsa suggested that notion of rectifiability in \cite{MPT20}, page 4.  Orponen proved in Theorem 3.10 of \cite{Orp19} Rademacher's theorem  for regular Lipschitz functions from which it follows that rectifiable sets based on regular vertical Lipschitz graphs are $C^1G$ rectifiable. Example \ref{ex4} will show that the converse is false. In terms of approximation by planes, the difference between these possible concepts of rectifiability seems to be that while the parabolic rectifiability of this paper can be decided by looking at each single small scale separately the one based on regular graphs might require multiscale approximation.

Since parabolic $C^1$ and LG rectifiability can be characterized with approximate tangent planes, it would seem reasonable to expect that there could be quantitative versions of these results, that is, one type of uniform rectifiability. If so, would it agree with that of \cite{HLN03} and \cite{HLN04}? My guess is that it would not.

\subsection{Metric space rectifiability}\label{metric}
In \cite{AK00} Ambrosio and Kirchheim used the same definition as Federer to define rectifiable sets in general metric spaces $X$. They proved the characterization by approximate tangent planes in this setting, see Theorem 6.3 in \cite{AK00}. Kirchheim had already proved a substitute for Rademacher's theorem in \cite{Kir94} and using it the analogue of Theorem \ref{densthm}. There need not be any planes in $X$ but $X$ can be embedded isometrically into a Banach space, which gives a linear structure. Theorem 6.3 in \cite{AK00} is stated with a positive lower density condition, but it holds without it. To see this one can again modify the arguments of \cite{Mat95}, in particular of Lemma 15.14, or the proof of Lemma \ref{M2lemma} of this paper.

Let us say that $E\subset\Pn$ is \emph{Ambrosio-Kirchheim $m$-rectifiable}, if there are Lipschitz maps $f_i:A_i\to\Pn, A_i\subset\R^m,$ such that $\H^m(E\setminus \cup_{i=1}^{\infty}f_i(A_i))=0$.

\begin{thm}\label{AKhor}
Let $0<m< n+1$ and let $E\subset\Pn$ be $\H^m$ measurable with $\H^m(E)<\infty$. Then $E$ is horizontally parabolic  $m$-rectifiable if and only it is Ambrosio-Kirchheim $m$-rectifiable. In particular, horizontal $m$-Lipschitz graphs are horizontally parabolic  $m$-rectifiable
\end{thm}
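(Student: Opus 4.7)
The plan is as follows. The easy implication, horizontal parabolic rectifiability $\Rightarrow$ AK rectifiability, is immediate from the definitions: a horizontal $(m,L)$-Lipschitz graph $G_g$ with $g:A\to V^\perp$ and $A\subset V\in H(n,m)$ is the image of $A$ under the parabolic Lipschitz map $\tilde g(p)=p+g(p)$ (Remark \ref{graphrem}(1)); since any $V\in H(n,m)$ is parabolically isometric to $\R^m$, this exhibits $E$ (up to an $\H^m$-null set) as a countable union of Lipschitz images of subsets of $\R^m$.

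For the converse, write $E$ (up to $\H^m$-null) as $\bigcup_i f_i(A_i)$ with $f_i:A_i\to\Pn$ parabolic $L$-Lipschitz and $A_i\subset\R^m$; it suffices to treat one piece and I will drop the index. Writing $f=(u,v)$ and using $\|(y,s)\|\geq |y|$ and $\|(y,s)\|\geq\sqrt{|s|}$, I obtain that $u:A\to\R^n$ is Euclidean $L$-Lipschitz and that $|v(p)-v(q)|\leq L^2|p-q|^2$ for all $p,q\in A$. By Rademacher $u$ is Euclidean differentiable $\mathcal L^m$-a.e., and the quadratic bound forces $Dv=0$ $\mathcal L^m$-a.e. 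The crucial technical step is a \emph{key oscillation lemma}: at $\mathcal L^m$-a.e.\ $p_0\in A$ one has
\[
\lim_{r\to 0} r^{-2}\sup\{|v(p)-v(p_0)|: p\in A\cap B^m(p_0,r)\}=0.
\]
This is proved by a chain argument at any Lebesgue density point: for every integer $N$ and every $r$ small enough, one can interpolate between $p_0$ and any $p\in A\cap B^m(p_0,r)$ by points $q_0,\dots,q_N\in A\cap B^m(p_0,2r)$ with $|q_{j-1}-q_j|\leq 2r/N$, obtained by perturbing straight-line samples into $A$; this is possible because at a density point the measure missed by $A$ in $B^m(p_0,2r)$ is eventually smaller than any single perturbation ball $B^m(q_j',r/(2N))$. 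Summing the quadratic bound gives $|v(p)-v(p_0)|\leq 4L^2 r^2/N$, and $N$ is arbitrary.

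With this lemma in hand, split $A=A'\cup A''$ according to whether $\operatorname{rank}Du=m$ or $<m$, and write $A''=\bigcup_{k<m}A^{(k)}$ according to the constant rank $k$. On each $A^{(k)}$, for $p_0$ generic the image $u(B^m(p_0,r))$ is contained in an $o(r)$-slab around a $k$-plane in $\R^n$ and the $t$-oscillation of $f$ on $B^m(p_0,r)$ is $o(r^2)$ by the key lemma; covering $f(B^m(p_0,r))$ by parabolic balls of radius $\rho=\eta(r)^{1/2}r$ (where $\eta(r)\downarrow 0$ from the key lemma, made uniform on a large subset by Egoroff) and a direct counting in the $k$ horizontal, $n-k$ transverse and $t$ directions yields $\H^m(f(A^{(k)}))=0$ for each $k<m$. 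On the full-rank part $A'$, at a generic $p_0$ the Euclidean differential $Du(p_0)$ is injective, so $f$ is parabolically bilipschitz near $p_0$, and the candidate horizontal plane $V=Du(p_0)(\R^m)\times\{0\}\in H(n,m)$ satisfies
\[
\|P_{V^\perp}(f(p)-f(p_0))\|^2=|P_{V^{\perp,\R^n}}(u(p)-u(p_0))|^2+|v(p)-v(p_0)|=o(|p-p_0|^2),
\]
by Euclidean approximate differentiability of $u$ and the key lemma. Since $\|f(p)-f(p_0)\|\geq c|p-p_0|$, for $s>0$ the set $E\cap B(f(p_0),r)\setminus X(f(p_0),V,s)$ is empty for all sufficiently small $r$, so $V$ is a horizontal approximate tangent $m$-plane of $E$ at $f(p_0)$. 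Applying $(3)\Rightarrow(2)$ of Theorem \ref{main1} concludes that $E$ is horizontally parabolic $m$-rectifiable.

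The hard part is the key oscillation lemma of Step 2: the quadratic Lipschitz bound is a pointwise pair condition, so propagating it into an $o(r^2)$ oscillation at scale $r$ requires the Lebesgue-density/chain construction, and one then has to upgrade it to a uniform version via Egoroff in order to push through the covering estimate that kills the low-rank strata $A^{(k)}$ with $k<m$; this is the delicate step, since for $k=m-1$ the naive covering bound only barely converges.
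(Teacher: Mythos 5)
Your proof is essentially correct, but it follows a genuinely different route from the paper's. The paper does not argue directly: the nontrivial implication (Ambrosio--Kirchheim implies horizontally parabolic rectifiable) is disposed of in Subsection \ref{Heisenberg} by viewing $\Pn$ as a vertical subspace of the Heisenberg group $\h^{n}$, noting that for $m\le n$ Ambrosio--Kirchheim and Franchi--Serapioni--Serra Cassano rectifiability coincide by definition, and quoting Theorem 3.14(iii) of \cite{MSS10}, whose tangent-plane characterization is literally condition (3) of Theorem \ref{main1}; the author explicitly says he did not look for an intrinsic proof. You stay inside $\Pn$: split $f=(u,v)$ with $u$ Euclidean Lipschitz and $|v(p)-v(q)|\le L^2|p-q|^2$, upgrade the pairwise quadratic bound to an $o(r^2)$ oscillation at density points by the chain argument, stratify by the rank of $Du$, kill the strata of rank $k<m$ by a covering estimate, and produce horizontal approximate tangent planes on the full-rank part, concluding by (3)$\Rightarrow$(2) of Theorem \ref{main1}. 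This buys a self-contained argument with no Heisenberg machinery, at the cost of the measure-theoretic bookkeeping below. Incidentally, your worry about $k=m-1$ is unfounded: since the key lemma gives vertical oscillation $\le\eta r^2$ and the slab gives transverse width $\le\eta r$, parabolic balls of radius $\rho=\eta^{1/2}r$ need only one layer in the transverse and vertical directions, so $\lesssim\eta^{-k/2}$ balls suffice and the premeasure is $\lesssim\eta^{(m-k)/2}r^m=o(r^m)$ for every $k\le m-1$.

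The one step that is wrong as stated is the conclusion on the full-rank part: ``$E\cap B(f(p_0),r)\setminus X(f(p_0),V,s)$ is empty for all sufficiently small $r$''. Your cone estimate only controls points $f(p)$ with $p\in A$ close to $p_0$ in the domain, whereas $E\cap B(f(p_0),r)$ may contain points of the other pieces $f_{i'}(A_{i'})$, of the exceptional $\H^m$-null set, and even points $f(q)$ with $q$ far from $p_0$ (local bilipschitzness at $p_0$ does not prevent distant preimages from landing near $f(p_0)$). Fortunately Definition \ref{m-apprtan} only requires this set to have $\H^m$ measure $o(r^m)$, and the repair is the standard one used in the proof of Theorem \ref{mLthm}(1): decompose the full-rank good set into countably many measurable pieces $A_j$ of small diameter on which, by Egoroff/Lusin, the differentiability modulus of $u$, the oscillation modulus of your key lemma, and a lower bound on the least singular value of $Du$ (needed to convert $o(|p-p_0|)$ into $o(\|f(p)-f(p_0)\|)$) are all uniform; then for $F_j=E\cap f(A_j)$ and $a=f(p_0)$, $p_0\in A_j$, every point of $F_j\setminus\{a\}$ lies in $X(a,V_{p_0},s)$, so $E\cap B(a,r)\setminus X(a,V_{p_0},s)\subset ((E\setminus F_j)\cap B(a,r))\cup\{a\}$, and Theorem \ref{dens}(2) gives $\Theta^{\ast m}(E\setminus F_j,a)=0$ for $\H^m$ a.e.\ $a\in F_j$, which is exactly \eqref{M2eq}-type decay. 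Two smaller points: to speak of density points and measurable strata, first replace each $A_i$ by its closure (both the Lipschitz bound on $u$ and the quadratic bound on $v$ pass to the closure by continuity); and ``$u$ differentiable a.e., $Dv=0$ a.e.'' should be read via a Lipschitz extension of $u$ restricted back to $A$ ($v$ is not defined on an open set, but your key lemma replaces any statement about $Dv$). With these routine insertions the argument is complete, and the easy direction and the final assertion about horizontal Lipschitz graphs are as in the paper.
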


Clearly, horizontally LG $m$-rectifiable sets and horizontal $m$-Lipschitz graphs are Ambrosio-Kirchheim rectifiable.  We shall verify the remaining part in Subsection \ref{Heisenberg}.

Now we show that codimension 1 Ambrosio-Kirchheim rectifiable sets in $\Pn$ are trivial. 

\begin{thm}\label{Lip0}
If $f:A\to\Pn, A\subset\R^{n+1},$ is Lipschitz, then $\H^{n+1}(f(A))=0$.
\end{thm}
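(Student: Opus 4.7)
The strategy is to exhibit $f(A)$ as a Euclidean $n$-rectifiable subset of $\R^{n+1}$ whose Euclidean approximate tangent planes lie in $H=\R^n\times\{0\}$ almost everywhere, and then invoke Theorem~\ref{eucpr1}(2).

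Reduce first to bounded $A$ by writing $A=\bigcup_k(A\cap[-k,k]^{n+1})$. Writing $f=(f_1,f_2)$ with $f_1:A\to\R^n$ and $f_2:A\to\R$ and squaring the Lipschitz estimate $\|f(x)-f(y)\|\le L|x-y|$ gives
\[
|f_1(x)-f_1(y)|^2+|f_2(x)-f_2(y)|\le L^2|x-y|^2,
\]
so $f_1$ is Euclidean $L$-Lipschitz and $|f_2(x)-f_2(y)|\le L^2|x-y|^2$. In particular $f$ is Euclidean Lipschitz into $\R^{n+1}$ on bounded sets, and Remark~\ref{graphrem}(2) applied to $f_2$ yields $\mathcal L^1(f_2(A))=0$. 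Extend $f_2$ to a Lipschitz function on $\R^{n+1}$ by McShane; Rademacher's theorem then gives differentiability $\mathcal L^{n+1}$-a.e., and at every such point $x\in A$ the quadratic bound $|f_2(y)-f_2(x)|=O(|y-x|^2)=o(|y-x|)$ forces $\nabla f_2(x)=0$. Consequently, at $\mathcal L^{n+1}$-a.e.\ $x\in A$, the Euclidean differential $Df(x)$ has range contained in $H$ and rank at most $n$.

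The central step is to show $f(A)$ is Euclidean $n$-rectifiable. By the Lusin-Whitney approximation for Lipschitz maps, for each $k\in\N$ there is a closed set $K_k\subset A$ with $\mathcal L^{n+1}(A\setminus K_k)<1/k$ and a $C^1$-extension $F_k:\R^{n+1}\to\R^{n+1}$ of $f|_{K_k}$ satisfying $DF_k=Df$ on $K_k$, hence of rank $\le n$ there. Stratifying $K_k$ by the integer values of $\operatorname{rank}(DF_k)$ and applying the constant-rank theorem on each stratum covers $F_k(K_k)=f(K_k)$ by countably many $C^1$-submanifolds of $\R^{n+1}$ of dimension $\le n$; summing over $k$ makes $f\bigl(\bigcup_kK_k\bigr)$ Euclidean $n$-rectifiable. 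The image of the residual $\mathcal L^{n+1}$-null set $N=A\setminus\bigcup_kK_k$ is controlled via Federer's Sard-type theorem for Lipschitz maps (\cite{Fed69}, 3.2.15) together with the rank bound, so $f(A)$ is Euclidean $n$-rectifiable with locally finite $\H^n_E$ measure. At $\H^n_E$-a.e.\ $f(x)\in f(A)$, the Euclidean approximate tangent $n$-plane exists and equals the range of $Df(x)$, which lies in $H$.

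Theorem~\ref{eucpr1}(2) applied to $E=f(A)$ with $m=n$ now delivers $\H^{n+1}(f(A))=0$. The main technical obstacle is the third step: establishing Euclidean $n$-rectifiability of $f(A)$ and handling the image of the null exceptional set, which combines the Lusin-Whitney decomposition, the constant-rank theorem stratified over ranks, and Federer's Sard theorem for Lipschitz maps.
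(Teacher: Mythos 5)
There is a genuine gap at the central step, and in fact the intermediate claim is false. You assert that $f(A)$ is Euclidean $n$-rectifiable with locally finite $\H^n_E$ measure, and you need at least $\sigma$-finiteness of $\H^n_E$ on $f(A)$ to invoke Theorem~\ref{eucpr1}(2), whose hypothesis is $\H^m_E(E)<\infty$. But a parabolically Lipschitz image of a subset of $\R^{n+1}$ need not be $\H^n_E$ $\sigma$-finite: take $\phi:F'\to\R$ as in Example~\ref{ex2} (so $F'\subset[0,1]$ is a fat Cantor set, $|\phi(s)-\phi(t)|\lesssim|s-t|^4$, and $\phi(F')$ is an uncountable Cantor set), put $A=[0,1]^n\times F'$ and $f(x,y)=(x,\phi(y))$. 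Then $f$ is Lipschitz from the Euclidean metric into the parabolic one, yet $f(A)=[0,1]^n\times\phi(F')$ contains uncountably many disjoint horizontal $n$-cubes, so it is not $\sigma$-finite for $\H^n_E$ and Theorem~\ref{eucpr1}(2) is not applicable. Note also that the information you actually extract (rank of $Df\leq n$ a.e., $\nabla f_2=0$ a.e., $\mathcal L^1(f_2(A))=0$, horizontal tangents) cannot suffice: a set of the form $[0,1]^n\times C$ with $\mathcal L^1(C)=0$ but $\H^{1/2}_E(C)>0$ has positive parabolic $\H^{n+1}$ measure (a parabolic ball of radius $\rho$ has $t$-extent $\rho^2$, so $\mathcal L^n\times\nu$ with $\nu$ a $\tfrac12$-Frostman measure on $C$ gives the lower bound). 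Thus the conclusion really depends on a quantitative, scale-by-scale use of the Lipschitz condition on the full $(n+1)$-dimensional parameter set, not on the a.e.\ infinitesimal data.

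The supporting tools you cite also do not deliver the claimed covering. The constant-rank theorem requires the rank to be constant on an open neighbourhood; on a measurable stratum of $K_k$ where $\operatorname{rank}DF_k$ takes a fixed value the rank may jump up arbitrarily nearby (Whitney's example of a $C^1$ function non-constant on a connected critical set shows images of rank-deficient sets of $C^1$ maps are not controlled this way). Likewise the Morse--Sard--Federer theorem for $C^1$ (or Lipschitz) maps $\R^{n+1}\to\R^{n+1}$ only yields that the image of the set where the rank is $\leq n$ is $\mathcal L^{n+1}$-null, not that it is $\H^n_E$-finite or $n$-rectifiable; already $F(x,y)=(x,u(y))$ with $u\in C^1(\R)$ having a dimension-one set of critical values shows the image of the critical set can have Hausdorff dimension $n+1$. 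The paper's proof avoids all of this: it works directly with parabolic coverings, using a chaining argument at Lebesgue density points of $A$ to show that $f(A\cap Q)$, for a cube $Q$ of side $R$ subdivided into $N^{n+1}$ subcubes of side $r=R/N$ all meeting $A$, lies in a box of horizontal size $\lesssim R$ and vertical size $\lesssim Rr$, hence is covered by $\lesssim(R/\sqrt{Rr})^n$ parabolic balls of diameter $\sqrt{Rr}$, giving $\sum d(B_i)^{n+1}\lesssim R^{n+1}/\sqrt N$; summing over a Vitali-type family of such cubes and letting $N\to\infty$ gives $\H^{n+1}(f(A))=0$. To repair your argument you would have to prove a statement of this quantitative multiscale type; the reduction to Theorem~\ref{eucpr1}(2) cannot work as stated.
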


\begin{proof}
We may assume that $A$ is bounded and since $f$ trivially has a Lipschitz extension to the closure of $A$, we may assume that $A$ is compact. Let $L$ be the Lipschitz constant of $f$,\ $N$ be a positive integer and let $a$ be a Lebesgue density point of $A$. Then for sufficiently small $R>0$, there is a closed cube $Q$ of side-length $R$ containing $a$ such that $A\cap Q_{i}\neq\emptyset$ for $i=1,\dots,N^{n+1},$ where the $Q_{i}$ form a partition of $Q$ into closed cubes of side-length $r=R/N$. Let $P, P(x,t)=t$, be the projection onto the $t$-axis. If $Q_i$ and $Q_j$ meet, then the Euclidean distance $d_E(P(f(A\cap Q_i)),P(f(A\cap Q_j)))\lesssim r^2$. The implicit constant here and below only depends on $L$ and $n$. For every $Q_i$ there is chain $Q_{i_1},\dots,Q_{i_k}, k\leq (n+1)N,$ with $Q_{i_1}=Q_1$ and $Q_{i_k}=Q_i$ such that $Q_{i_j}$ and $Q_{i_{j_+1}}$ meet, whence $d_E(P(f(A\cap Q)))\lesssim Nr^2=Rr$. Thus $f(A\cap Q)\subset Q'\times I$, where $Q'\subset\R^n$ is a cube of side-length $\lesssim R$ and $I\subset\R$ is an interval of length $\lesssim Rr$. Hence we can cover $f(A\cap Q)$ with $M, M\lesssim (R/\sqrt{Rr})^n$, parabolic balls $B_i,i=1,\dots,M$, of diameter $\sqrt{Rr}$, for which we have 
$$\sum_{i=1}^Md(B_i)^{n+1} \lesssim M(\sqrt{Rr})^{n+1}\lesssim R^n\sqrt{Rr}=R^{n+1}/\sqrt{N}.$$

We can cover $\mathcal L^{n+1}$ almost all of $A$ with cubes $Q_i$ with side-length $R_i$ satisfying the above in place of $Q$ and $R$ and such that 
$\sum_i  R_i^{n+1}\lesssim \mathcal L^{n+1}(A)$. Then each $f(A\cap Q_i)$ is covered by parabolic balls $B_{i,j},j=1,\dots,M_i$, which satisfy 
$\sum_{j=1}^{M_i}d(B_{i,j})^{n+1} \lesssim R_i^{n+1}/\sqrt{N}.$ Therefore $\mathcal H^{n+1}$ almost all of $f(A)$ is covered by the balls 
$B_{i,j},j=1,\dots,M_i,i=1,2,\dots$, for which
$$\sum_{i,j}d(B_{i,j})^{n+1} \lesssim \sum_iR_i^{n+1}/\sqrt{N}\lesssim \mathcal L^{n+1}(A)/\sqrt{N}.$$
As we can take $N$ arbitrarily large, it follows that $\H^{n+1}(f(A))=0$.
\end{proof}

\subsection{Heisenberg groups}\label{Heisenberg}
Heisenberg group $\h^1$ is somewhat similar to $\p^2$, but with essential differences. It is $\R^{3}$ with a non-Abelian group structure and non-Euclidean metric. Balls have the same ellipsoidal shape as in $\p^2$ but the structure is more complicated in that the axis of the ellipsoids turn when the center moves. On the other hand, $\h^1$ also has advantages over $\p^2$. It is a geodesic space; any two points can be joined with a rectifiable curve of minimal length, whereas in $\p^2$ no two points with different $t$ coordinates can be joined by any rectifiable curve. There are also many analytical tools available in $\h^1$, for example Pansu's Rademacher theorem. The same comments apply to $\hn=\R^{2n+1}, n\geq 2,$ but there are more differences to parabolic spaces.

Franchi, Serapioni and Serra Cassano \cite{FSS01} used two different definitions for rectifiability in $\hn$. For low-dimensional, $m\leq n$, sets Federer's definition is fine, but for high-dimensional sets, $m\geq n+2$, it would only give trivial zero measure sets, recall from Theorem \ref{Lip0} the same in $\Pn$ for codimension 1. For $m$-dimensional, $m\geq n+2$, sets \cite{FSS01} used covering with $C^1$ (in the Heisenberg sense) level sets.

An analogue of Theorem \ref{main} in Heisenberg groups was proved in \cite{MSS10}, and in \cite{IMM20} for horizontal sets in general homogeneous groups. Antonelli and Merlo have developed in \cite{AM20} and \cite{AM21} rectifiability theory  in arbitrary Carnot groups with many deep results, in particular on relations between rectifiability, tangent planes and tangent measures including an analogue of Theorem \ref{main}.

In Heisenberg and Carnot groups intrinsic differentiable and  Lipschitz graphs have recently played a fundamental role in many respects. They are defined geometrically in terms of cones. In Heisenberg groups a Rademacher theorem was proved in \cite{FSS11} for codimension 1 sets and recently by Vittone for vertical sets of general dimensions in \cite{Vit20}. But Julia, Nicolussi Golo and Vittone showed in \cite{JNV21} that this is false in some Carnot groups.

For $n\geq 1$\ $\Pn$ can be considered as a vertical subspace of $\h^{n}=\R^{2n+1}$ identifying $(x,t)\in\Pn$ with $(x,0,t)\in \h^{n}$. In this way Chousionis and Tyson \cite{CT15} obtained Marstrand's theorem 'existence of $s$-density implies that $s$ is an integer' in $\Pn$ after proving it in  Heisenberg groups. In the same manner we can finish the proof of Theorem \ref{AKhor}: For $m\leq n$ Ambrosio-Kirchheim and Franchi-Serapioni-Serra Cassano $m$-rectifiable sets are the same immediately by the definitions. For subsets of $\Pn\subset\h^{n}$ the condition (3) of Theorem \ref{main1} characterizing horizontally parabolic  $m$-rectifiable sets is exactly the same as the characterization \cite{MSS10}, Theorem 3.14(iii), of Franchi-Serapioni-Serra Cassano $m$-rectifiable sets. Hence these two classes are the same, too.

I suppose it is not necessary to go via Heisenberg groups to prove this, but since it was available, I didn't try to find another way.

\section{Examples}\label{example}
We now construct several examples. 
We first show that Lipschitz graphs need not be parabolic  rectifiable.

\begin{ex}\label{ex}
There are a function $f:[0,1]\to\R$ such that $f$ is H\"older continuous with exponent $1/2$ and positive numbers $c$ and $s, 0<s<1,$ with the following properties: Let $G=\{(f(t),y,t)\in\Pn:y\in [0,1]^{n-1},t\in [0,1]\}$ be the graph of the Lipschitz function $g, g(0,y,t)=(f(t),0,0)$. Then for all $V\in V(n,n+1), a\in G$ and $0<r<1$ there is $p\in G\cap B(a,r)$ for which $B(p,sr)\subset \{q:d(q-a,V)\geq sr\}$ and $\H^{n+1}(G\cap B(p,sr))\geq cr^{n+1}$. Hence $G$ does not have any approximate tangent planes and so it is not parabolic  $(n+1)$-rectifiable. 
\end{ex}

\begin{proof}
The last inequality follows from the AD-regularity of Lipschitz graphs. Let $f$ be the Weierstrass function defined by 
$$f(t)=c_0\sum_{k=1}^{\infty}2^{-k/2}\cos (2^kt),\ t\in [0,1].$$
Then if $c_0>0$ is sufficiently small, $f$ has the following properties:
\begin{equation}\label{exeq1}
|f(t)-f(t')|\leq \sqrt{|t-t'|}\ \text{for}\ t,t'\in [0,1],\end{equation}
there is $c>0$ such that for every subinterval $I$ of $[0,1]$ there are $t,t'\in I$ such that
\begin{equation}\label{exeq2}
|f(t)-f(t')|> c\sqrt{d(I)}.\end{equation}
For the proofs, see \cite{BP17}, Lemma 5.1.8 and Theorem 5.3.1.

Let $0<s<1/4$, $0<r<1$, $a=(f(\tau),\beta,\tau)\in G$ and $V\in V(n,n+1)$. Then for some unit vector $e=(e_1,e_2)\in\R^n, e_1\in\R, e_2\in\R^{n-1}$,\ $V^{\perp}=\{(\lambda e,0):\lambda\in\R\}$ and for $p=(f(t),y,t)\in G$,
$$P_{V^{\perp}}(p)=((e_1f(t)+e_2\cdot y)e,0),$$
whence 
$$d(p-a,V)= |P_{V^{\perp}}(p-a)|=|e_1(f(t)-f(\tau))+e_2\cdot (y-\beta)|.$$
Suppose that $|e_1|\geq 1/2$ and let $p=(f(t),\beta,t)$. If $d(p-a,V)<2sr$, then  $|f(t)-f(\tau)|<2sr/|e_1|\leq 4sr$.
Suppose that $\tau + r^2/2\leq 1$, the other case $\tau - r^2/2\geq 0$ can be dealt 
with in the same way.  By \eqref{exeq2}, there are $t,t'\in [\tau+r^2/4,\tau + r^2/2]$ such that $|f(t)-f(t')|\geq cr/2$. Consider 
$p=(f(t),\beta,t),p'=(f(t'),\beta,t')\in G$. Then $p,p'\in B(a,r)$. If $d(p-a,V)<2sr$ and $d(p'-a,V)<2sr$, then by the above
$$cr/2\leq |f(t)-f(t')|< 8sr.$$
This is impossible if we choose $s=c/16$. With this choice $d(p-a,V)\geq 2sr$ or $d(p'-a,V)\geq2sr$. Suppose this holds for $p$. Then 
$B(p,sr)\subset  \{q:d(p-a,V)\geq sr\}$, as required.

Suppose then that $|e_1|<1/2$, so $|e_2|>1/2$. Let $0<r<1/2$ and choose $p=(f(\tau),y,\tau)\in G$ such that $y-\beta$ is parallel to $e_2$ and $r=|y-\beta|=\|p-a\|$. Then 
$$d(p-a,V)=|P_{V^{\perp}}(p-a)|=|e_2|r> r/2\geq 2sr,$$
since $s<1/4$. This  completes the proof.
\end{proof} 

The following example shows that parabolic $(n+1)$-rectifiable sets in $\Pn$ need not be rectifiable in terms of the regular Lipschitz graphs, recall the discussion in Subsection \ref{uniform}. I don't give a formal definition of these graphs, I only state a weaker property that will suffice: if $G_g\subset\Pn$ is a regular $(n+1)$-Lipschitz graph over $V\times\R, V\in G(n,n-1)$, then there are a measurable function $\phi:C\to\R, C\subset \R^{n-1}\times\R$,  and a rotation $\rho$ of $\R^{n}$ such that $\rho(\{0\}\times\R^{n-1})=V$ and $g(\rho(0,y),t)=\phi(y,t)e_V$, where $e_V\in\R^{n-1}$ is a unit vector orthogonal to $V$ and,
\begin{equation}\label{ex4eq}
\int_{u:(y,u)\in C}\frac{|\phi(y,t)-\phi(y,u)|^2}{|t-u|^2}\,du<\infty\ \text{for almost all}\ (y,t)\in C,
\end{equation}
see \cite{DDH18}, Theorem 2.3.

\begin{ex}\label{ex4}
There is an $\H^{n+1}$ measurable parabolic $(n+1)$-rectifiable set $E\subset\Pn$ such that $0<\H^{n+1}(E)<\infty$ and $\H^{n+1}(E\cap G)=0$ for every regular $(n+1)$-Lipschitz graph $G$.
\end{ex}

\begin{proof}
Let $f_0:[0,1]\to\R$ be a function satisfying \eqref{exeq1} and  \eqref{exeq2}, $f_0$ and $c$ will be fixed throughout the proof.

Let us first check the elementary fact that for every subinterval $I$ of $[0,1]$ there are   
\begin{equation}\label{ex4eq2}
a,b\in I, a<b,\ \text{such that}\ f_0(a)=f_0(b)\ \text{and}\ b-a\geq \tilde{c}d(I)\ \text{where}\ \tilde{c}=c^4/128.  
\end{equation}
To prove this let $N$ be an integer such that $1/N<c^2/16\leq 2/N$ and let $I_j, j=1,\dots,2N,$ be subintervals of $I$ of length $d(I)/(4N)$ and 
$d(I_j,I_{j+1})=d(I)/(4N)$ for $j=1,\dots,2N-1$. Let $t_j<t_j'$ in $I_j$ be the points given by \eqref{exeq2}. We may assume that there
are $N$ intervals $I_{j_i}, i=1,\dots,N$, such that for $u_i=t_{j_i}<u_i'=t_{j_i}'<u_{i+1}=t_{j_{i+1}}, i\leq N-1,$ we have $f_0(u_i')-f_0(u_i)> c\sqrt{d(I)/(4N)}$. Notice that also $|u_i-u_i'|> c^2d(I)/(4N)$. If 
 $f_0(u_{i+1})\leq f_0(u_i')$, then there are $a$ and $b$ such that $f_0(a)=f_0(b)$ and either $a\leq u_i'$ and $b= u_{i+1}$ or $a= u_i$ and $b\geq u_{i}'$. In both cases $b-a\geq c^2d(I)/(4N)\geq (c^4/128)d(I)$. We assume then  that $f_0(u_i')< f_0(u_{i+1})$ for all $i=1,\dots,N-1$ and derive a contradiction. Then
\begin{align*}
&\sqrt{d(I)} \geq f_0(u_N')-f_0(u_1') =\sum_{i=1}^{N-1}(f_0(u_{i+1}')-f_0(u_{i+1})+f_0(u_{i+1})-f_0(u_i'))\\
&\geq \sum_{i=1}^{N-1}(f_0(u_{i+1}')-f_0(u_{i+1}))\geq (N-1)c\sqrt{d(I)/(4N)}\geq c\sqrt{Nd(I)}/4>\sqrt{d(I)}.
\end{align*}

Let $I$ be a subinterval of $[0,1]$ and let $f:I\to\R, 0<L'<L\leq 1, 0<c''<c', t_I,t_I'\in I,$ be such that 
\begin{equation}\label{exeq3}
|f(t)-f(t')|\leq L\sqrt{|t-t'|}\ \text{for}\ t,t'\in I,\end{equation}
\begin{equation}\label{exeq4}
|f(t_I)-f(t_I')|> c'L\sqrt{d(I)}.\end{equation}
Now we shall show that there are non-overlapping closed intervals $I_j=[a_j,b_j]\subset I, j=1,2,\dots,$ such that $f(a_j)=f(b_j)$ for all $j$ and letting $A_I=\cup_jI_j$ we have $\mathcal L^1(I\setminus A_I)=0$ and there is a continuous $f_I:I\to\R$ for which
\begin{equation}\label{exeq4a}
f_I(t)=(L'/L)(f(t)-f(a_{j}))+f(a_{j})\ \text{for}\ t\in I_{j},\end{equation}
\begin{equation}\label{exeq5}
|f_I(t)-f_I(t')|\leq L\sqrt{|t-t'|}\ \text{for}\ t,t'\in I,\end{equation}
\begin{equation}\label{exeq6}
|f_I(t)-f_I(t')|\leq L'\sqrt{|t-t'|}\ \text{for}\ t,t'\in I_j, j=1,2,\dots,\end{equation}
\begin{equation}\label{exeq7}
|f_I(t_I)-f_I(t_I')|> c''L'\sqrt{d(I)}.\end{equation}

Let $N$ be a positive integer with $0< 1/\sqrt{N} < (c'L-c''L')/2.$ Partition $I$ into $N$ intervals $I_1',\dots,I_N'$ of length $d(I)/N$. Choose first by \eqref{ex4eq2} $a_{1,1},b_{1,1}\in I_1', a_{1,1}<b_{1,1},$ such that $f(a_{1,1})=f(b_{1,1})$ and $b_{1,1}-a_{1,1} \geq \tilde{c}d(I_1')$ and set $I_{1,1}=[a_{1,1},b_{1,1}]$. Let $J_1, J_2 $ be the complementary intervals of $I_{1,1}; I_1'\setminus (a_{1,1},b_{1,1})=J_1\cup J_2$ (with only one of them if $a_{1,1}$ or $b_{1,1}$ is an end-point of $I_1'$). Let $a_{1,2},b_{1,2}\in J_1, a_{1,2}<b_{1,2},$ such that $f(a_{1,2})=f(b_{1,2})$ and $b_{1,2}-a_{1,2} \geq \tilde{c}d(J_1)$. Set $I_{1,2}=[a_{1,2},b_{1,2}]$. In the same way find $I_{1,3}=[a_{1,3},b_{1,3}]\subset J_2$. Then $\mathcal L^1(I_1'\setminus I_{1,1})\leq (1-\tilde{c})d(I)$ and
$$\mathcal L^1(I_1'\setminus (I_{1,1}\cup I_{1,2} \cup I_{1,3}))\leq (1-\tilde{c})^2d(I);$$ 
the measure of the complement of the selected intervals decreases geometrically. Continuing in this manner we find the intervals $I_{1,j}=[a_{1,j},b_{1,j}]\subset I_1', j=1,2,\dots,$ such that $f(a_{1,j})=f(b_{1,j})$ and $\mathcal L^1(I_1'\setminus \cup_jI_{1,j})=0.$ 

Next we perform the same operation on each $I_2',\dots,I_N'$ getting the intervals $I_{k,j}$ for $k=1,\dots,N$. We denote all these by $I_j=[a_j,b_j]$ and set $A_I=\cup_jI_j$. Then
$\mathcal L^1(I\setminus A_I)=0$ and $f(a_j)=f(b_j)$ for all $j$. Define $f_I$ on  $A_I$ by $f_I(t)=(L'/L)(f(t)-f(a_{j}))+f(a_{j})$ for $t\in I_{j}$ and set $f_I(t)=f(t)$ when $t\in I\setminus A_I$. Then $f_I$ is continuous, since it agrees with $f$ at the end-points of every $I_j$. 

We now check the properties \eqref{exeq4a} - \eqref{exeq7}. First \eqref{exeq4a} holds by definition. To check \eqref{exeq5} we may assume that for some $j,j', t\in I_j$ and $ t'\in I_{j'}$, since $A_I$ is dense. Suppose $j\neq j'$  and $t<t'$. Then $t\leq b_j\leq a_{j'}\leq t'$ and we get by \eqref{exeq3}
\begin{align*}
&|f_I(t)-f_I(t')|=|(L'/L)(f(t)-f(t'))+(1-(L'/L))(f(b_{j})-f(a_{j'}))|\\
&\leq L'\sqrt{|t-t'|}+(L-L')\sqrt{|b_j-a_{j'}|}\leq L\sqrt{|t-t'|}.
\end{align*}
The case $j=j'$ is  easier and gives \eqref{exeq6}.

Let us first check \eqref{exeq7} when $t_I\in I_j$ and $ t_I'\in I_{j'}$ for some $j,j'$. Then by \eqref{exeq4},
\begin{align*}
&|f_I(t_I)-f_I(t_I')|\geq |f(t_I)-f(t_I')|-(1-L'/L)(|f(t_I)-f(a_j)|+|f(t_I')-f(a_{j'})|)\\
&>c'L\sqrt{d(I)}-2\sqrt{d(I)/N}>c''L'\sqrt{d(I)}\end{align*}
by the choice of $N$. If, for example, $t_I$ is not in $A_I$, then there are $a_{j_i}$ converging to $t_I$ and $f_I(a_{j_i})=f(a_{j_i})$ from which \eqref{exeq7} follows by a similar argument. 

Let $(L_k)$ and $(c_k)$ be strictly decreasing sequences of positive numbers with $L_1<1$ and $c/2<c_k<c$. Now we perform a recursive construction by first applying the above with $f=f_0, I=[0,1], L=1, L'=L_1, c'=c, c''=c_1, t_I=t$ and $t_I'=t'$, where $t,t'\in [0,1]$ are as in \eqref{exeq2}. Set $A_1=A_I, f_1=f_I$ and 
$I_{1,j}=[a_{1,j},b_{1,j}]=[a_{j},b_{j}]$. Suppose that for some $k\geq 1$ we have constructed the non-overlapping intervals $I_{k,j}=[a_{k,j},b_{k,j}]$,   
the continuous function $f_k:[0,1]\to\R$ and the points $t_{k,j},t_{k,j}'\in I_{k,j},$ such that
\begin{equation}\label{exeq9}
f_{k}(t)=(L_{k}/L_{k-1})(f_{k-1}(t)-f_{k-1}(a_{k,j}))+f_{k-1}(a_{k,j})\ \text{for}\ t\in I_{k,j},\end{equation}
\begin{equation}\label{exeq10}
|f_k(t)-f_k(t')|\leq L_l\sqrt{|t-t'|}\ \text{for}\ t,t'\in I_{l,j},l\leq k, j=1,2,\dots,\end{equation}
\begin{equation}\label{exeq11}
|f_k(t_{l,j})-f_k(t_{l,j}')|> c_kL_l\sqrt{d(I_{l,j})}.\end{equation}

Then we apply the above construction on each $I_{k,j}$ with $f=f_k,L=L_k, L'=L_{k+1}, c'=c_k, c''=c_{k+1},$  to obtain $I_{k+1,j}=[a_{k+1,j},b_{k+1,j}]$ with $f_{k}(a_{k+1,j})=f_{k}(b_{k+1,j})$ and $A_{k+1}=\cup_jI_{k+1,j}$ with $\mathcal L^1([0,1]\setminus A_{k+1})=0$. Again each $I_{k,j}$ is first divided into $N_k$ equal parts where $0< 1/\sqrt{N_k} < (c_kL_k-c_{k+1}L_{k+1})/2.$ We define $f_{k+1}$ on $A_{k+1}$ by
\begin{equation}\label{exeq12}
f_{k+1}(t)=(L_{k+1}/L_{k})(f_{k}(t)-f_{k}(a_{k+1,j}))+f_{k}(a_{k+1,j})\ \text{for}\ t\in I_{k+1,j}\end{equation}
and extend it to $[0,1]$ by continuity. Then by the construction,
\begin{equation}\label{exeq13}
|f_{k+1}(t)-f_{k+1}(t')|\leq L_l\sqrt{|t-t'|}\ \text{for}\ t,t'\in I_{l,j},l\leq k+1, j=1,2,\dots.\end{equation}
Finally we have
\begin{equation}\label{exeq14}
|f_{k+1}(t_{l,j})-f_{k+1}(t_{l,j}')|> c_{k+1}L_l\sqrt{d(I_{l,j})}\ \text{for}\ l\leq k+1, j=1,2,\dots\end{equation}
where the points $t_{k+1,j},t_{k+1,j}'\in I_{k+1,j},$ still need to be introduced. 
Going down from $k$ to $0$ with the formula \eqref{exeq9} we see that $f_{k+1}|I_{k+1,j}=L_{k+1}f_{0}|I_{k+1,j}+\a_{k+1,j}$
for some $\a_{k+1,j}\in\R$. Hence we can apply  \eqref{exeq2} to find $t_{k+1,j}$ and $t_{k+1,j}'$ so that \eqref{exeq14} holds for $l=k+1$ even with $c$ in place of $c_{k+1}$. For $l<k+1$ \eqref{exeq14} follows from \eqref{exeq11} by the same argument we used for \eqref{exeq7}.

Note that by the construction each interval of level $k+1$ is contained in a unique $k$ level interval. We choose the integers $N_k$ large enough so that 
\begin{equation}\label{exeq2a}
d(I_{k+1,i}) \leq (c/16)^2d(I_{k,j})\ \text{whenever}\ I_{k+1,i}\subset I_{k,j}.
\end{equation}

Let $A=\cap_kA_k$. Then $\mathcal L^1([0,1]\setminus A)=0$. For every $t\in A$ the sequence $f_k(t)$ converges, because, if $t\in I_{k,j}$, then by \eqref{exeq2a},
$$|f_k(t)-f_{k-1}(t)|=(1-L_k/L_{k-1})|f_{k-1}(t)-f_{k-1}(a_{k,j})|\leq \sqrt{d(I_{k,j})}\leq 2^{-k}.$$
Let $f=\lim_{k\to\infty}f_k:A\to\R$. By \eqref{exeq13}  and \eqref{exeq14} $f$ has the following properties for all $j,k$. 

\begin{equation}\label{exeq15}
|f(t)-f(t')|\leq L_k\sqrt{|t-t'|}\ \text{for}\ t,t'\in I_{k.j},\end{equation}
\begin{equation}\label{exeq16}
|f(t_{k,j})-f(t_{k,j}')|> L_k (c/2)\sqrt{d(I_{k,j})}.\end{equation}

We choose the sequence $(L_k)$ so that 
 \begin{equation}\label{exeq17}
\lim_{k\to\infty}L_k=0\ \text{and}\ \sum_kL_k^2=\infty.\end{equation}

Next we verify that if $B\subset A$ is Lebesgue measurable, then

\begin{equation}\label{exeq18}
\int_B\frac{|f(t)-f(u)|^2}{|t-u|^2}\,du=\infty\ \text{for almost all}\ t\in B.\end{equation}

We shall prove this at every density point $t$ of $B$. There are intervals $I_k:=I_{k,j_k}, k=1,2,\dots,$ such that $t\in I_k$ for all $k$. Let $d_k=d(I_{k,j_k}),t_k=t_{k,j_k},t_k'=t_{k,j_k}'$. Then by \eqref{exeq16} and \eqref{exeq15}, $|f(t_k)-f(t_{k}')|> L_k (c/2)\sqrt{d_k}$ and $|t_k-t_k'|>(c/2)^2d_k$. If, for example, $|f(t)-f(t_{k})|> L_k (c/4)\sqrt{d_{k}}$, then by \eqref{exeq15} 
$|f(t)-f(u)|> L_k (c/8)\sqrt{d_{k}}$ when $u\in I_k$ and $|u-t_k|\leq (c/8)^2d_k$. Hence  there is an interval $J_k\subset I_k$ such that $d(J_k)= (c/8)^2d_k$ and $|f(t)-f(u)|> L_k (c/8)\sqrt{d_{k}}$ for $u\in J_k$. Since  $\mathcal L^1(I_{k+1})\leq (c/16)^2d_k$ by \eqref{exeq2a} and $t$ is a density point of $B$, for sufficiently large $k$ we have $\mathcal L^1(B\cap J_k\setminus I_{k+1})\geq(c/32)^2d_k$, which implies
$$\int_{B\cap I_k\setminus I_{k+1}}\frac{|f(t)-f(u)|^2}{|t-u|^2}\,du\geq\frac{(L_k (c/8)\sqrt{d_{k}})^2}{((c/8)^2d_k)^2}(c/32)^2d_k= L_k^2/16,$$
and so by \eqref{exeq17}
$$\int_B\frac{|f(t)-f(u)|^2}{|t-u|^2}\,du \geq \sum_kL_k^2/16=\infty.$$

Let $E=\{(f(t),y,t):y\in [0,1]^{n-1}, t\in A\}=G_h$ with $h(y,t)=(f(t),0,0)$. Then $E$ is a Borel set with $0<\H^{n+1}(E)<\infty$. For every $k$, $E$ is contained in the union of the $(n+1,L_k)$-Lipschitz graphs $G_{h|[0,1]^{n-1}\times I_{k,j}}, j=1,2,\dots,$ so $E$ is LG $(n+1)$-rectifiable.

Now we can finish the proof. Let $G=G_g\subset\Pn$ be a regular Lipschitz $(n+1)$-graph over $V\times\R, V\in G(n,n-1)$, as in \eqref{ex4eq} and before it:
$$G_g=\{(\rho(0,y)+\phi(y,t)e_V,t):(y,t)\in C\}.$$
Let $e_1=(1,0,\dots,0)\in\Rn$. If $(f(t),y,t)\in E\cap G$, then for some $y'\in \R^{n-1}, (f(t),y,t)=(\rho(0,y')+\phi(y',t)e_V,t)$ and
$$f(t)=e_1\cdot (\rho(0,y')+\phi(y',t)e_V).$$
For $y\in [0,1]^{n-1}$ let $E_y=\{t\in [0,1]:(f(t),y,t)\in E\cap G\}.$
Then by \eqref{ex4eq} for almost all $y\in [0,1]^{n-1}$ and almost all $t\in E_y$,
$$\int_{E_y}\frac{|f(t)-f(u)|^2}{|t-u|^2}\,du<\infty.$$
On the other hand, whenever $\mathcal L^1(E_y)>0$ this integral is infinite for almost all $t\in E_y$ by \eqref{exeq18}. Hence $\mathcal L^1(E_y)=0$ for almost all $y\in [0,1]^{n-1}$ and so, with $P(x,y,t)=(y,t)$, $\H^{n+1}(P(E\cap G))=c(n)\mathcal L^{n}(P(E\cap G))=0$ by Fubini's theorem. Hence $\H^{n+1}(E\cap G)=0$, because $E\cap G$ is a countable union of parabolic Lipschitz graphs over subsets of $P(E\cap G)$.
\end{proof}

The vertical axis is Euclidean 1-rectifiable but not parabolic  1- rectifiable, but it has infinite $\H^1$ measure. We now show that neither the Euclidean rectifiable sets with finite parabolic Hausdorff measure need to be parabolic  rectifiable. 

\begin{ex}\label{ex1}
For $0<m\leq n$ there is a compact Euclidean $m$-rectifiable set $E\subset\Pn$ with $0<\H_E^m(E)<\infty$ and $0<\H^m(E)<\infty$ which is not parabolic  $m$-rectifiable. \end{ex}

\begin{proof}
I first perform the construction in $\p^1$. 
Let $(n_j)$ be a strictly increasing sequence of positive integers and set $N_k=n_1\cdot\cdot\cdot n_k$ and $r_k=1/N_k$ for $k=1,2,\dots.$ For each $k$ partition $[0,1]$ into intervals $I_{k,i}=[x_{k,i},x_{k,i}+r_k], i=1,\dots,N_k,$ of length $r_k$. Thus every $I_{k,i}$ is partitioned into $n_{k+1}$ intervals $I_{k+1,j}$. Define the line segments 
$$J_{k,i}=\{(x,t_{k,i}+r_k(x-x_{k,i})):x\in I_{k,i}\}, i=1,\dots,N_k, k=1,2,\dots,$$ 
so each $J_{k,i}$ has length $r_k\sqrt{1+r_k^2}$ and slope $r_k$. The $t_{k,i}$ are chosen so that $t_{1,i}=0$ and for $k\geq 2$ the left end-point of $J_{k,i}$ lies on $J_{k-1,j}$, where $I_{k,i}\subset I_{k-1,j}$.

Let $R_{k,i}=I_{k,i}\times [t_{k,i},t_{k,i}+r_k^2].$ Then each $R_{k+1,i}$ is contained in some $R_{k,j}$ and the parabolic diameter 
$d(R_{k,i})\leq 2r_k$. Define
$$E=\bigcap_{k=1}^{\infty}\bigcup_{i=1}^{N_k}R_{k,i}.$$
Then $1\leq\H_E^1(E)\leq 2$ and $1\leq\H^1(E)\leq 2$. For any $p\in E$ and $0<r<1$, the projection of $E\cap B(p,r)$ on the $x$-axis contains an interval of length $r$, which implies 
\begin{equation}\label{ex1eq}
\H^1(E\cap B(p,r))\geq r.\end{equation}

If $a=(\a,\tau)\in E$, then for a sequence $i(k), \a\in I_{k,i(k)}$,  
and there are $\tau_k$ for which $(\a,\tau_k)\in J_{k,i(k)}$ and $0\leq\tau_k-\tau \leq r_{k+1}r_k$, where  $r_{k+1}r_k/r_k^2=r_{k+1}/r_k\to 0$ as $k\to \infty$. 
Let $p_k=(x_k,t_k)\in E$ with $x_k\in I_{k,i(k)}$. Then there is  $t_k'$ such that  $(x_k,t_k')\in J_{k,i(k)}$ and $0\leq t_k'-t_k \leq r_{k+1}r_k$. Now
\begin{align*}
&T_{a,r_k}(p_k)=\left(\frac{x_k-\a}{r_k},\frac{t_k-\tau}{r_k^2}\right)\\
&=\left(0,\frac{t_k-t_k'}{r_k^2}\right)+\left(\frac{x_k-\a}{r_k},\frac{t_k'-\tau_k}{r_k^2}\right)+\left(0,\frac{\tau_k-\tau}{r_k^2.}\right)\end{align*}
The second coordinates of the first and third term in the sum tend to zero when $k\to\infty$. The middle term is
$$\left(\frac{x_k-\a}{r_k},\frac{t_k'-\tau_k}{r_k^2}\right)=\left(\frac{x_k-\a}{r_k},\frac{x_k-\a}{r_k}\right).$$
Suppose, for example, that there are infinitely many $k_1,k_2,\dots,$ such that $\a$ belongs to the left half of $I_{k_i,i(k_i)}$. Then $(x_{k_i}-\a)/r_{k_i}$ ranges between $0$ and $1/2$ when $x_{k_i}$ ranges between $\a$ and $\a+r_{k_i}/2$. It follows that $T_{a,r_{k_i}}(E)$ converges to a closed set which contains the segment $\{(y,y):0 \leq y\leq 1/2\}$. Using \eqref{ex1eq} this implies that the horizontal line cannot be an approximate tangent line of $E$ at $a$. Neither can the vertical line be, but this is not relevant here, since $E$ is 1-dimensional. But for higher dimensional modifications of the construction this should be taken into account.

Finally, to show that $E$ is Euclidean 1-rectifiable, connect first the end-points of the segments $J_{1,i}$ to each other by vertical segments of length $r_1^2$ and then for each $k\geq 2$ connect the end-points of the segments $J_{k,i}$ to each other by vertical segments of length $r_kr_{k-1}-r_{k}^2$. This gives curves $\Gamma_k$ of length at most 3. The limit of them is a rectifiable curve containing $E$. This last argument does not work for higher dimensional examples, but one can also easily show that the horizontal line is a Euclidean approximate tangent line and this can be generalized.

For general $0<m\leq n$, let $E\subset\p^1$ be as above. Then $\{0\}^{n-m}\times [0,1]^{m-1}\times E$ works. I leave the routine checking to the reader.
\end{proof}

I only do the following example in $\p^1$, but similar examples can be given in higher dimensions.

\begin{ex}\label{ex3}
There is a compact vertically parabolic $2$-rectifiable set $E\subset\p^1$ with $0<\H^{2}(E)<\infty$ and $0<\H_E^1(E)<\infty$ which is not Euclidean  $1$-rectifiable. \end{ex}

\begin{proof}
Let $r_k>0, k=1,2,\dots,$ be such that $r_1=1, \lim_{k\to\infty} r_k=0$ and $r_k=n_kr_{k+1}$ for some strictly increasing sequence of positive even integers $n_k$. Let $N_0=1$ and $N_k=n_1\cdot\cdot\cdot n_k$ for $k\geq 1$. Then $r_{k+1}=1/N_k$. We construct squares $Q_{k,i}, i=1,\dots,N_{k-1},$ with side-length $r_k$ and rectangles $R_{k,i}, i=1,\dots,2N_{k-1},$ with side-lengths $r_{k+1}$ and $r_k/2$: starting with $Q_{1,1}=[0,1]\times [0,1], R_{1,1}=[0,r_2]\times [0,1/2], R_{1,2}=[1-r_2,1]\times [1/2,1]$. We define recursively;
\begin{align*}
&\text{if}\ Q_{k,i}=[a_{k,i},a_{k,i}+r_k]\times [b_{k,i},b_{k,i}+r_k],\text{then}\\\ 
&R_{k,2i-1}=[a_{k,i},a_{k,i}+r_{k+1}]\times [b_{k,i},b_{k,i}+r_{k}/2], \\
&R_{k,2i}=[a_{k,i}+r_k-r_{k+1},a_{k,i}+r_k]\times [b_{k,i}+r_{k}/2,b_{k,i}+r_{k}].\\
\end{align*}
For $k>1$ the squares $Q_{k,i}$ are obtained by partitioning each $R_{k-1,i}$ into $n_{k-1}/2$ squares of side-length $r_k$.  Define  
$$E=\bigcap_{k=1}^{\infty}\bigcup_{i=1}^{N_{k-1}}Q_{k,i}=\bigcap_{k=1}^{\infty}\bigcup_{i=1}^{2N_{k-1}}R_{k,i}.$$

If $a\in R_{k,i}$, there is $j\neq i$ such that $R_{k,i}\cup R_{k,j}\subset B_E(a,2r_{k})$. From this one easily sees that $E$ cannot have Euclidean approximate tangent lines at any of its points, so it is Euclidean purely 1-unrectifiable.

The projection of $E$ on the vertical axis is $[0,1]$ which implies that both $\H^{2}(E)$ and $\H_E^1(E)$ are positive. For every $k$ we have $\sum_id_E(Q_{k,i})=\sqrt{2}$, from which by Lemma \ref{Hs} we get that these measures are finite, too.

Let $\pi:\p^1\to \R$ be the projection $\pi(x,t)=t$. Then 

\begin{equation}\label{F}
\H^2(\pi(F))=\H^2(F)\ \text{for every compact set}\ F\subset\ E.
\end{equation}
The inequality $\H^2(\pi(F))\leq \H^2(F)$ holds, since $\pi$ is 1-Lipschitz. To check the opposite inequality, observe first that $d(R_{k,i})=t_k\sqrt{r_k/2}$, where $\lim_{k\to\infty}t_k=1$. The intervals $\pi(R_{k,i}), i=1,\dots,2N_{k-1},$ cover $\pi(E)$ and they have disjoint interiors, so if $\e>0$, then for large enough $k$ we can cover $F$ with $M_k$ rectangles $R_{k,i}, i\in I_k,$ such that 
$\H^2(\pi(F))+\e\geq M_{k}r_k/2$. Then
$$\H^2(F)\leq \liminf_{k\to\infty}\sum_{i\in I_k}d(R_{k,i})^2 \leq \liminf_{k\to\infty}M_{k}t_k^2r_k/2\leq \liminf_{k\to\infty}t_k^2(\H^2(\pi(F))+\e),$$
from which $\H^2(F)\leq \H^2(\pi(F))$ follows.

Define rectangles $R_{k,i}'\subset R_{k,i}$ so that if $R_{k,i}=I\times [b,b+r_k/2]$, then $R_{k,i}'=I\times [b+r_k/2-r_k^{3/2},r_k/2]$. 
 The sets $\pi(E\cap R_{k,i})$ and $\pi(E\cap R_{k,i}')$ are intervals of lengths $r_k/2$ and $r_k^{3/2}$, whence by \eqref{F} $\H^2(E\cap R_{k,i}')=2\sqrt{r_k}\H^2(\pi(E\cap R_{k,i}))$. Set
$$E'=\bigcap_{l=1}^{\infty}\bigcup_{k=l}^{\infty}\bigcup_{i=1}^{N_k}E\cap R_{k,i}'.$$
We have for every $l$,
$$\H^2(E')\leq \sum_{k=l}^{\infty}\sum_{i=1}^{N_k}\H^2(E\cap  R_{k,i}')=\sum_{k=l}^{\infty}\sum_{i=1}^{N_k}2\sqrt{r_k}\H^2(E\cap  R_{k,i})
=\sum_{k=l}^{\infty}2\sqrt{r_k}\H^2(E).$$
As $\sum_k \sqrt{r_k}<\infty$, we obtain  $\H^2(E')=0$.

Let $a\in E\setminus E'$. Then there is $l$ such that for every $k\geq l$, $a\in R_{k,i_k}'$ for some $i_k$. Let $0<r<1, 0<\delta<1$ and let $k$ be such that $r_{k+1}/\delta < r \leq r_{k}/\delta$. For large enough $k$, $r^2\leq (r_k/\delta)^2<r_k^{3/2}$. Then $B(a,r)$ does not meet any other $R_{k,i}$ except $R_{k,i_k}$. Let $T$ be the vertical axis. As $r_{k+1}<\delta r$, it follows that
$$E\cap B(a,r)\cap \{p:d(p-a,T)\geq\delta r\})=\emptyset.$$
Recalling Lemma \ref{apprtandist} $T$ is the approximate tangent line of $E$ at $\H^2$ almost all points of $E$ and thus $E$ is vertically parabolic  $2$-rectifiable.
\end{proof}

By Remark \ref{graphrem}(2) horizontal Lipschitz graphs project to measure zero on the $t$-axis. But the projection can be uncountable:

\begin{ex}\label{ex2} 
For $0<m\leq n$ there is a compact horizontally parabolic  $m$-rectifiable $m$-Lipschitz graph $G\subset\Pn$ with $\H^m(G)>0$ such that  $P(G)$ is an uncountable Cantor set, where $P(x,t)=t.$
\end{ex}

\begin{proof} 
Let $F'\subset [0,1]$ be a compact Cantor set with $\mathcal L^1(F')>0$ defined as
$$F'=\bigcap_{k=1}^{\infty}\bigcup_{i=1}^{2^k}I_{k.i}',$$
where inside each $I_{k,i}'$ two intervals $I_{k+1,j_1}'$ and $I_{k+1,j_2}'$ are selected by deleting from the middle of $I_{k,i}'$ an interval of length $r_k'$.  
Associate to these the intervals $I_{k,i}$ such that $d(I_{k,i})=d(I_{k,i}')^4$ and the gaps $r_k$ at the $k$ level satisfy $r_k=r_k'^4$. Let
$$F=\bigcap_{k=1}^{\infty}\bigcup_{i=1}^{2^k}I_{k,i}$$ 
and define $f:F'\to F$ such that $f(x)\in I_{k,i_k}$ if $x\in I'_{k,i_k}$. Then $|f(x)-f(y)|\lesssim |x-y|^4$ for $x,y\in F'$. Define $g:F'\to\{0\}\times\R\subset\p^1$ by $g(x)=(0,f(x))$. Then 
$$\lim_{r\to 0}\sup\{\frac{\|g(x)-g(y)\|}{|x-y|}:|x-y|<r, x,y\in F'\}=0.$$
It follows that $P(G_g)=F$ and for every $L>0$ $G_g$ can be expressed as the union of finitely many horizontal $(1,L)$-Lipschitz graphs. Consequently $G_g$ has the required properties when $m=1$.

For $m>1$ we can again take $\{0\}^{n-m}\times [0,1]^{m-1}\times G$.
\end{proof}

\vspace{1cm}
\begin{footnotesize}
{\sc Department of Mathematics and Statistics,
P.O. Box 68,  FI-00014 University of Helsinki, Finland,}\\
\emph{E-mail address:} 
\verb"pertti.mattila@helsinki.fi" 

\end{footnotesize}

\end{document}